\documentclass[11pt]{article}

\usepackage{amsmath,amssymb,amsthm,mathtools}
\usepackage{fullpage}
\usepackage{enumitem}
\usepackage{microtype}
\usepackage{cite} 
\usepackage{hyperref}

\newcommand{\orcid}[1]{\href{https://orcid.org/#1}{\texttt{#1}}}

\makeatletter
\renewcommand\@biblabel[1]{#1.}
\makeatother

\newcommand{\R}{\mathbb{R}}
\newcommand{\E}{\mathbb{E}}
\newcommand{\Pbb}{\mathbb{P}}
\newcommand{\Hb}{\mathbb{H}}
\newcommand{\Vb}{\mathbb{V}}
\newcommand{\1}{\mathbf{1}}
\newcommand{\law}{\mathcal{L}}
\newcommand{\dd}{\mathrm{d}}
\newcommand{\dx}{\,\dd x}
\newcommand{\dt}{\,\dd t}

\newcommand{\supp}{\operatorname{supp}}

\newcommand{\Xstar}{X^{\ast}}
\newcommand{\Xstarn}{X^{(\ast,n)}}

\theoremstyle{plain}
\newtheorem{theorem}{Theorem}[section]
\newtheorem{proposition}[theorem]{Proposition}
\newtheorem{lemma}[theorem]{Lemma}
\newtheorem{corollary}[theorem]{Corollary}

\theoremstyle{definition}
\newtheorem{assumption}[theorem]{Assumption}

\theoremstyle{remark}
\newtheorem{remark}[theorem]{Remark}

\title{Existence of densities and atoms for the running maximum of time-inhomogeneous jump diffusions}
\author{Takuya Nakagawa\thanks{Department of Mathematical Sciences,
Ritsumeikan University, 1-1-1 Noji-Higashi, Kusatsu, Shiga 525-8577, Japan.
Email: \texttt{takuya.nakagawa73@gmail.com}.
ORCID: \orcid{0000-0003-1216-4822}.}
\and
Ryoichi Suzuki\thanks{Department of Business Economics, School of Management,
Tokyo University of Science, 1-11-2 Fujimi, Chiyoda-ku, Tokyo 102-0071, Japan.
Email: \texttt{rsuzukimath@gmail.com}.
ORCID: \orcid{0000-0001-9979-1882}.
Corresponding author.}
}
\date{}

\begin{document}
\maketitle

\begin{abstract}
We prove absolute continuity of the running maximum $\Xstar_T=\sup_{0\le s\le T}X_s$ of one-dimensional time-inhomogeneous L\'evy--It\^o diffusions driven by a Brownian motion and an independent non-truncated pure-jump L\'evy process.
Using Bismut's directional Malliavin calculus on the Wiener--Poisson space together with the running-maximum criteria of Song--Xie and Nakagawa--Suzuki, we reduce the problem to constructing an admissible direction $\Theta$.
The key is to ensure that the directional derivative $D_\Theta X_t$ is strictly positive for all $t\in(0,T]$.
We give explicit directions in two regimes.
In the uniformly elliptic case with time-inhomogeneous coefficients, a purely Brownian perturbation yields an explicit positive integral representation for $D_\Theta X_t$, and hence $\Xstar_T$ admits a density without truncating the jump component.
In a Brownian-degenerate pure-jump model with a bounded deterministic time-dependent jump weight $\kappa(t)$ that may vanish on subintervals, we prove absolute continuity under the minimal nondegeneracy-in-time condition $\int_0^t \kappa(s)^2\,ds>0$ for every $t>0$ and infinite activity of the L\'evy measure.
A weighted Poisson positivity lemma is the key new input.
Finally, we show that silent initial intervals can create atoms and we derive an explicit atom--density decomposition.
\end{abstract}
\medskip
\noindent\textbf{2020 Mathematics Subject Classification.} Primary 60H07, 60H10; Secondary 60G51, 60G52, 60G70.

\noindent\textbf{Keywords.} Running maximum; Malliavin calculus; L\'evy--It\^o diffusion; Pure-jump L\'evy process; Absolute continuity; Atom--density decomposition.

\section{Introduction}

Let $X=(X_t)_{t\in[0,T]}$ be a one-dimensional jump diffusion and denote its running maximum by
\[
\Xstar_T := \sup_{0\le s\le T} X_s.
\]
Running maxima are central objects in fluctuation theory and appear in applications through extrema-based risk measures and
path-dependent payoffs (barrier and lookback functionals); see, e.g., \cite{Bertoin1996,ContTankov2004,Kyprianou2014}.
For L\'evy processes, the law of the supremum can often be analyzed via Wiener--Hopf factorization.
Beyond explicit factorization, the absolute continuity of the supremum (and related joint laws) has been studied in depth; see Chaumont \cite{Chaumont2013} and, for PDE characterizations and density representations, Coutin, Pontier and Ngom \cite{CoutinPontierNgom2018}.
For solutions of SDEs driven by L\'evy noise, such explicit fluctuation identities are typically unavailable,
and it becomes natural to study qualitative regularity properties of $\law(\Xstar_T)$, in particular the existence of a density.

We focus on time-inhomogeneous L\'evy--It\^o diffusions solving
\begin{equation}\label{eq:SDE-general}
\dd X_t=b(t,X_t)\dt+\sigma(t,X_t)\,\dd W_t+\kappa(t)\,\dd L_t,\qquad X_0=x\in\R.
\end{equation}
Here $W$ is a Brownian motion, and $L$ is an independent, non-truncated, pure-jump L\'evy process with L\'evy measure $\nu$.
The jump amplitude $\kappa:[0,T]\to\R$ is a bounded deterministic measurable function; in the Brownian-degenerate regime it may vanish on subintervals.
Precise assumptions on the coefficients $b$, $\sigma$, $\kappa$ and on $\nu$ are stated in Sections~\ref{sec:setting} and~\ref{sec:main}. Depending on the source of nondegeneracy, we treat separately a uniformly elliptic Brownian regime and a Brownian-degenerate regime in which the jump term (weighted by the time-dependent coefficient $\kappa$) provides the relevant nondegeneracy.
In the Brownian-degenerate regime, the minimal nondegeneracy-in-time condition reads $\int_0^t\kappa(s)^2\,\dd s>0$ for every $t>0$.

The supremum functional $(\xi_s)_{s\in[0,T]}\mapsto \sup_{s\le T}\xi_s$ is Lipschitz (on the space of bounded paths) but highly nonsmooth.
Therefore density questions for $\Xstar_T$ are not direct consequences of the classical Malliavin calculus for smooth functionals.
A stochastic calculus of variations for jump processes was initiated by Bismut \cite{Bismut1983} and further developed in several directions;
see \cite{Bichteler1987,IshikawaKunita2006,Ishikawa2023,Nualart2006}.
These tools yield smooth densities for $X_T$ under suitable nondegeneracy conditions (e.g.\ \cite{BallyClement2011,Cass2009,Picard1996}),
but the running maximum requires additional ideas because it depends on the whole path and is not a smooth functional.
Related regularity results for densities of SDEs driven by (possibly degenerate) additive L\'evy noises, based on Bismut-type Malliavin calculus, can be found in Song and Zhang \cite{SongZhang2015}.

Even for continuous diffusions, regularity of laws involving the running maximum has been investigated using Malliavin calculus
and integration-by-parts formulas; see, e.g., Hayashi and Kohatsu-Higa \cite{HayashiKohatsuHiga2013},
Nakatsu \cite{Nakatsu2016,Nakatsu2019}, and Coutin and Pontier \cite{CoutinPontier2019,CoutinPontier2023}.

Song and Xie \cite{SongXie2018} provided a Bismut--Malliavin criterion for the existence of a density for the running maximum of Wiener--Poisson functionals
and applied it to L\'evy--It\^o diffusions driven by a Brownian motion and a \emph{truncated} symmetric stable process.
Passing from truncated to non-truncated (infinite-activity) pure-jump L\'evy noise is delicate: truncation produces only weak convergence,
and products of weakly convergent sequences may fail to converge.
Nakagawa and Suzuki \cite{NakagawaSuzuki2024} addressed this difficulty and established absolute continuity of $\Xstar_T$ for the additive-coefficient model~\eqref{eq:NSmodel}
\begin{equation}\label{eq:NSmodel}
\dd X_t=b(X_t)\dt+\sigma_1\,\dd W_t+\sigma_2\,\dd L_t,
\end{equation}
with constant $\sigma_1,\sigma_2$ and a non-truncated pure-jump L\'evy process $L$ under conditions that cover, in particular, symmetric $\alpha$-stable processes with $\alpha\in(1,2)$.
A key input is the running-maximum criterion \cite[Lemma~4.1]{NakagawaSuzuki2024}, based on weak compactness of the directional Sobolev space $W^{1,p}_\Theta$
for Wiener--Poisson functionals.

\medskip
The present paper takes a complementary viewpoint.
We take the running-maximum criterion of \cite{NakagawaSuzuki2024} as an input and focus on constructing, directly for the \emph{original} model,
an admissible direction $\Theta$ such that the directional derivative $D_\Theta X_t$ is strictly positive for all $t\in(0,T]$.
This yields a unified scheme in which we do not differentiate the nonsmooth supremum functional; see Proposition~\ref{prop:unified}.
We provide two explicit nondegeneracy mechanisms.
In a time-inhomogeneous jump diffusion with a uniformly elliptic, state-dependent diffusion coefficient,
a Brownian direction alone makes $D_\Theta X_t$ an explicit positive Lebesgue integral, which in particular avoids any truncation of the jump component.
In a Brownian-degenerate model we allow a time-dependent jump weight that may vanish on subintervals;
under a minimal nondegeneracy-in-time condition, a weighted Poisson positivity lemma forces $D_\Theta X_t>0$ despite the possible vanishing of the weight.
Finally, we show that silent initial intervals may create atoms and we derive an explicit atom--density decomposition,
demonstrating that the nondegeneracy-in-time condition is essentially sharp.

Technically, our approach separates the argument into an \emph{analytic} part (moment bounds and membership in $W^{1,p}_\Theta$)
and a \emph{geometric} part (explicit construction of a strictly positive direction).
This modularity facilitates the treatment of time-inhomogeneous coefficients and accommodates intermittency arising from vanishing noise amplitudes.

\medskip
\noindent\textbf{Relation to previous work.}
Compared with Song and Xie \cite{SongXie2018} and Nakagawa and Suzuki \cite{NakagawaSuzuki2024}, our results add three features:
\begin{itemize}[leftmargin=2.2em]
\item[(R1)] We work throughout with the \emph{non-truncated} pure-jump L\'evy noise.
In the elliptic regime, strict positivity is obtained by a purely Brownian direction, hence avoiding any truncation step as in \cite{SongXie2018}.
\item[(R2)] Beyond the additive constant-coefficient setting of \cite{NakagawaSuzuki2024}, we allow time-inhomogeneous coefficients and state dependence of the elliptic diffusion coefficient, and we permit a time-dependent jump amplitude that may vanish on subintervals.
\item[(R3)] We complement density results by identifying a sharp obstruction:
if both noises are silent on a nontrivial initial interval, then $\law(\Xstar_T)$ may contain an atom at the deterministic initial maximum, and we give an explicit atom--density decomposition.
\end{itemize}

\medskip
\noindent\textbf{Contributions.}
Fix $T>0$.
\begin{itemize}[leftmargin=2.2em]
\item[(C1)] (\emph{Uniformly elliptic Brownian part, time-inhomogeneous coefficients.})
We consider the time-inhomogeneous jump diffusion \eqref{eq:SDE-general} with a uniformly elliptic diffusion coefficient $\sigma(t,x)$.
In this regime we construct an admissible direction $\Theta=(h,0)$ using \emph{only} the Brownian component such that $D_\Theta X_t>0$ for all $t>0$,
and hence $\Xstar_T$ admits a density.
Compared with \cite{NakagawaSuzuki2024}, this allows time dependence and state dependence in the elliptic coefficient and, in particular,
does not require any jump-direction nondegeneracy or truncation of the pure-jump part.
\item[(C2)] (\emph{Pure-jump model with a time-dependent jump weight.})
In the Brownian-degenerate case $\sigma\equiv 0$, we allow the jump weight $\kappa(t)$ to be time-dependent and to vanish on subintervals.
Under the minimal nondegeneracy-in-time condition \eqref{eq:NDtime} and infinite activity of the L\'evy measure,
we construct a jump direction $\Theta=(0,v)$ and prove that $\Xstar_T$ still admits a density.
The new ingredient is a weighted Poisson positivity lemma which ensures $D_\Theta X_t>0$ for all $t>0$ despite possible vanishing of $\kappa$.
\item[(C3)] (\emph{Sharpness via silent intervals.})
We show that if both noises are silent on a nontrivial initial interval, then $\law(\Xstar_T)$ may contain an atom at the deterministic initial maximum.
When the post-silent dynamics satisfies the assumptions of Theorem~\ref{thm:elliptic} or Theorem~\ref{thm:jump}, we obtain an explicit atom--density decomposition on $(m_0,\infty)$,
showing that \eqref{eq:NDtime} (and its elliptic analogue) is essentially sharp.
\end{itemize}

\medskip
\noindent\textbf{Organization.}
Section~\ref{sec:setting} introduces the L\'evy setting and assumptions.
Section~\ref{sec:malliavin} recalls the directional Malliavin calculus and states the running-maximum criterion (Lemma~\ref{lem:criterion}) and the unified sufficient condition (Proposition~\ref{prop:unified}),
which serve as the common engine of the paper.
Section~\ref{sec:main} states the main results. Proofs are given in Sections~\ref{sec:proof-elliptic}--\ref{sec:proof-silent}.
Technical lemmas are collected in Appendix~\ref{sec:appendix}.

\section{L\'evy setting and model}\label{sec:setting}

Let $(\Omega,\mathcal{F},(\mathcal{F}_t)_{t\in[0,T]},\Pbb)$ support a standard Brownian motion $W$
and an independent pure-jump L\'evy process $L$ with L\'evy measure $\nu$.
We write $\R_0:=\R\setminus\{0\}$.
We write $N(\dt,\dd z)$ for the Poisson random measure of $L$ and $\tilde N(\dt,\dd z)=N(\dt,\dd z)-\nu(\dd z)\dt$.
Throughout we take $L$ to have characteristic triplet $(0,0,\nu)$ with respect to the truncation function
$z\1_{\{|z|\le 1\}}$; equivalently, $L$ admits the L\'evy--It\^o decomposition
\begin{equation}\label{eq:LevyIto}
L_t=\int_0^t\int_{|z|\le 1} z\,\tilde N(\dd s,\dd z)+\int_0^t\int_{|z|>1} z\,N(\dd s,\dd z),\qquad t\in[0,T].
\end{equation}
(Any deterministic drift term of $L$ could be absorbed into the drift coefficient $b$.)

\subsection{Assumptions on the L\'evy measure}

We use the class of non-truncated L\'evy measures considered by \cite{NakagawaSuzuki2024}; see Assumption~\ref{ass:Levy} below.

\begin{assumption}[L\'evy measure conditions]\label{ass:Levy}
Assume that $\nu(\dd z)=c(z)\,\dd z$ for a positive function $c\in C^1(\R_0)$ and that there exist constants $\beta>1$ and $C>0$
such that:
\begin{enumerate}[label=(L\arabic*),leftmargin=2.7em]
\item $\displaystyle \int_{\R_0} (1\wedge |z|^2)\,\nu(\dd z)<\infty$;
\item for any $p\in(1,\beta)$, $\displaystyle \lim_{n\to\infty}\int_{|z|>n}|z|^{p}\,\nu(\dd z)=0$;
\item $\displaystyle \sup_{n\in\mathbb{N}}\left|\int_{1\le |z|\le n} z\,\nu(\dd z)\right|<\infty$ and
$\nu(\{0<|z|\le \varepsilon\})=+\infty$ for every $\varepsilon>0$;
\item $\displaystyle \left|\frac{c'(z)}{c(z)}\right|\le C\left(1\vee \frac1{|z|}\right)$ for all $z\ne 0$.
\end{enumerate}
We refer to \cite{Sato2013} for background on L\'evy measures and infinite activity.
Condition (L1) is standard and ensures that the L\'evy--It\^o decomposition~\eqref{eq:LevyIto}
defines a semimartingale; the first part of~(L3) (bounded compensated large-jump integrals) guarantees
that \eqref{eq:LevyIto} converges without an additional drift correction
(see \cite[Theorem~19.2]{Sato2013}).
This is essentially a normalization allowing us to work with the characteristic triplet $(0,0,\nu)$
for the truncation function $z\1_{\{|z|\le 1\}}$; any resulting deterministic drift term can be absorbed into $b$ as noted above.
The infinite-activity part of~(L3) is used
in Lemma~\ref{lem:poisson-positive} ($\nu(\{z:\eta(z)>0\})=\infty$).
Conditions~(L2) and~(L4) are needed for
the Sobolev-space estimates in \cite{NakagawaSuzuki2024}.
\end{assumption}

\begin{remark}[Comparison with the approximation strategy of {\cite{NakagawaSuzuki2024}}]
The proof strategy in \cite{NakagawaSuzuki2024} relies on approximating the non-truncated L\'evy process by truncated processes $L^{(n)}$
and transferring absolute continuity from the finite-jump regime via weak compactness in $W^{1,p}_\Theta$.
In that approximation scheme, constant diffusion and jump coefficients facilitate an explicit computation of $D_\Theta X^{(n)}$
and the uniform estimates needed to pass to the limit.
In contrast, in our uniformly elliptic regime we bypass the truncation argument entirely by working in a Brownian direction and taking the
constant sequence $X^{(n)}\equiv X$; in the pure-jump regime, our main new ingredient is a weighted-Poisson positivity lemma which accommodates
time-dependent weights $\kappa(t)$ that may vanish on subintervals.
\end{remark}

\begin{remark}[Stable example]
If $L$ is a symmetric $\alpha$-stable process with $\alpha\in(1,2)$, then
$c(z)=c_\alpha |z|^{-1-\alpha}$ and Assumption~\ref{ass:Levy} holds with $\beta=\alpha$.
In particular, the first part of~(L3) holds trivially by symmetry, since $\int_{1\le|z|\le n}z\,\nu(\dd z)=0$ for all $n$.
\end{remark}

\subsection{The SDE and well-posedness}

We consider the one-dimensional SDE~\eqref{eq:SDE-general}, that is,
\begin{equation*}
\dd X_t=b(t,X_t)\dt+\sigma(t,X_t)\,\dd W_t+\kappa(t)\,\dd L_t,\qquad X_0=x\in\R.
\end{equation*}
Since $X$ is c\`adl\`ag, one may equivalently evaluate the drift and diffusion coefficients at the left limit $X_{t-}$ in \eqref{eq:SDE-general}.
This leaves the $dt$- and $dW$-integrals unchanged, since the set of jump times is Lebesgue-null; moreover, the jump term is additive with deterministic $\kappa$.
We keep the lighter notation $b(t,X_t)$ and $\sigma(t,X_t)$ throughout.
For $t\in[0,T]$, define $\Xstar_t:=\sup_{0\le s\le t}X_s$.
Precise conditions on the coefficients $b$, $\sigma$ and $\kappa$ are stated separately
for each regime in Section~\ref{sec:main}:
Assumption~\ref{ass:elliptic} for the uniformly elliptic Brownian regime and
Assumption~\ref{ass:jump} for the pure-jump regime.

Under the coefficient assumptions imposed in Section~\ref{sec:main}, the SDE~\eqref{eq:SDE-general} admits a unique strong solution; see, for instance, \cite{Applebaum2009}. Throughout the paper we work with this solution and do not discuss existence and uniqueness further.

\section{Directional Malliavin calculus and a running-maximum criterion}\label{sec:malliavin}

We adopt the directional Malliavin calculus on the Wiener--Poisson space in the spirit of Bismut
and use the framework and notation of \cite{NakagawaSuzuki2024};
see also \cite{BouleauDenis2015} for a comprehensive account of Dirichlet form methods for Poisson measures.
For background on (non-directional) Malliavin calculus on the Wiener--Poisson space and integration-by-parts formulas with jumps, see \cite{BallyClement2011,IshikawaKunita2006,Ishikawa2023}.
In particular, for $p>1$ and $\Theta=(h,v)$ belonging to the admissible spaces
$\Hb_p\times \Vb_p$, the directional derivative $D_\Theta$ and the Sobolev space $W^{1,p}_\Theta$
are defined as in \cite[Definition~3.1]{NakagawaSuzuki2024}.
Informally, $W^{1,p}_\Theta$ consists of random variables (or processes) that possess a directional
derivative $D_\Theta F$ in $L^p$ along the perturbation $\Theta=(h,v)$,
where $h$ perturbs the Brownian path and $v$ perturbs the Poisson jump sizes.

\medskip
\noindent\textbf{Minimal definition (directional derivative).}
Let $\mathcal{S}$ denote the class of smooth Wiener--Poisson cylinder functionals used in \cite[Definition~3.1]{NakagawaSuzuki2024}.
For $F\in\mathcal{S}$, the directional derivative $D_\Theta F$ is defined as the derivative at $\varepsilon=0$
along the Bismut-type perturbation associated with $\Theta=(h,v)$: the Brownian path is shifted in the Cameron--Martin direction $h$
and the jump sizes are transported according to the predictable field $v$.
The Sobolev space $W^{1,p}_\Theta$ is the completion of $\mathcal{S}$ under the norm $\|F\|_{L^p}+\|D_\Theta F\|_{L^p}$.
In the one-dimensional jump direction used later, an explicit expression of the $\Vb_p$-norm and of the weight $\varrho$
appearing in the definition of $\Vb_p$ is recalled in Lemma~\ref{lem:v-admissible}.

For the reader's convenience, we recall that \cite{NakagawaSuzuki2024} allows \emph{random} adapted directions.
For $p>1$, let $\Hb_p$ be the space of all measurable $(\mathcal{F}_t)$-adapted processes $h:\Omega\times[0,T]\to\R$ such that
\[
\|h\|_{\Hb_p}:=\Big(\E\Big[\Big(\int_0^T |h(s)|^2\,\dd s\Big)^{p/2}\Big]\Big)^{1/p}<\infty,
\]
and set $\Hb_{\infty-}:=\bigcap_{p\ge1}\Hb_p$.
Likewise, $\Vb_p$ denotes the space of predictable fields $v:\Omega\times[0,T]\times\R_0\to\R$
equipped with the norm $\|\cdot\|_{\Vb_p}$ in \cite[Section~3]{NakagawaSuzuki2024}, and $\Vb_{\infty-}:=\bigcap_{p\ge1}\Vb_p$.
In particular, directions $(h,v)\in \Hb_p\times \Vb_p$ may depend on $\omega$ (adapted/predictable), whereas the core class of cylinder
functionals used to define $D_\Theta$ is generated by deterministic test functions.

We will rely on the following criterion for absolute continuity of the running maximum.

\begin{lemma}[Criterion for $\Xstar_T$ {\cite[Lemma~4.1]{NakagawaSuzuki2024}}]\label{lem:criterion}
Fix $p>1$ and $\Theta=(h,v)\in \Hb_{\infty-}\times \Vb_{\infty-}$.
For each $n\ge1$, let $\{X^{(n)}_t\}_{t\in[0,T]}$ be a right-continuous real-valued process, and let $\{X_t\}_{t\in[0,T]}$ be a right-continuous real-valued process.
Write
\[
\Xstarn_T:=\sup_{0\le s\le T}X^{(n)}_s,
\qquad
\Xstar_T:=\sup_{0\le s\le T}X_s.
\]

Assume that:
\begin{enumerate}[label=\textup{(\roman*)},leftmargin=2.4em]
\item $\sup_{n\in\mathbb{N}}\E[|\Xstarn_T|^p]<\infty$, and for every $t\in[0,T]$ we have $X^{(n)}_t\in W^{1,p}_\Theta$ with
\[
\sup_{n\in\mathbb{N}}\E\Big[\sup_{0\le s\le T}|D_\Theta X^{(n)}_s|^p\Big]<\infty;
\]
\item for each $n\ge1$, the process $\{D_\Theta X^{(n)}_t\}_{t\in[0,T]}$ admits a right-continuous version;
\item $\displaystyle \lim_{n\to\infty}\E\Big[\sup_{0\le s\le T}|X^{(n)}_s-X_s|^p\Big]=0$.
\end{enumerate}
Assume moreover that $D_\Theta X$ admits a right-continuous version and satisfies
\[
\Pbb\big(D_\Theta X_t\neq 0 \text{ on }\{t\in(0,T]: X_t=\Xstar_T\}\big)=1.
\]
Then the law of $\Xstar_T$ is absolutely continuous with respect to the Lebesgue measure on $\R$.
\end{lemma}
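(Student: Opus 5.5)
The plan is the Bouleau--Hirsch / Nualart route for maxima, adapted to directional derivatives. Since absolute continuity of $\law(\Xstar_T)$ follows from $\E[\varphi(\Xstar_T)]=0$ for every bounded nonnegative measurable $\varphi$ supported on a Lebesgue-null set, we fix such a null set $A$ and aim to show $\Pbb(\Xstar_T\in A)=0$.

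\textbf{Step 1: closedness and a limit derivative.} From (i) and (iii), $\Xstarn_T\to\Xstar_T$ in $L^p$, because $|\sup_s X^{(n)}_s-\sup_s X_s|\le\sup_s|X^{(n)}_s-X_s|$. For each fixed $n$, $\Xstarn_T$ belongs to $W^{1,p}_\Theta$ with $D_\Theta\Xstarn_T=D_\Theta X^{(n)}_{\tau_n}$ for some random time $\tau_n$ at which the maximum is attained. I would get this by the standard approximation over finite dyadic grids. Maxima of finitely many elements of $W^{1,p}_\Theta$ lie in $W^{1,p}_\Theta$ via the Lipschitz chain rule. Passing to the supremum over the grid uses right-continuity of $X^{(n)}$ and of $D_\Theta X^{(n)}$, which is (ii), together with the uniform bound in (i). That bound gives $\sup_n\|D_\Theta\Xstarn_T\|_{L^p}<\infty$. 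By reflexivity and closedness of $D_\Theta$ (weak compactness in $W^{1,p}_\Theta$), we obtain $\Xstar_T\in W^{1,p}_\Theta$. Along a subsequence, $D_\Theta\Xstarn_T\rightharpoonup G$ weakly, where $G=D_\Theta\Xstar_T$.

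\textbf{Step 2: identify $G$ on the argmax set.} I would show that a.s. $G$ lies in the closed convex hull of $\{D_\Theta X_t: X_t=\Xstar_T\}$, with left limits included. On the event where $D_\Theta X_t\neq0$ on the argmax set, right-continuity together with connectedness arguments on each side should force all these values to have a common sign, or at least give $G\neq0$. This is the main obstacle. The difficulty is that weak convergence does not pass through products, and the argmax times $\tau_n$ need not converge to a single time. I would first localize: for $\varepsilon>0$, let $K_\varepsilon=\{t:X_t\ge\Xstar_T-\varepsilon\}$, and use (iii) to show that eventually $\tau_n\in K_\varepsilon$. Hence $D_\Theta\Xstarn_T$ lies in the range of $D_\Theta X^{(n)}$ on $K_\varepsilon$. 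Then I need to transfer this to $D_\Theta X$. Since only the limit $X$ (not $D_\Theta X^{(n)}$) is controlled, I would use Mazur's lemma to replace weak limits by strong limits of convex combinations. I would then argue via a sign-restriction: replace $\Xstar_T$ by $\Xstar_T\vee$ truncations so that $G\ge c>0$ or $G\le -c<0$ on localized events.

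\textbf{Step 3: conclude.} With $\Xstar_T\in W^{1,p}_\Theta$ and $D_\Theta\Xstar_T\neq0$ a.s., apply the directional integration-by-parts / occupation-density argument of Bouleau--Hirsch type. Set $\psi(x)=\int_{-\infty}^x\mathbf 1_A(y)\,dy\equiv0$. The chain rule gives $0=D_\Theta\psi(\Xstar_T)=\mathbf 1_A(\Xstar_T)D_\Theta\Xstar_T$, so $\Pbb(\Xstar_T\in A)=0$. Justifying this chain rule for Lipschitz $\psi$ in $W^{1,p}_\Theta$ is the remaining technical point. I would take it from the framework of \cite[Definition~3.1]{NakagawaSuzuki2024}.
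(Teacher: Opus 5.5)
The paper does not prove this lemma; it imports it from \cite[Lemma~4.1]{NakagawaSuzuki2024}. Your argument therefore has to stand on its own, and everything rests on your Step~2. That step lists tools (convex hulls, Mazur, a ``common sign'' from connectedness, truncations) but gives no mechanism that yields $D_\Theta\Xstar_T\neq0$. In fact no such mechanism exists under the hypotheses as stated here. Take $\Theta=(h,0)$ with $h\equiv1$, $\xi:=W_T/T$ (so $D_\Theta\xi=1$), $X^{(n)}\equiv X$ and $X_t:=-|t-\xi|$. Then $X_t\in W^{1,p}_\Theta$ with $D_\Theta X_t=\mathrm{sgn}(t-\xi)$. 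Its right-continuous version $\1_{\{t\ge\xi\}}-\1_{\{t<\xi\}}$ equals $\pm1$ on $\{t\in(0,T]:X_t=\Xstar_T\}$, which is $\{\xi\}$, $\{T\}$ or empty, and (i)--(iii) hold trivially. Yet $\Xstar_T=-\operatorname{dist}(\xi,[0,T])$ has an atom at $0$, and $D_\Theta\Xstar_T=\1_{\{\xi<0\}}-\1_{\{\xi>T\}}$ vanishes on $\{0\le\xi\le T\}$. The derivative has opposite signs on the two sides of the maximiser, which is exactly what your ``common sign'' step would need to exclude. The same example refutes your Step~1 claim that $D_\Theta\Xstarn_T=D_\Theta X^{(n)}_{\tau_n}$ for an argmax time $\tau_n$: only membership in a closed convex hull of such values survives the weak limit. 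Independently, the restriction to $(0,T]$ leaves the event $\{\Xstar_T=X_0\}$ uncontrolled. For instance, $X_t=x-t+t^2\xi$ has $D_\Theta X_t=t^2>0$ on $(0,T]$, so even the hypothesis of Proposition~\ref{prop:unified} holds, but $\Pbb(\Xstar_T=x)=\Pbb(\xi\le 1/T)>0$.

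The missing idea is the local property $D_\Theta F=D_\Theta G$ a.s.\ on $\{F=G\}$, proved as in your Step~3 from the chain rule and closability, applied to localized maxima rather than to random times $\tau_n$. Note first that (i)--(iii) already put $X$ itself in the single-process setting. For fixed $t$, $D_\Theta X^{(n)}_t\rightharpoonup D_\Theta X_t$ because the graph of $D_\Theta$ is weakly closed. Also $\E[\sup_t|D_\Theta X_t|^p]<\infty$ by weak lower semicontinuity on finite grids plus right-continuity. So $\tau_n$ and Mazur can be discarded. For rationals $r<s$, grid maxima give $M_{r,s}:=\sup_{[r,s]}X\in W^{1,p}_\Theta$ with $|D_\Theta M_{r,s}-D_\Theta X_r|\le\sup_{u\in[r,s]}|D_\Theta X_u-D_\Theta X_r|$; this is a closed convex constraint, hence stable under weak limits. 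The local property then gives $D_\Theta\Xstar_T=D_\Theta M_{r,s}$ a.s.\ on $\{\Xstar_T=M_{r,s}\}$. Shrinking windows around a maximiser $t_0$ yield $D_\Theta\Xstar_T=D_\Theta X_{t_0}$ whenever $D_\Theta X$ is continuous at $t_0$, as in the continuous case treated in \cite{Nualart2006}. With mere right-continuity the windows also see $[r,t_0)$, which is exactly what the example exploits. Your sign idea does become a proof under a stronger hypothesis relevant to this paper: $D_\Theta X$ c\`adl\`ag with $D_\Theta X_t>0$ and $D_\Theta X_{t-}>0$ for all $t\in(0,T]$, plus $\Pbb(\Xstar_T=X_0)=0$. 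Then $D_\Theta M_{q,T}\ge\inf_{[q,T]}D_\Theta X>0$ for rational $q\in(0,T)$. Right-continuity at $0$ gives $\{\Xstar_T>X_0\}\subset\bigcup_q\{\Xstar_T=M_{q,T}\}$, so $D_\Theta\Xstar_T>0$ a.s.\ and your Step~3 finishes the argument.
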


\medskip
Lemma~\ref{lem:criterion} is our main tool to transfer absolute continuity of the running maximum from suitable approximations.
In the sequel, the processes we consider admit c\`adl\`ag versions; in particular they are right-continuous, so the right-continuity requirements in Lemma~\ref{lem:criterion} are met once such versions are fixed.
In applications, it is convenient to enforce a stronger nondegeneracy condition: if one can construct an admissible direction $\Theta$
such that $D_\Theta X_t$ is strictly positive for all $t>0$, then the condition on the set of maximizers required in Lemma~\ref{lem:criterion}
is automatically satisfied.
The next proposition records this simplified sufficient condition, which will be verified in our two main regimes.

\begin{remark}
In Section~\ref{sec:proof-elliptic}, we apply Lemma~\ref{lem:criterion} with the constant sequence $X^{(n)}\equiv X$
and with $\Theta=(h,0)$, i.e., we exploit only the Brownian direction.
\end{remark}

\begin{proposition}[A unified sufficient condition]\label{prop:unified}
Fix $p>1$ and let $\Theta=(h,v)\in \Hb_{\infty-}\times \Vb_{\infty-}$.
Assume that for every $t\in[0,T]$ we have $X_t\in W^{1,p}_\Theta$, that $\Xstar_T\in L^p(\Omega)$, that $D_\Theta X$ admits a right-continuous version with
\[
\E\Big[\sup_{0\le t\le T}|D_\Theta X_t|^p\Big]<\infty,
\]
and that
\[
\Pbb\big(D_\Theta X_t>0,\ \forall t\in(0,T]\big)=1.
\]
Then $\law(\Xstar_T)$ is absolutely continuous with respect to the Lebesgue measure on $\R$.
\end{proposition}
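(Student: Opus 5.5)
The plan is to apply Lemma~\ref{lem:criterion} with the constant approximating sequence $X^{(n)}:\equiv X$ for all $n\ge1$, keeping the same $p$ and the same direction $\Theta=(h,v)$. The work then reduces to checking that each hypothesis of the lemma follows from the assumptions of Proposition~\ref{prop:unified}, and that strict positivity of $D_\Theta X$ gives the nondegeneracy condition on the set of maximizers.

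First, hypothesis (i). Since $X^{(n)}=X$, we have $\sup_n\E[|\Xstarn_T|^p]=\E[|\Xstar_T|^p]<\infty$, because $\Xstar_T\in L^p(\Omega)$. Membership $X^{(n)}_t=X_t\in W^{1,p}_\Theta$ for every $t$ is assumed. The uniform bound $\sup_n\E[\sup_{s\le T}|D_\Theta X^{(n)}_s|^p]$ equals the single quantity $\E[\sup_{s\le T}|D_\Theta X_s|^p]$, which is finite by hypothesis. One technical point needs care here. The supremum must be taken over the fixed right-continuous version of $D_\Theta X$, so that it is measurable. I will fix this version once and use it both in (i) and in the final condition; this also gives (ii) and the right-continuity requirement on $D_\Theta X$. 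Hypothesis (iii) is trivial, since $\sup_s|X^{(n)}_s-X_s|=0$. As remarked after Lemma~\ref{lem:criterion}, $X$ is taken càdlàg, hence right-continuous.

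Second, the maximizer condition. Let $\Omega_0:=\{D_\Theta X_t>0\ \forall t\in(0,T]\}$, which has probability one by assumption. On $\Omega_0$ we have $D_\Theta X_t\neq0$ for every $t\in(0,T]$, and in particular on the random set $\{t\in(0,T]:X_t=\Xstar_T\}$, whether or not that set is empty. Hence the event in Lemma~\ref{lem:criterion} contains $\Omega_0$, and its probability is $1$. If one worries about measurability of that event, it suffices that it contains a measurable full-probability set, given completeness of the filtration. The lemma then yields absolute continuity of $\law(\Xstar_T)$.

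The only step I expect to need any care is the bookkeeping of versions. The derivative $D_\Theta X_t$ is defined for each fixed $t$ only up to a null set, while both the supremum bound and the pathwise positivity statement refer to the right-continuous version. I would state explicitly that all three uses (the moment bound, condition (ii), and positivity) refer to the same version. Apart from this, the proof is a direct specialization of Lemma~\ref{lem:criterion}.
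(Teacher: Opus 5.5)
Your proposal is correct and follows the paper's proof exactly. You apply Lemma~\ref{lem:criterion} with the constant sequence $X^{(n)}\equiv X$, read off (i)--(iii) from the hypotheses, and observe that strict positivity of $D_\Theta X$ on $(0,T]$ gives $D_\Theta X_t\neq 0$ on the set of maximizers. Your added care about fixing one right-continuous version of $D_\Theta X$, and about the right-continuity of $X$ (which the proposition leaves implicit), is sensible bookkeeping that the paper's short proof passes over.
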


\begin{proof}
Apply Lemma~\ref{lem:criterion} with the constant sequence $X^{(n)}\equiv X$.
Assumption $X_t\in W^{1,p}_\Theta$ for every $t\in[0,T]$ and the $L^p$-bounds on $\Xstar_T$ and $\sup_{t\le T}|D_\Theta X_t|$
imply that Lemma~\ref{lem:criterion}\,\textup{(i)} holds (with $\Xstarn_T=\Xstar_T$).
The right-continuity of $D_\Theta X$ gives \textup{(ii)}, and \textup{(iii)} is immediate.
Finally, the strict positivity implies
$\Pbb(D_\Theta X_t\neq 0\text{ on }\{t\in(0,T]:X_t=\Xstar_T\})=1$.
Hence Lemma~\ref{lem:criterion} yields the claim.
\end{proof}

\begin{remark}
Our proofs of Theorems~\ref{thm:elliptic} and~\ref{thm:jump} consist in constructing an admissible direction $\Theta$
such that $D_\Theta X_t>0$ for all $t>0$.
Once such a direction is available, Proposition~\ref{prop:unified} reduces the absolute-continuity statement to:
(i) $X_t\in W^{1,p}_\Theta$ for all $t$,
(ii) moment bounds for $\Xstar_T$,
(iii) an $L^p$ bound for $\sup_{t\le T}|D_\Theta X_t|$ and a right-continuous version of $D_\Theta X$.
In the elliptic regime, \textup{(i)} is obtained in Lemma~\ref{lem:DX-identification}, while \textup{(ii)} and \textup{(iii)} follow from Lemmas~\ref{lem:apriori} and~\ref{lem:J}, together with the explicit formula in Lemma~\ref{lem:DXpos}.
In the pure-jump regime, the moment bounds are given in Lemma~\ref{lem:jump-moment}, and membership $X\in W^{1,p}_\Theta$ with the explicit representation \eqref{eq:DXjump}
follows from the truncation argument recorded in Lemma~\ref{lem:jump-differentiability} and the explicit representation in Lemma~\ref{lem:DX-jump-expression}.
\end{remark}

\section{Main results}\label{sec:main}

\subsection{Uniformly elliptic Brownian part}

We begin with the general jump-diffusion model \eqref{eq:SDE-general} driven by both a Brownian motion and a non-truncated L\'evy process.
The key hypothesis is uniform ellipticity of the diffusion coefficient $\sigma$, which allows us to construct a purely Brownian perturbation
and obtain an explicit strictly positive formula for $D_\Theta X_t$.
For clarity we state the coefficient assumptions separately.

\begin{assumption}[Coefficients: elliptic Brownian regime]\label{ass:elliptic}
Assume the following. Here $\|\cdot\|_\infty$ denotes the supremum norm over the relevant domain.
\begin{enumerate}[label=(E\arabic*),leftmargin=2.7em]
\item $b,\sigma:[0,T]\times\R\to\R$ are measurable in $t$ and $C^1$ in $x$ with
$\|\partial_x b\|_\infty+\|\partial_x\sigma\|_\infty<\infty$ and $\|b(\cdot,0)\|_\infty+\|\sigma(\cdot,0)\|_\infty<\infty$;
\item (\emph{Uniform ellipticity}) there exists $\sigma_0>0$ such that $\inf_{(t,x)\in[0,T]\times\R}|\sigma(t,x)|\ge\sigma_0$;
\item (\emph{Boundedness}) $\|\sigma\|_\infty<\infty$;
\item $\kappa:[0,T]\to\R$ is a bounded \emph{deterministic} measurable function, i.e., $\|\kappa\|_\infty<\infty$.
\end{enumerate}
\end{assumption}

Assumption~\ref{ass:elliptic}\,\textup{(E3)} (boundedness of $\sigma$) is imposed for convenience in the proof of Theorem~\ref{thm:elliptic}.
It will be removed in Corollary~\ref{cor:elliptic-linear} under the weaker linear growth implied by \textup{(E1)}.
We now state the density result for the running maximum.

\begin{theorem}[Density of $\Xstar_T$ in the elliptic Brownian regime]\label{thm:elliptic}
Let Assumption~\ref{ass:Levy} hold and consider \eqref{eq:SDE-general}.
Under Assumption~\ref{ass:elliptic}, for every $T>0$ the law of $\Xstar_T=\sup_{0\le s\le T}X_s$
is absolutely continuous with respect to the Lebesgue measure on $\R$.
\end{theorem}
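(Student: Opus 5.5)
We plan to apply Proposition~\ref{prop:unified} with a purely Brownian direction $\Theta=(h,0)$, so that the jump component is never perturbed and no truncation of $L$ is needed. The natural choice is the adapted direction
\[
h(s):=\frac{1}{\sigma(s,X_s)},\qquad s\in[0,T],
\]
which is bounded by $\sigma_0^{-1}$ by (E2). It therefore lies in $\Hb_{\infty-}$, and the jump part of $\Theta$ is $v\equiv0\in\Vb_{\infty-}$. The formal computation is as follows. Differentiating \eqref{eq:SDE-general} along the Cameron--Martin shift $W\mapsto W+\varepsilon\int_0^\cdot h$ leaves the additive term $\kappa(t)\,\dd L_t$ untouched, since $\kappa$ is deterministic and $v=0$. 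Hence $Y_t:=D_\Theta X_t$ should solve the linear equation
\[
Y_t=\int_0^t \partial_x b(s,X_s)Y_s\,\dd s+\int_0^t \partial_x\sigma(s,X_s)Y_s\,\dd W_s+\int_0^t \sigma(s,X_s)h(s)\,\dd s .
\]
The last term equals $t$. Let $J_t$ be the stochastic exponential
\[
J_t=\exp\Big(\int_0^t(\partial_x b-\tfrac12(\partial_x\sigma)^2)(s,X_s)\,\dd s+\int_0^t\partial_x\sigma(s,X_s)\,\dd W_s\Big).
\]
Variation of constants then gives the explicit formula
\[
D_\Theta X_t=J_t\int_0^t J_s^{-1}\,\dd s .
\]
Because $J$ is a strictly positive continuous process, this quantity is strictly positive for every $t\in(0,T]$, almost surely and simultaneously in $t$. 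This is the content of Lemma~\ref{lem:DXpos}, and it is the geometric part of the argument.

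For the analytic part we proceed in four steps.
\begin{enumerate}[label=(\alph*),leftmargin=2.4em]
\item We establish moment bounds $\E[\sup_{t\le T}|X_t|^p]<\infty$ (Lemma~\ref{lem:apriori}). These follow from Lipschitz and linear growth of $b$, boundedness of $\sigma$, and the moments of $L$ up to order $p<\beta$ provided by (L2). Small jumps are controlled by BDG, and large jumps by $\int_{|z|>1}|z|^p\nu(\dd z)<\infty$. Consequently $\Xstar_T\in L^p$ for some $p\in(1,\beta)$.
\item We prove bounds on $J$ and $J^{-1}$ (Lemma~\ref{lem:J}): since $\partial_x b$ and $\partial_x\sigma$ are bounded, $\E[\sup_{t\le T}(J_t^q+J_t^{-q})]<\infty$ for all $q$. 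Combined with the explicit formula, this yields $\E[\sup_t|D_\Theta X_t|^p]<\infty$ and a continuous, hence right-continuous, version of $D_\Theta X$.
\item We identify $X_t\in W^{1,p}_\Theta$ with the derivative given by the formula above (Lemma~\ref{lem:DX-identification}).
\item Once (a)--(c) are in hand, Proposition~\ref{prop:unified} applies directly and gives absolute continuity of $\law(\Xstar_T)$.
\end{enumerate}

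The main obstacle is step (c). The space $W^{1,p}_\Theta$ is defined as a closure of smooth cylinder functionals, so we must show that the strong solution belongs to it. We also face two complications: $h$ is random and adapted, and $b,\sigma$ are only $C^1$ in $x$ and merely measurable in $t$.

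Our plan is a Picard-iteration argument. Each iterate $X^{k}$ is built from $W$ and $L$ through smooth operations, and we show by induction that $X^k_t\in W^{1,p}_\Theta$ with $D_\Theta X^k$ satisfying the iterated linear equation. The induction uses the chain rule in $W^{1,p}_\Theta$ and the commutation $D_\Theta\int_0^t u_s\,\dd W_s=\int_0^t D_\Theta u_s\,\dd W_s+\int_0^t u_s h(s)\,\dd s$ for the Brownian direction. The jump integrals, which involve a deterministic integrand in $L$, have zero derivative because $v=0$. Uniform $L^p$ bounds on $D_\Theta X^k$, obtained by Gronwall from the boundedness of $\partial_x b$ and $\partial_x\sigma$, together with convergence of $X^k\to X$ in $L^p$, then give $X_t\in W^{1,p}_\Theta$ by closedness or weak compactness. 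The limit derivative solves the linear SDE, and uniqueness for that equation identifies it as $J_t\int_0^tJ_s^{-1}\,\dd s$.

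If the chain rule requires $C^1$ coefficients with some time regularity, we would first mollify in $x$ and in $t$ and pass to the limit in the same way.
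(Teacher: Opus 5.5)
Your proposal is correct, but it takes a different route from the paper in two places.

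\emph{The direction.} The paper takes $h(t)=\frac{\sigma(t,X_t)}{|\sigma(t,X_t)|}J_t^{-1}$. This has two costs.
\begin{itemize}
\item Checking $h\in\Hb_{\infty-}$ already requires the negative moments of $J$.
\item It gives $D_\Theta X_t=J_t\int_0^t|\sigma(s,X_s)|J_s^{-2}\,\dd s$, whose $L^p$ bound uses (E3), or the extra H\"older step with $q<\beta$ in Corollary~\ref{cor:elliptic-linear}.
\end{itemize}
Your choice $h=1/\sigma(s,X_s)$ is bounded by $\sigma_0^{-1}$, so admissibility is immediate. The forcing becomes $\sigma h\equiv1$, and $D_\Theta X_t=J_t\int_0^tJ_s^{-1}\,\dd s$ contains no $\sigma$ at all. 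Positivity comes from the same source, namely strict positivity and continuity of $J$. The bound on $\sup_t|D_\Theta X_t|$ then holds for every $p$ without (E3). Note that the paper's Lemma~\ref{lem:DXpos} is stated for its own $h$, so you need your own version of it. It is the same variation-of-constants computation.

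\emph{Membership in $W^{1,p}_\Theta$.} The paper never differentiates the SDE inside $W^{1,p}_\Theta$. Instead it:
\begin{itemize}
\item imports classical Wiener--Malliavin differentiability $X_t\in\mathbb{D}^{1,q}_W$ for $p<q<\beta$, treating the jump term as an independent inhomogeneity;
\item uses $D^W_rX_t=J_tJ_r^{-1}\sigma(r,X_r)\1_{\{r\le t\}}$;
\item transfers this via the identity $D_\Theta F=\int_0^TD^W_sF\,h(s)\,\dd s$ on cylinder functionals, extended by H\"older.
\end{itemize}
The formula for $D_\Theta X_t$ then follows by integrating $D^W_rX_t$ against $h$, and no commutation relation is needed.

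Your Picard scheme effectively re-proves that classical result directly in $W^{1,p}_\Theta$. It is self-contained but heavier. You must:
\begin{itemize}
\item prove $D_\Theta\int u\,\dd W=\int D_\Theta u\,\dd W+\int uh\,\dd s$ for a random $h$. Adaptedness of $h$ is essential here: it makes $D_\Theta u_s$ adapted, equivalently lets you move $h(r)$ inside $\int_r^t\cdots\,\dd W_s$ before applying stochastic Fubini;
\item leave slack in the exponents for the product and chain rules;
\item show $\int_0^t\kappa(s)\,\dd L_s\in W^{1,p}_\Theta$ with zero derivative, by approximating with cylinder functionals $N(g)$;
\item identify the limit derivative if you use weak compactness. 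Either prove strong convergence of $D_\Theta X^k$ by Gronwall applied to differences, or pass to the limit in products whose first factors $\partial_xb(s,X^k_s)$ and $\partial_x\sigma(s,X^k_s)$ are bounded and converge in probability.
\end{itemize}
None of these is a gap, but together they are the real work of your step (c). The paper avoids them by outsourcing to $\mathbb{D}^{1,q}_W$.
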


\begin{remark}[Relation to Song--Xie \cite{SongXie2018}]
Song and Xie \cite{SongXie2018} established a running-maximum criterion on the Wiener--Poisson space and verified it for
L\'evy--It\^o diffusions driven by a Brownian motion and a \emph{truncated} symmetric stable process.
Theorem~\ref{thm:elliptic} shows that, under uniform ellipticity of the Brownian coefficient, one can construct a strictly positive admissible direction
using only the Brownian component.
In particular, we obtain absolute continuity of $\Xstar_T$ for the \emph{original} non-truncated model without any jump-direction nondegeneracy.
\end{remark}

\begin{remark}[Example: an elliptic jump diffusion]
Let $L$ be a symmetric $\alpha$-stable process with $\alpha\in(1,2)$.
Consider, for instance,
\[
\dd X_t=-\theta X_t\,\dd t+\sigma(X_t)\,\dd W_t+\dd L_t,\qquad X_0=x,
\]
with $\theta>0$ and $\sigma(x):=\sigma_0+\frac{1}{1+x^2}$.
Then $\sigma$ is $C^1$, bounded, and uniformly elliptic ($\inf_x\sigma(x)=\sigma_0$), while $b(t,x)=-\theta x$ is globally Lipschitz.
Hence Assumption~\ref{ass:elliptic} holds (with $\kappa\equiv1$) and Theorem~\ref{thm:elliptic} applies.
\end{remark}

\begin{remark}[On density regularity]
Theorems~\ref{thm:elliptic} and~\ref{thm:jump} establish \emph{existence} of a density for $\Xstar_T$.
Questions of regularity (continuity, smoothness) are considerably more delicate for running maxima and are not pursued here.
We note that the explicit lower bound
\[
D_\Theta X_t\ge \sigma_0\,J_t\int_0^t J_s^{-2}\,\dd s
\]
from Lemma~\ref{lem:DXpos} below (Section~\ref{sec:proof-elliptic}), where $J$ denotes the Jacobian process introduced in \eqref{eq:J}, may serve as a starting point for Malliavin--Sobolev regularity arguments.
\end{remark}

\begin{corollary}[Dropping boundedness of $\sigma$]\label{cor:elliptic-linear}
Let Assumption~\ref{ass:Levy} hold and consider \eqref{eq:SDE-general}.
Assume (E1), (E2) and (E4) of Assumption~\ref{ass:elliptic}, but drop the boundedness condition (E3).
That is, we assume all conditions of Assumption~\ref{ass:elliptic} except boundedness of $\sigma$; under (E1) this implies a linear growth condition $|\sigma(t,x)|\le C(1+|x|)$.
Then, for every $T>0$, the law of $\Xstar_T$ is absolutely continuous with respect to the Lebesgue measure on $\R$.
\end{corollary}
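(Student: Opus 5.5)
The plan is to run the same scheme as in the proof of Theorem~\ref{thm:elliptic}, but without using (E3). We invoke Proposition~\ref{prop:unified} with a purely Brownian direction $\Theta=(h,0)$. The natural choice is
\[
h(s):=\frac{J_s^{-1}}{\sigma(s,X_s)}\cdot\text{(normalising factor)},
\]
where $J_t$ is the Jacobian process of \eqref{eq:J}. Since the jump term $\kappa(t)\,\dd L_t$ is additive, $J$ solves the linear equation
\[
\dd J_t=\partial_x b(t,X_t)J_t\dt+\partial_x\sigma(t,X_t)J_t\,\dd W_t,\qquad J_0=1,
\]
and does not jump. Formally, $D_\Theta X_t$ then solves
\[
\dd Y_t=\partial_x b\,Y_t\dt+\partial_x\sigma\,Y_t\,\dd W_t+\sigma(t,X_t)h(t)\dt,\qquad Y_0=0,
\]
so by variation of constants $D_\Theta X_t=J_t\int_0^t J_s^{-1}\sigma(s,X_s)h(s)\,\dd s$. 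Taking $h(s)=J_s^{-1}\,\mathrm{sgn}(\sigma(s,X_s))$ gives
\[
D_\Theta X_t=J_t\int_0^t J_s^{-2}|\sigma(s,X_s)|\,\dd s\ \ge\ \sigma_0 J_t\int_0^t J_s^{-2}\,\dd s>0 .
\]
Here $\sigma$ has constant sign by (E2) and continuity in $x$, and $J_t>0$ because $J$ is a stochastic exponential. This is exactly the lower bound of Lemma~\ref{lem:DXpos}. The important point is that this choice of $h$ involves no $\sigma$ in the denominator and no boundedness of $\sigma$: admissibility $h\in\Hb_{\infty-}$ only needs $\E[\sup_{s\le T}J_s^{-q}]<\infty$ for all $q$. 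That moment bound follows from the boundedness of $\partial_x b$ and $\partial_x\sigma$ in (E1), via the explicit exponential formula for $J^{-1}$ and the BDG inequality.

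The remaining inputs of Proposition~\ref{prop:unified} are the following. First, $\Xstar_T\in L^p$. Under the linear growth $|b|+|\sigma|\le C(1+|x|)$ from (E1), this follows by the standard argument: split $\kappa L$ into its compensated small-jump part and its large-jump part. By (L2) the large-jump part has moments of every order $p<\beta$, so we restrict to some $p\in(1,\beta)$. Then apply BDG to the $W$ and $\tilde N$ integrals and close with Gronwall. Only the $\sigma$-term changes relative to Lemma~\ref{lem:apriori}, and linear growth suffices there. Second, we need $\E[\sup_t|D_\Theta X_t|^p]<\infty$ and a right-continuous version; the explicit formula is continuous in $t$. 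We bound $\sup_t|D_\Theta X_t|\le \sup_t J_t\int_0^T J_s^{-1}|\sigma(s,X_s)|\,|h(s)|\,\dd s$. By H\"older this requires $\E[\sup J^{q}]$, $\E[\sup J^{-q}]$ for all $q$ (Lemma~\ref{lem:J}) and $\E[\sup_s|\sigma(s,X_s)|^{p'}]\le C(1+\E[\sup|X_s|^{p'}])$ for some $p'\in(p,\beta)$. Hence we choose $1<p<p'<\beta$ and allocate H\"older exponents so that only the $X$-factor carries the restricted moment.

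The main obstacle is the identification $X_t\in W^{1,p}_\Theta$ with the above formula (Lemma~\ref{lem:DX-identification}), since its proof may have used boundedness of $\sigma$. My first attempt is localisation by truncation. Set $\sigma_R(t,x):=\sigma(t,\phi_R(x))$, where $\phi_R$ is a smooth, monotone clipping of $x$ to $[-R-1,R+1]$ with $|\phi_R'|\le1$. Then $\sigma_R$ satisfies (E1)--(E3) uniformly in $R$ as far as $\|\partial_x\sigma_R\|_\infty$ and $\sigma_0$ are concerned, and Lemma~\ref{lem:DX-identification} applies to the solution $X^R$. The Jacobians $J^R$ have moment bounds uniform in $R$, and the bounds on $X^R$ are uniform in $R$ by linear growth. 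Then $X^R\to X$ in $L^p(\sup)$ and $D_{\Theta_R}X^R\to$ the formula above, where $\Theta_R$ uses $J^R,X^R$. The difficulty is that the direction depends on $R$. To avoid this, I would instead verify the identification directly: apply the closability of $D_\Theta$ for the fixed direction $\Theta=(h,0)$ to the Picard iterates of $X$. This works if Lemma~\ref{lem:DX-identification}'s argument only used Lipschitz coefficients and moment bounds. If neither works, fall back to the sequence version Lemma~\ref{lem:criterion} with $X^{(n)}:=X^{R_n}$, a common fixed direction, and the uniform estimates above.
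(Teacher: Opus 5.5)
Your outline is the paper's proof of Corollary~\ref{cor:elliptic-linear}. You use the same Brownian direction $h=\mathrm{sgn}(\sigma(t,X_t))J_t^{-1}$, the same variation-of-constants formula, Lemma~\ref{lem:apriori-linear} in place of Lemma~\ref{lem:apriori}, and the same H\"older split with $1<p<q<\beta$, so that only $\sup_s|\sigma(s,X_s)|$ carries a restricted moment.

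The only divergence is the step you flag as the main obstacle: showing $X_t\in W^{1,p}_\Theta$ without (E3). You leave it open among three options. The paper needs none of them, because Lemma~\ref{lem:DX-identification} is stated and proved under (E1), (E2), (E4) only. Its argument uses the Wiener derivative $D^W_rX_t=J_tJ_r^{-1}\sigma(r,X_r)\1_{\{r\le t\}}$. By linear growth, Lemma~\ref{lem:apriori-linear} and Lemma~\ref{lem:J}, this lies in $L^q(\Omega;L^2(0,T))$ for every $q<\beta$. Pairing with $h\in\Hb_{\infty-}$ via H\"older then gives $D_\Theta X_t=\int_0^tD^W_rX_t\,h(r)\,\dd r\in L^p$ for $p<q$. (E3) enters only in the last step of that lemma, the comparison with the shift derivative of Lemma~\ref{lem:DXrep}, which the corollary does not use.

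Your first (localization) route would also have worked. The obstruction you name, that the direction depends on $R$, disappears. The bounded-$\sigma$ identification holds for an \emph{arbitrary} $h\in\Hb_{\infty-}$, so you can keep the single direction $h$ built from the untruncated $X$ and $J$ and apply it to $X^R$. This gives $D_\Theta X^R_t=J^R_t\int_0^t(J^R_s)^{-1}\sigma_R(s,X^R_s)h(s)\,\dd s$.

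Let $\tau_R$ be the exit time of $X$ from $[-R,R]$. Then $X^R=X$ and $J^R=J$ on $[0,\tau_R]$, and $\Pbb(\tau_R\le T)\to0$. The $R$-uniform moment bounds you already have (of some order in $(p,\beta)$) then give $X^R_t\to X_t$ and $D_\Theta X^R_t\to J_t\int_0^tJ_s^{-1}\sigma(s,X_s)h(s)\,\dd s$ in $L^p$ by Vitali. Closability of $D_\Theta$ finishes the argument.

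By contrast, your route (c), through Lemma~\ref{lem:criterion} with $X^{(n)}=X^{R_n}$, does not bypass the difficulty. That lemma's nondegeneracy hypothesis is on $D_\Theta X$ itself. You would still have to identify $D_\Theta X$ as the limit of $D_\Theta X^{R_n}$, which is exactly the localization argument just described.
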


\begin{proof}
Under (E1) we have $|\sigma(t,x)|\le \sup_{t}|\sigma(t,0)|+\|\partial_x\sigma\|_\infty|x|$, so $\sigma$ has linear growth.
The moment bound Lemma~\ref{lem:apriori-linear} replaces Lemma~\ref{lem:apriori}.
Moreover, Lemma~\ref{lem:DX-identification} provides the existence of the directional derivative $D_\Theta X$ (and the linear SDE it satisfies) without assuming boundedness of $\sigma$.

It remains to verify the $L^p$-integrability of $\sup_{t\le T}|D_\Theta X_t|$ under the weaker growth assumption on $\sigma$.
Indeed, by the representation \eqref{eq:DX-formula} from Lemma~\ref{lem:DXpos},
\[
\sup_{t\le T}|D_\Theta X_t|
\le T\,\sup_{s\le T}|\sigma(s,X_s)|\ \sup_{t\le T}J_t\ \sup_{s\le T}J_s^{-2}.
\]
Fix $p\in(1,\beta)$ and choose $q$ such that $p<q<\beta$.
Using the linear growth of $\sigma$ and Lemma~\ref{lem:apriori-linear}, we have $\sup_{s\le T}|\sigma(s,X_s)|\in L^q(\Omega)$.
On the other hand, Lemma~\ref{lem:J} yields $\E[\sup_{t\le T}J_t^{r}]<\infty$ and $\E[\sup_{t\le T}J_t^{-r}]<\infty$ for every $r\ge1$,
since $\|\partial_x b\|_\infty+\|\partial_x\sigma\|_\infty<\infty$.
We apply H\"older's inequality with three factors: choose exponents $r_1,r_2,r_3>1$ satisfying $1/r_1+1/r_2+1/r_3=1$ with $r_1=q/p$ (so that $\sup|\sigma|^p\in L^{r_1}$), and $r_2,r_3$ large enough that $\sup J_t^{p}\in L^{r_2}$ and $\sup J_s^{-2p}\in L^{r_3}$ (which holds for any choice by Lemma~\ref{lem:J}). For instance, one may take $r_1=q/p$ and $r_2=r_3=2q/(q-p)$.
This gives $\E[\sup_{t\le T}|D_\Theta X_t|^p]<\infty$.
The rest of the proof of Theorem~\ref{thm:elliptic} is unchanged.
\end{proof}

\subsection{Pure-jump regime with time-dependent jump weight}

We state a complementary result where the Brownian part is absent (or degenerate) and the jump weight may vanish on subintervals.

\begin{assumption}[Coefficients: pure-jump regime]\label{ass:jump}
Assume:
\begin{enumerate}[label=(J\arabic*),leftmargin=2.7em]
\item $\sigma\equiv 0$ and $b:[0,T]\times\R\to\R$ is measurable such that $x\mapsto b(t,x)$ is $C^1$ for every $t\in[0,T]$, with $\|\partial_x b\|_\infty<\infty$ and $\|b(\cdot,0)\|_\infty<\infty$;
\item $\kappa:[0,T]\to\R$ is a bounded \emph{deterministic} measurable function and satisfies the nondegeneracy-in-time condition
\begin{equation}\label{eq:NDtime}
\int_0^t \kappa(s)^2\,\dd s>0,\qquad \forall t\in(0,T].
\end{equation}
\end{enumerate}
\end{assumption}

Condition~\eqref{eq:NDtime} rules out a completely silent initial interval for the jump noise.
Theorem~\ref{thm:jump} below shows that this minimal nondegeneracy in time, together with infinite activity of $\nu$, suffices to guarantee a density for $\Xstar_T$.

\begin{remark}[Time-dependent drift in the pure-jump regime]
Unlike \cite{NakagawaSuzuki2024}, Theorem~\ref{thm:jump} allows the drift to be time-dependent.
The proof is unchanged except that $b'(X_s)$ is replaced by $\partial_x b(s,X_s)$ in the linearized equation for $D_\Theta X$
and in the associated Jacobian $\exp(\int_0^t \partial_x b(s,X_s)\,\dd s)$; our choice of direction $v$ incorporates this Jacobian.
The time-independent case $b(t,x)\equiv b(x)$ is included.
\end{remark}

\begin{theorem}[Density of $\Xstar_T$ in a pure-jump model with a time-dependent weight (possibly vanishing)]\label{thm:jump}
Let Assumption~\ref{ass:Levy} hold and consider
\begin{equation}\label{eq:SDE-jump}
\dd X_t=b(t,X_t)\dt+\kappa(t)\,\dd L_t,\qquad X_0=x.
\end{equation}
Under Assumption~\ref{ass:jump}, for every $T>0$ the law of $\Xstar_T$ is absolutely continuous with respect to the Lebesgue measure on $\R$.
\end{theorem}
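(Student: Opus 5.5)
The plan is to apply Proposition~\ref{prop:unified} with a purely jump direction $\Theta=(0,v)$. Write $J_t:=\exp\big(\int_0^t\partial_x b(s,X_s)\,\dd s\big)$; since $\|\partial_x b\|_\infty<\infty$, $J$ and $J^{-1}$ are bounded by $e^{\|\partial_x b\|_\infty T}$. The perturbation moves the jump sizes of $L$ along a predictable field $v(s,z)$. Formally, $D_\Theta L_t=\int_0^t\int_{\R_0} v(s,z)\,N(\dd s,\dd z)$, possibly with a compensator correction that depends on the precise definition in \cite{NakagawaSuzuki2024}; we fix the convention of Lemma~\ref{lem:v-admissible}. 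The linearized equation is
\[
\dd D_\Theta X_t=\partial_x b(t,X_t)\,D_\Theta X_t\,\dd t+\kappa(t)\int_{\R_0} v(t,z)\,N(\dd t,\dd z),\qquad D_\Theta X_0=0.
\]
By variation of constants,
\[
D_\Theta X_t=J_t\int_0^t\int_{\R_0} J_s^{-1}\kappa(s)\,v(s,z)\,N(\dd s,\dd z).
\]

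We choose $v(s,z)=\kappa(s)\,J_{s-}\,\eta(z)$, which is predictable because $J$ is continuous and adapted. Here $\eta\ge0$ is a deterministic $C^1$ cutoff supported in a small punctured neighbourhood $\{0<|z|\le\varepsilon\}$, vanishing to high order at $0$ (for instance $\eta(z)=|z|^{2}\phi(z)$ with $\phi$ smooth, compactly supported, positive near $0$), so that it satisfies the $\Vb_{\infty-}$ integrability and the $\varrho$-weight requirements recalled in Lemma~\ref{lem:v-admissible}. With this choice,
\[
D_\Theta X_t=J_t\int_0^t\int_{\R_0}\kappa(s)^2\,\eta(z)\,N(\dd s,\dd z)\ \ge0,
\]
and right-continuity follows because the integral is a pure-jump increasing process of finite variation (since $\int\eta\,\dd\nu<\infty$).

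The analytic inputs come from earlier results. Moment bounds for $\Xstar_T$ in $L^p$, with $p\in(1,\beta)$, follow from Lemma~\ref{lem:jump-moment}. Membership $X_t\in W^{1,p}_\Theta$ and the explicit formula come from Lemmas~\ref{lem:jump-differentiability} and~\ref{lem:DX-jump-expression}, which use the truncation/weak-compactness scheme. The bound $\E[\sup_t|D_\Theta X_t|^p]<\infty$ follows from boundedness of $J$ and $\kappa$ together with $\E\big[(\int_0^T\int\eta\,\dd N)^p\big]<\infty$, since $\eta$ is bounded with $\int\eta\,\dd\nu<\infty$.

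The main obstacle is the \emph{weighted Poisson positivity}: showing that almost surely $\int_0^t\int\kappa(s)^2\eta(z)\,N(\dd s,\dd z)>0$ simultaneously for every $t\in(0,T]$. The integrand is a product, so $N$ restricted to $A=\{(s,z):\kappa(s)\neq0,\ \eta(z)>0\}$ is a Poisson random measure with intensity $\1_{\{\kappa\ne0\}}\dd s\otimes\1_{\{\eta>0\}}\nu(\dd z)$. For fixed $t$, the assumption \eqref{eq:NDtime} gives $\mathrm{Leb}(\{s\le t:\kappa(s)\neq0\})>0$, and infinite activity (L3) gives $\nu(\eta>0)=\infty$. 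Hence the intensity of $A\cap([0,t]\times\R_0)$ is infinite, so $N$ charges it almost surely. On such a point the integrand $\kappa^2\eta$ is strictly positive, hence the integral is positive. To pass to all $t$ simultaneously, apply this to a sequence $t_k\downarrow0$ and use monotonicity in $t$: for every $t>0$ some $t_k<t$ satisfies it, and a countable intersection of full-measure events has full measure. Then $D_\Theta X_t\ge e^{-\|\partial_x b\|_\infty T}\int_0^t\int\kappa^2\eta\,\dd N>0$ for all $t\in(0,T]$, and Proposition~\ref{prop:unified} concludes. I expect the delicate point to be checking that the random (but bounded) factor $J_{s-}$ in $v$ is compatible with the admissibility norm of $\Vb_{\infty-}$ and with the derivative identification of Lemma~\ref{lem:DX-jump-expression}. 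If that fails, I would drop $J$ from $v$ and instead use $v=\kappa(s)\eta(z)$, still obtaining positivity since $J_t J_s^{-1}>0$.
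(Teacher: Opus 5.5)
Your proposal follows the paper's proof essentially step by step. The common steps are: a jump direction of the form $v(t,z)=\kappa(t)\,G_t\,\eta(z)$ with $G$ a Jacobian factor bounded by deterministic constants; the variation-of-constants formula for $D_\Theta X$; weighted Poisson positivity from the infinite intensity of $\{s\le t:\kappa(s)\neq0\}\times\{\eta>0\}$; a countable sequence of times combined with monotonicity; and Proposition~\ref{prop:unified}.

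The difference in the direction is cosmetic. The paper takes $G_t=J_t^{-1}$, which gives $D_\Theta X_t=J_t\int_0^t\int\kappa(s)^2J_s^{-2}\eta(z)\,N(\dd s,\dd z)$. Your $G_t=J_t$ gives the slightly cleaner $J_t\int_0^t\int\kappa(s)^2\eta(z)\,N(\dd s,\dd z)$.

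The worry you raise about the random factor is unfounded. It is handled exactly as the factor $A_t$ in Lemma~\ref{lem:v-admissible}: since $J^{\pm1}$ is bounded by deterministic constants, $\|v\|_{\Vb_p}$ reduces to the $\nu$-integrability of $\eta'$ and $\eta\varrho$. Lemmas~\ref{lem:jump-differentiability} and~\ref{lem:DX-jump-expression} also go through unchanged for your $v$, since they only use $\kappa v\ge0$, $|\kappa v|\le C\eta$, and support in $\{|z|\le1\}$.

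There is one concrete slip. Your example $\eta(z)=|z|^2\phi(z)$ is \emph{not} admissible in general. Near $0$ it gives $|\eta'(z)|\sim|z|$ and $\eta(z)\varrho(z)\sim|z|$. The $\Vb_p$-norm requires both to lie in $L^1(\nu)$ and in $L^p(\nu)$ for the relevant $p>1$. For the symmetric $\alpha$-stable measure with $\alpha\in(1,2)$ (the paper's main example), $\int_{|z|\le1}|z|\,\nu(\dd z)=\infty$, and $\int_{|z|\le1}|z|^p\,\nu(\dd z)=\infty$ for $p\le\alpha$. You need $\eta$ to vanish to order at least $3$ at the origin; the paper uses $|z|^4$. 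Then $|\eta'|+\eta\varrho=O(|z|^2)$ near $0$ and (L1) alone suffices. With that correction, the rest of your argument is sound.
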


\begin{remark}[Relation to the constant-weight case and sharpness]
When $\kappa(t)\equiv \sigma_2\neq 0$ and the drift is time independent (i.e., $b(t,x)\equiv b(x)$), \eqref{eq:SDE-jump} reduces to the additive-coefficient model studied in \cite{NakagawaSuzuki2024}
(with $\sigma_1=0$), and Theorem~\ref{thm:jump} recovers their absolute continuity result in the pure-jump case.
The new feature here is that $\kappa$ may be time-dependent and may vanish on subintervals, as long as the minimal
nondegeneracy-in-time condition \eqref{eq:NDtime} holds.
Theorem~\ref{thm:silent} shows that if the noise is silent on an initial interval, atoms may appear in $\law(\Xstar_T)$,
so \eqref{eq:NDtime} is essentially sharp.
We emphasize that the obstruction arises specifically from \emph{initial} silence: if $\kappa$ vanishes on an interior interval $[a,b]\subset(0,T]$ but condition~\eqref{eq:NDtime} still holds (because $\int_0^t\kappa(s)^2\,\dd s>0$ for every $t>0$), then Theorem~\ref{thm:jump} applies and $\Xstar_T$ admits a density.
\end{remark}

\begin{remark}[Example: a weight vanishing at the origin]
Let $L$ be a symmetric $\alpha$-stable process with $\alpha\in(1,2)$ and consider
\[
\dd X_t=b(X_t)\,\dd t+t^\gamma\,\dd L_t,\qquad X_0=x,
\]
with $\gamma>0$ and $b\in C^1$ with bounded derivative.
Here $\kappa(t)=t^\gamma$ vanishes at $t=0$, but the nondegeneracy-in-time condition holds since
$\int_0^t s^{2\gamma}\,\dd s>0$ for every $t>0$.
Therefore Theorem~\ref{thm:jump} applies.
\end{remark}

\begin{remark}[A mixed-direction extension]
Proposition~\ref{prop:unified} also permits mixed directions $\Theta=(h,v)$.
This suggests that one may treat models in which both $\sigma$ and $\kappa$ are present but each may degenerate on subintervals, under an appropriate combined nondegeneracy-in-time condition.
The main difficulty is that the variation-of-constants formula then involves a \emph{sum} of a Lebesgue integral and a Poisson integral, and guaranteeing pathwise strict positivity of this sum seems to require new ideas.
We leave such mixed-degeneracy regimes for future work.
\end{remark}

\subsection{Silent intervals and an atom--density decomposition}

The next result shows that the nondegeneracy-in-time condition is essentially sharp.

\begin{theorem}[Silent intervals create atoms]\label{thm:silent}
Assume that $\kappa:[0,T]\to\R$ is a deterministic measurable function, and that there exists $\delta\in(0,T)$ such that $\sigma(t,x)=0$ for a.e.\ $t\in[0,\delta]$ and all $x$,
and $\kappa(t)=0$ for a.e.\ $t\in[0,\delta]$.
Assume moreover that the drift coefficient is globally Lipschitz in the state variable (for instance, this holds under Assumption~\ref{ass:elliptic}\,\textup{(E1)} or Assumption~\ref{ass:jump}\,\textup{(J1)}).
Then $X$ is deterministic on $[0,\delta]$ and $\Xstar_\delta$ is a deterministic constant $m_0$.
Moreover,
\[
\Xstar_T=\max\Big(m_0,\ \sup_{\delta\le s\le T}X_s\Big),
\]
and hence $\law(\Xstar_T)$ may contain an atom at $m_0$:
\[
\Pbb(\Xstar_T=m_0)=\Pbb\Big(\sup_{\delta\le s\le T}X_s\le m_0\Big).
\]
If, in addition, the shifted post-$\delta$ dynamics satisfies the assumptions of Theorem~\ref{thm:elliptic} (or Theorem~\ref{thm:jump}),
then the restriction of $\law(\Xstar_T)$ to $(m_0,\infty)$ admits a density (an explicit atom--density decomposition).
\end{theorem}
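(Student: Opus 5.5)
The plan has three stages: deal with the silent interval, split the maximum, and then transfer the density results to the post-$\delta$ process by conditioning on its starting point.

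\emph{Silent interval.} On $[0,\delta]$ we have $\sigma(t,\cdot)=0$ and $\kappa(t)=0$ for a.e.\ $t$. Hence $\int_0^t\sigma(s,X_s)\,\dd W_s=0$ a.s., since its quadratic variation $\int_0^t\sigma(s,X_s)^2\,\dd s$ vanishes. Similarly $\int_0^t\kappa(s)\,\dd L_s=0$ for $t\le\delta$. To justify the latter, write it through \eqref{eq:LevyIto} as a compensated small-jump integral, whose second moment is $\int_0^t\kappa^2\,\dd s\int_{|z|\le1}z^2\,\nu(\dd z)=0$, plus a large-jump sum over jump times. A fixed deterministic Lebesgue-null set carries no jumps a.s., so this sum vanishes as well. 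Consequently $X$ solves the ODE $\dot X_t=b(t,X_t)$ on $[0,\delta]$, and global Lipschitz continuity in $x$ gives a unique deterministic continuous solution $x(\cdot)$. Therefore $\Xstar_\delta=m_0:=\max_{s\le\delta}x(s)$ is deterministic.

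\emph{Splitting the maximum.} Since $[0,T]=[0,\delta]\cup[\delta,T]$,
\[
\Xstar_T=\max\Bigl(m_0,\sup_{\delta\le s\le T}X_s\Bigr)
\]
holds trivially. Hence $\{\Xstar_T=m_0\}=\{\sup_{\delta\le s\le T}X_s\le m_0\}$, which gives the atom formula.

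\emph{Density on $(m_0,\infty)$.} Put $Y_u:=X_{\delta+u}$ for $u\in[0,T-\delta]$. Then $Y$ solves the shifted SDE with coefficients $b(\delta+u,\cdot)$, $\sigma(\delta+u,\cdot)$ and $\kappa(\delta+u)$, driven by $W_{\delta+\cdot}-W_\delta$ and $L_{\delta+\cdot}-L_\delta$. These are again a Brownian motion and a L\'evy process with the same $\nu$, and they are independent of each other. The initial value $Y_0=x(\delta)$ is deterministic. If the shifted dynamics satisfies the hypotheses of Theorem~\ref{thm:elliptic} or Theorem~\ref{thm:jump}, that theorem applied on horizon $T-\delta$ shows that $M:=\sup_{u\le T-\delta}Y_u$ has a density $f_M$. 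For any Borel set $A\subset(m_0,\infty)$ we have $\{\Xstar_T\in A\}=\{M\in A\}$, because on $(m_0,\infty)$ the maximum coincides with $M$. Therefore
\[
\law(\Xstar_T)=\Pbb(M\le m_0)\,\delta_{m_0}+f_M(y)\1_{(m_0,\infty)}(y)\,\dd y,
\]
which is the stated decomposition.

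The main obstacle is a precise reading of the hypothesis ``the shifted dynamics satisfies the assumptions''. Condition \eqref{eq:NDtime} must be read for $\kappa(\delta+\cdot)$, and the elliptic analogue for $\sigma(\delta+\cdot,\cdot)$. Applying the theorems to the restarted process requires the time-shift invariance of the driving noises, which is immediate, and the fact that a deterministic starting point is allowed, which is also immediate. No Malliavin argument is needed beyond invoking the earlier theorems.
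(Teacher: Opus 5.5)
Your proposal is correct and follows essentially the same route as the paper's proof. The stochastic integrals vanish on $[0,\delta]$: the compensated small-jump part by the It\^o isometry, and the large-jump part because a deterministic Lebesgue-null time set carries no Poisson points a.s. The SDE then reduces to a Lipschitz ODE, and the atom formula follows from $\Xstar_T=\max(m_0,\sup_{\delta\le s\le T}X_s)$. Your density step applies the earlier theorems to the shifted process $X_{\delta+\cdot}$ on horizon $T-\delta$ with deterministic start $x(\delta)$, together with the event identity on $(m_0,\infty)$. This only spells out what the paper states briefly and relegates to its remarks on the shifted post-$\delta$ dynamics and the atom--density decomposition.
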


\begin{remark}[On the a.e.\ formulation in Theorem~\ref{thm:silent}]
The a.e.\ formulation in Theorem~\ref{thm:silent} is sufficient, since the stochastic integrals are unchanged by modifications on $\dd t$-null sets
(and on $\dd t\otimes\nu$-null sets for the jump term); see the proof of Theorem~\ref{thm:silent} for details.
\end{remark}

\begin{remark}[Explicit atom--density decomposition]
In the setting of Theorem~\ref{thm:silent}, let $Y:=\sup_{\delta\le s\le T}X_s$.
Since $\Xstar_T=\max(m_0,Y)\ge m_0$ a.s., the law of $\Xstar_T$ is supported on $[m_0,\infty)$.
If $Y$ admits a density $f_Y$ (for instance, when the shifted post-$\delta$ dynamics satisfies the assumptions of
Theorem~\ref{thm:elliptic} or Theorem~\ref{thm:jump}), then
\[
\law(\Xstar_T)=\Pbb(Y\le m_0)\,\delta_{m_0}+ f_Y(x)\,\1_{(m_0,\infty)}(x)\dx.
\]
Here $f_Y$ denotes the Lebesgue density of $\law(Y)$ and $\delta_{m_0}$ denotes the Dirac measure at $m_0$.
(Note that $Y$ admits a density, so $\Pbb(Y=m_0)=0$, and the decomposition is well defined.)
In particular, the absolutely continuous component of $\law(\Xstar_T)$ on $(m_0,\infty)$ coincides with the law of $Y$
restricted to $(m_0,\infty)$.
\end{remark}

\begin{remark}[Example: a canonical silent interval]
In the pure-jump setting, a canonical example of a silent interval is $\kappa(t)=\1_{[\delta,T]}(t)$ and $\sigma\equiv0$.
Then $X$ evolves deterministically on $[0,\delta]$ and the running maximum satisfies
$\Xstar_T=\max\big(m_0,\sup_{\delta\le s\le T}X_s\big)$, so that the atomic mass is
\[
\Pbb(\Xstar_T=m_0)=\Pbb\Big(\sup_{\delta\le s\le T}X_s\le m_0\Big).
\]
\end{remark}

\begin{remark}[Shifted post-$\delta$ dynamics]
In Theorem~\ref{thm:silent}, the phrase ``shifted post-$\delta$ dynamics'' can be made explicit as follows.
Define $\tilde X_t:=X_{\delta+t}$ for $t\in[0,T-\delta]$, and let $\tilde W_t:=W_{\delta+t}-W_\delta$.
Moreover, define the shifted Poisson random measure $N^\delta$ by
\[
N^\delta((0,t]\times A):=N((\delta,\delta+t]\times A),\qquad t\in[0,T-\delta],\ A\subset\R_0\ \text{Borel}.
\]
Then $(\tilde W,N^\delta)$ is again a Wiener--Poisson pair and $\tilde X$ solves an SDE of the form \eqref{eq:SDE-general} on $[0,T-\delta]$
with shifted coefficients $\tilde b(t,x):=b(\delta+t,x)$, $\tilde\sigma(t,x):=\sigma(\delta+t,x)$ and $\tilde\kappa(t):=\kappa(\delta+t)$.
Accordingly, the nondegeneracy-in-time condition becomes
$\int_0^t \tilde\kappa(s)^2\,\dd s=\int_\delta^{\delta+t}\kappa(s)^2\,\dd s>0$ for all $t\in(0,T-\delta]$,
and the uniform ellipticity condition becomes $\inf_{(t,x)\in[\delta,T]\times\R}|\sigma(t,x)|\ge\sigma_0$.
\end{remark}

\section{Proof of Theorem~\ref{thm:elliptic}}\label{sec:proof-elliptic}

We verify Proposition~\ref{prop:unified} with a Brownian direction $\Theta=(h,0)$.
The proof proceeds as follows:
(i) we establish moment bounds for $X$ and its Jacobian $J$ (Lemmas~\ref{lem:apriori} and~\ref{lem:J});
(ii) we identify the Brownian directional derivative $D_\Theta X$ in a self-contained way (Lemma~\ref{lem:DX-identification});
(iii) we choose $h$ so that $D_\Theta X_t$ admits an explicit strictly positive representation (Lemma~\ref{lem:DXpos});
(iv) we check the $L^p$ bounds and right-continuity required in Proposition~\ref{prop:unified}.

\subsection{Existence, moment bounds, and the Jacobian}

We first collect the basic $L^p$ estimates needed to apply the running-maximum criterion.
In particular, we need moment bounds for $X$ to control $\Xstar_T$ and we need moment bounds for the Jacobian $J$,
which will enter both the choice of the direction $h$ and the explicit representation of $D_\Theta X$.

Under Assumption~\ref{ass:elliptic}, \eqref{eq:SDE-general} has a unique strong solution.
Fix $p\in(1,\beta)$ from Assumption~\ref{ass:Levy}.
By Lemma~\ref{lem:apriori}, $\E[\sup_{t\le T}|X_t|^p]<\infty$.

Define the first variation (Jacobian) process $J$ by
\begin{equation}\label{eq:J}
\dd J_t=\partial_x b(t,X_t)J_t\dt+\partial_x\sigma(t,X_t)J_t\,\dd W_t,\qquad J_0=1.
\end{equation}
Note that $J$ is continuous since it is driven only by $dt$ and $dW$, although $X$ itself is c\`adl\`ag.

\begin{lemma}[Positivity and moments of $J$]\label{lem:J}
Under Assumption~\ref{ass:elliptic}, $J_t>0$ a.s.\ for all $t$, and for any $q\ge 1$,
\[
\E\Big[\sup_{0\le t\le T}J_t^{q}\Big]+\E\Big[\sup_{0\le t\le T}J_t^{-q}\Big]<\infty.
\]
\end{lemma}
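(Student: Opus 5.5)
The plan is to solve the linear SDE \eqref{eq:J} explicitly and read off all claims from the resulting exponential formula. Set $a_s:=\partial_x b(s,X_s)$ and $c_s:=\partial_x\sigma(s,X_s)$. These are progressively measurable (since $X$ is adapted and c\`adl\`ag and $\partial_x b,\partial_x\sigma$ are jointly measurable), and by (E1) they satisfy $|a_s|\le K:=\|\partial_x b\|_\infty$ and $|c_s|\le K':=\|\partial_x\sigma\|_\infty$. Replacing $X_s$ by $X_{s-}$ changes nothing, because the jump times form a Lebesgue-null set, so the $\dd s$- and $\dd W$-integrals are unchanged. By It\^o's formula, the unique solution of \eqref{eq:J} is the stochastic exponential
\[
J_t=\exp\Big(\int_0^t \big(a_s-\tfrac12 c_s^2\big)\,\dd s+\int_0^t c_s\,\dd W_s\Big).
\]
Uniqueness follows because the SDE is linear with bounded coefficients. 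This representation immediately gives $J_t>0$ for all $t$ almost surely, and it shows that $J$ is continuous.

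For the moments, fix $q\in\R$; the cases $q$ and $-q$ are handled identically. Write $M_t:=\int_0^t c_s\,\dd W_s$, a continuous martingale with $\langle M\rangle_t\le K'^2T$. Then
\[
J_t^{q}=\exp\Big(q\int_0^t(a_s-\tfrac12c_s^2)\,\dd s\Big)\,e^{qM_t}\le e^{|q|(K+K'^2/2)T}\,e^{qM_t}.
\]
Hence $\sup_{t\le T}J_t^q\le C_{q,T}\sup_{t\le T}e^{qM_t}$. Since $e^{qM_t}$ is a positive submartingale, Doob's $L^2$ maximal inequality gives
\[
\E\big[\sup_{t\le T}e^{qM_t}\big]\le \Big(\E\big[\sup_{t\le T}e^{2qM_t}\big]\Big)^{1/2}\le 2\big(\E[e^{2qM_T}]\big)^{1/2}.
\]
To bound $\E[e^{2qM_T}]$, write $e^{2qM_T}=\mathcal{E}(2qM)_T\,e^{2q^2\langle M\rangle_T}\le \mathcal{E}(2qM)_T\,e^{2q^2K'^2T}$. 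The stochastic exponential $\mathcal{E}(2qM)$ is a true martingale by Novikov's criterion, since its quadratic variation is bounded, so $\E[\mathcal{E}(2qM)_T]=1$. Combining these bounds yields finiteness for every $q\ge1$ and for $-q$.

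The only point requiring care, and the main potential obstacle, is the claim that $J$ solves a well-posed equation even though $X$ jumps. The coefficients $a_s$ and $c_s$ are merely bounded progressive processes, not continuous ones. However, the existence, uniqueness and It\^o formula arguments above use only boundedness and progressive measurability. Therefore no regularity of $X$ beyond adaptedness is needed, and the independent L\'evy noise plays no role in the bounds. In particular, neither (E2) nor (E3) is required; only (E1) is used.
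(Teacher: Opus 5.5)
Your proof is correct and follows essentially the same route as the paper. Both arguments use the explicit stochastic-exponential formula for $J$, a deterministic bound on the $\dd s$-part, Doob's $L^2$ maximal inequality with Cauchy--Schwarz, and Novikov's criterion to control the squared exponential. The only cosmetic difference is that you apply Doob to the convex-function submartingale $e^{qM_t}$ with $M=\int c\,\dd W$, whose integrability follows from the same Novikov bound. The paper instead applies it to the Dol\'eans-Dade martingales $M^{\pm}$ after peeling off the compensator factor $\exp(\tfrac12 q(q\mp1)\int c_s^2\,\dd s)$.
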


\begin{proof}
Set
\[
c_t:=\partial_x\sigma(t,X_t),\qquad a_t:=\partial_x b(t,X_t).
\]
By Assumption~\ref{ass:elliptic}, $|c_t|\le \|\partial_x\sigma\|_\infty=:A$ and $|a_t|\le \|\partial_x b\|_\infty=:B$.
The linear SDE \eqref{eq:J} admits the stochastic exponential representation
\begin{equation}\label{eq:Jexp}
J_t=\exp\Big(\int_0^t c_s\,\dd W_s+\int_0^t\big(a_s-\tfrac12 c_s^2\big)\,\dd s\Big),\qquad t\in[0,T].
\end{equation}
In particular, $J_t>0$ a.s.

Fix $q\ge 1$. From \eqref{eq:Jexp} we can write
\begin{align*}
J_t^{q}
&=\underbrace{\exp\Big(q\int_0^t c_s\,\dd W_s-\tfrac12 q^2\int_0^t c_s^2\,\dd s\Big)}_{=:\,M_t^{+}}
\cdot\exp\Big(\int_0^t q a_s\,\dd s+\tfrac12 q(q-1)\int_0^t c_s^2\,\dd s\Big)\\
&\le M_t^{+}\exp\Big(qBT+\tfrac12 q(q-1)A^2T\Big),\\[4pt]
J_t^{-q}
&=\underbrace{\exp\Big(-q\int_0^t c_s\,\dd W_s-\tfrac12 q^2\int_0^t c_s^2\,\dd s\Big)}_{=:\,M_t^{-}}
\cdot\exp\Big(\int_0^t (-q) a_s\,\dd s+\tfrac12 q(q+1)\int_0^t c_s^2\,\dd s\Big)\\
&\le M_t^{-}\exp\Big(qBT+\tfrac12 q(q+1)A^2T\Big).
\end{align*}
Since $|c_s|\le A$, the Novikov condition
$\E\big[\exp(\frac12 q^2\int_0^T c_s^2\,\dd s)\big]\le \exp(\frac12 q^2 A^2 T)<\infty$
holds, so $M^{\pm}$ are true martingales.
Since $M^{\pm}$ are nonnegative martingales, $(M^{\pm})^2$ are submartingales.
By Doob's $L^2$ maximal inequality,
\[
\E\Big[\sup_{0\le t\le T}(M_t^{+})^2\Big]\le 4\,\E[(M_T^{+})^2],\qquad
\E\Big[\sup_{0\le t\le T}(M_t^{-})^2\Big]\le 4\,\E[(M_T^{-})^2].
\]
Using the Cauchy--Schwarz inequality,
\[
\E\Big[\sup_{0\le t\le T}M_t^{\pm}\Big]
\le \E\Big[\sup_{0\le t\le T}(M_t^{\pm})^2\Big]^{1/2}
\le 2\,\E[(M_T^{\pm})^2]^{1/2}.
\]
Finally, we bound $\E[(M_T^{+})^2]$.
Here $\mathcal{E}(\cdot)$ denotes the Dol\'eans-Dade stochastic exponential.
Writing $(M_T^{+})^2=\mathcal{E}(2q\,c\cdot W)_T\cdot\exp(q^2\int_0^T c_s^2\,\dd s)$
and using $|c_s|\le A$, we obtain
\[
(M_T^{+})^2\le \mathcal{E}(2q\,c\cdot W)_T\cdot\exp(q^2 A^2 T).
\]
Since $|c_s|\le A$, the Novikov condition
$\E[\exp(2q^2\int_0^T c_s^2\,\dd s)]\le \exp(2q^2 A^2 T)<\infty$
holds, so $\mathcal{E}(2q\,c\cdot W)$ is a true martingale with $\E[\mathcal{E}(2q\,c\cdot W)_T]=1$.
Therefore
\[
\E[(M_T^{+})^2]\le \exp(q^2 A^2 T)<\infty,
\]
and the same bound holds for $\E[(M_T^{-})^2]$.
Hence $\E[\sup_{t\le T}M_t^{\pm}]<\infty$.

Since $J_t^q\le M_t^+\cdot C_1$ and $J_t^{-q}\le M_t^-\cdot C_2$ with deterministic constants
$C_1:=\exp(qBT+\frac12 q(q-1)A^2T)$ and $C_2:=\exp(qBT+\frac12 q(q+1)A^2T)$,
\[
\E\Big[\sup_{0\le t\le T}J_t^{q}\Big]
\le C_1\,\E\Big[\sup_{0\le t\le T}M_t^{+}\Big]<\infty,
\qquad
\E\Big[\sup_{0\le t\le T}J_t^{-q}\Big]
\le C_2\,\E\Big[\sup_{0\le t\le T}M_t^{-}\Big]<\infty.
\]
This proves the claim.
\end{proof}

\subsection{A strictly positive Brownian direction}

We now choose an adapted Brownian direction $h$ that aligns with the sign of $\sigma(t,X_t)$ and compensates the Jacobian.
This makes the forcing term $\sigma(t,X_t)h(t)$ in the linearized equation nonnegative and yields a strictly positive representation
of $D_\Theta X_t$ through a variation-of-constants argument.

Let $\Theta=(h,0)$ and choose
\begin{equation}\label{eq:hchoice}
h(t):=\frac{\sigma(t,X_t)}{|\sigma(t,X_t)|}\,J_t^{-1},\qquad t\in[0,T].
\end{equation}

\medskip
For later reference, we record the linearized equation associated with a Brownian direction $\Theta=(h,0)$.
When the directional derivative exists, it satisfies
\begin{equation}\label{eq:linearized}
\begin{aligned}
D_\Theta X_t
&=\int_0^t \partial_x b(s,X_s)\,D_\Theta X_s\,\dd s
+\int_0^t \partial_x\sigma(s,X_s)\,D_\Theta X_s\,\dd W_s\\
&\quad+\int_0^t \sigma(s,X_s)h(s)\,\dd s,\qquad t\in[0,T].
\end{aligned}
\end{equation}
We will justify \eqref{eq:linearized} in Lemma~\ref{lem:DX-identification}.

\begin{lemma}[Directional derivative and positivity]\label{lem:DXpos}
With $\Theta$ as above, $h\in \Hb_{\infty-}$ and the directional derivative $D_\Theta X_t$ exists, admits
a continuous version, and satisfies
\[
\Pbb\big(D_\Theta X_t>0,\ \forall t\in(0,T]\big)=1.
\]
\end{lemma}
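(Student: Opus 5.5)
The proof has three steps: show that $h$ is admissible, solve the linearized equation by variation of constants, and read off positivity.

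\emph{Admissibility of $h$.} By (E2), $\sigma(t,X_t)\neq 0$, so $\operatorname{sgn}\sigma(t,X_t)\in\{\pm1\}$ is well defined and $|h(t)|=J_t^{-1}$. The process $h$ is adapted because $X$ and $J$ are. It is measurable because $\sigma$ is measurable in $t$ and continuous in $x$, and $X$ is c\`adl\`ag. For any $p\ge1$,
\[
\E\Big[\Big(\int_0^T h(s)^2\,\dd s\Big)^{p/2}\Big]\le T^{p/2}\,\E\Big[\sup_{t\le T}J_t^{-p}\Big]<\infty
\]
by Lemma~\ref{lem:J}. Hence $h\in\Hb_{\infty-}$.

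\emph{Existence and the explicit formula.} I take existence of $D_\Theta X_t$ and the fact that it solves the linearized equation \eqref{eq:linearized} from Lemma~\ref{lem:DX-identification}, which the paper cites for exactly this purpose. Equation \eqref{eq:linearized} is a linear SDE with bounded coefficients $\partial_x b$ and $\partial_x\sigma$ and an $L^p$ forcing term $\sigma(s,X_s)h(s)$. Its solution is therefore pathwise unique, continuous, and has no jump contribution, because $\Theta$ has no jump component. I would verify by It\^o's formula that
\begin{equation}\label{eq:DX-formula}
D_\Theta X_t=J_t\int_0^t J_s^{-1}\sigma(s,X_s)h(s)\,\dd s=J_t\int_0^t |\sigma(s,X_s)|\,J_s^{-2}\,\dd s .
\end{equation}
Since $J$ is a continuous semimartingale, $d(J_tA_t)=A_t\,dJ_t+J_t\,dA_t$ for the finite-variation process $A_t=\int_0^t J_s^{-1}\sigma h\,\dd s$, and this reproduces \eqref{eq:linearized}. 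Uniqueness then identifies $D_\Theta X$ with \eqref{eq:DX-formula}. The right-hand side is continuous in $t$, which gives the continuous version.

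\emph{Positivity.} On the full-probability event where $J$ is continuous and strictly positive (Lemma~\ref{lem:J}), (E2) gives, for every $t\in(0,T]$ simultaneously,
\[
D_\Theta X_t\ \ge\ \sigma_0\,J_t\int_0^t J_s^{-2}\,\dd s>0 ,
\]
because $J_s^{-2}$ is continuous and strictly positive on $[0,t]$. Since the bound holds for all $t$ at once on a single event, no union over uncountably many $t$ is needed.

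The main obstacle is the rigorous identification of the directional derivative of the strong solution with the solution of \eqref{eq:linearized} when the direction $h$ is random and depends on $X$ itself. I defer that to Lemma~\ref{lem:DX-identification}, whose intended proof is a Picard iteration in $W^{1,p}_\Theta$ using closedness of $D_\Theta$. The only subtlety remaining here is that $\sigma h$ must have enough integrability for the variation-of-constants formula. By (E3) and Lemma~\ref{lem:J} it lies in every $L^p$, which suffices.
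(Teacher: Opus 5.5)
Your proposal is correct and follows the paper's proof essentially step for step: admissibility of $h$ via $|h|=J^{-1}$ and Lemma~\ref{lem:J}, then existence and the linearized equation from Lemma~\ref{lem:DX-identification}, then the variation-of-constants formula \eqref{eq:DX-formula}, and finally positivity from $J>0$ and (E2) on a single full-measure event. The differences are minor. You verify the formula forward, via the product rule for $J_tA_t$ plus uniqueness (as the paper itself does in Step~3 of Lemma~\ref{lem:DX-identification}), rather than applying It\^o's formula to $J_t^{-1}D_\Theta X_t$. Also, the paper proves Lemma~\ref{lem:DX-identification} through the Wiener--Malliavin pairing $\int_0^T D^W_s X_t\,h(s)\,\dd s$ rather than the Picard iteration you anticipate, which does not affect your argument here.
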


\begin{proof}
We split the argument into four steps.

\medskip
\noindent\emph{Step 1: $h\in \Hb_{\infty-}$.}
By uniform ellipticity, $|\sigma(t,X_t)|\ge \sigma_0>0$ for all $t$, so $|h(t)|=J_t^{-1}$.
For any $r\ge 1$ we therefore have
\[
\E\Big[\Big(\int_0^T |h(s)|^2\,\dd s\Big)^{r/2}\Big]
=\E\Big[\Big(\int_0^T J_s^{-2}\,\dd s\Big)^{r/2}\Big]
\le T^{r/2}\E\Big[\sup_{0\le s\le T}J_s^{-r}\Big]<\infty
\]
by Lemma~\ref{lem:J}. Hence $h\in \Hb_r$ for all $r\ge 1$, i.e., $h\in \Hb_{\infty-}$.

\medskip
\noindent\emph{Step 2: existence of $D_\Theta X$ and the linearized equation.}
Fix $p\in(1,\beta)$.
Since $h\in \Hb_{\infty-}$ by Step~1, we may apply Lemma~\ref{lem:DX-identification} with $\Theta=(h,0)$.
It yields that for every $t\in[0,T]$ we have $X_t\in W^{1,p}_\Theta$ and that $D_\Theta X$ is the unique adapted solution to the linearized equation~\eqref{eq:linearized}.
In particular, $D_\Theta X$ admits a continuous version.

\medskip
\noindent\emph{Step 3: variation of constants and positivity.}
Let $J$ be the Jacobian defined by \eqref{eq:J}. Recall from Step~2 that $D_\Theta X$ has continuous paths; since $J$ is also continuous, $J^{-1}D_\Theta X$ is a continuous semimartingale.
Applying It\^o's formula to $J_t^{-1}D_\Theta X_t$ (equivalently, using the standard variation-of-constants formula for linear SDEs), we obtain from \eqref{eq:linearized} that
\begin{equation}\label{eq:DX-formula}
D_\Theta X_t
=J_t\int_0^t J_s^{-1}\sigma(s,X_s)h(s)\,\dd s
=J_t\int_0^t |\sigma(s,X_s)|\,J_s^{-2}\,\dd s,\qquad t\in[0,T].
\end{equation}
By Lemma~\ref{lem:J}, $J_t>0$ for all $t$ a.s., and by ellipticity $|\sigma(s,X_s)|\ge\sigma_0$.
Therefore $D_\Theta X_t>0$ for all $t\in(0,T]$ a.s.

\medskip
\noindent\emph{Step 4: $L^p$-integrability of $D_\Theta X$.}
Under (E3), $\|\sigma\|_\infty<\infty$, so by the representation above,
\[
\sup_{0\le t\le T}|D_\Theta X_t|
\le \|\sigma\|_\infty\;T\;\sup_{0\le t\le T}J_t\;\sup_{0\le s\le T}J_s^{-2}.
\]
Lemma~\ref{lem:J} gives finite moments of $\sup_{t\le T}J_t$ and $\sup_{t\le T}J_t^{-1}$ of all orders, and thus
\[
\E\Big[\sup_{0\le t\le T}|D_\Theta X_t|^p\Big]<\infty,\qquad \forall\,p\ge1.
\]
This completes the proof.
\end{proof}

\subsection{Conclusion via the running-maximum criterion}

Having established strict positivity of $D_\Theta X_t$ and the required $L^p$ bounds, we can now apply the criterion with the constant approximation $X^{(n)}\equiv X$.

\begin{proof}[Proof of Theorem~\ref{thm:elliptic}]
Fix $p\in(1,\beta)$.
By Lemma~\ref{lem:apriori} we have $\E[|\Xstar_T|^p]\le \E[\sup_{t\le T}|X_t|^p]<\infty$.
With $\Theta=(h,0)$ from \eqref{eq:hchoice}, Lemma~\ref{lem:DXpos} and Lemma~\ref{lem:DX-identification} yield that $X_t\in W^{1,p}_\Theta$
for all $t\in[0,T]$ and that $D_\Theta X$ admits a continuous version solving \eqref{eq:linearized}.
Moreover, Step~4 of the proof of Lemma~\ref{lem:DXpos} shows that $\E[\sup_{t\le T}|D_\Theta X_t|^p]<\infty$.
Thus conditions (i)--(iii) of Lemma~\ref{lem:criterion} hold with the constant approximation sequence $X^{(n)}\equiv X$.
Finally, Lemma~\ref{lem:DXpos} implies that $D_\Theta X_t>0$ for all $t\in(0,T]$ a.s., hence
\[
\Pbb\big(D_\Theta X_t\neq 0 \text{ on }\{t\in(0,T]: X_t=\Xstar_T\}\big)=1.
\]
Therefore Lemma~\ref{lem:criterion} applies and $\law(\Xstar_T)$ is absolutely continuous.
\end{proof}

\section{Proof of Theorem~\ref{thm:jump}}

We verify Proposition~\ref{prop:unified} in a purely jump direction $\Theta=(0,v)$.
Compared with \cite{NakagawaSuzuki2024}, the main difference is that the jump weight $\kappa(t)$ may vary in time and may vanish on subintervals.
Our argument has three main steps:
(i) we construct an admissible jump direction $v$ and obtain an explicit representation of $D_\Theta X$ (Lemmas~\ref{lem:v-admissible} and~\ref{lem:DX-jump-expression});
(ii) we prove a weighted-Poisson strict-positivity lemma which yields $D_\Theta X_t>0$ for all $t>0$ (Lemma~\ref{lem:poisson-positive});
(iii) we establish the moment and Sobolev bounds needed to apply Lemma~\ref{lem:criterion} (Lemmas~\ref{lem:jump-moment} and~\ref{lem:jump-differentiability}).

\subsection{Choice of direction and expression for the directional derivative}

The construction of $v$ has two objectives.
First, $v$ must be admissible in the sense of $\Vb_{\infty-}$, which dictates integrability and differentiability conditions in the jump-size variable $z$.
Second, we want the product $\kappa(t)v(t,z)$ appearing in the linearized equation for $D_\Theta X$ to be nonnegative and to vanish only when $\kappa(t)=0$ or $\eta(z)=0$.
The deterministic cut-off $\eta$ in \eqref{eq:eta-choice} achieves both.

Let $\eta:\R\to[0,\infty)$ be a $C^1$ function supported in $\{|z|\le 1\}$ such that
\begin{equation}\label{eq:eta-choice}
\eta(z)=|z|^4\ \text{ for }\ |z|\le \tfrac12,\qquad \eta(z)>0\ \text{ for all }\ 0<|z|<1.
\end{equation}

\begin{remark}[On the choice of $\eta$]
The concrete choice \eqref{eq:eta-choice} is made for convenience.
In the definition of the admissible space $\Vb_p$ one may take $\Gamma=(-2,2)$, so that on $\{|z|\le 1\}$ the weight becomes
$\varrho(z)=1\vee |z|^{-1}$ (see Lemma~\ref{lem:v-admissible} below).
Here and below, $L^p(\nu)$ stands for $L^p(\R_0,\nu)$. The bound $\eta(z)=O(|z|^4)$ near $0$ then guarantees that $\eta'\in L^p(\nu)$ and $\eta\,\varrho\in L^p(\nu)$ for every $p>1$
under Assumption~\ref{ass:Levy}\,(L1).
More generally, any nonnegative $C^1$ function $\eta$ supported in $\{|z|\le 1\}$ such that
$\nu(\{z:\eta(z)>0\})=\infty$ and
\[
\int_{\R_0}\big(|\eta'(z)|^p+|\eta(z)|^p\varrho(z)^p\big)\,\nu(\dd z)<\infty
\]
(for the chosen $p>1$) can be used in the same construction.
For instance, any smooth cut-off of $|z|^k$ near $0$ with $k\ge 3$ meets these requirements under (L1).
\end{remark}

For $t\in[0,T]$ define
\[
v(t,z):=\kappa(t)\exp\Big(-\int_0^t \partial_x b(s,X_s)\,\dd s\Big)\eta(z),
\qquad \Theta:=(0,v).
\]
\begin{lemma}[Admissibility of the jump direction]\label{lem:v-admissible}
Under Assumptions~\ref{ass:Levy} and~\ref{ass:jump}, the function $v$ defined above belongs to $\Vb_{\infty-}$.
\end{lemma}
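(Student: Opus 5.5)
The plan is to factor $v$ as a product of a bounded, predictable time factor and a deterministic function of the jump size. Then we check the $\Vb_p$ norm of \cite[Section~3]{NakagawaSuzuki2024} directly. Write
\[
v(t,z)=\phi_t\,\eta(z),\qquad \phi_t:=\kappa(t)\exp\Big(-\int_0^t \partial_x b(s,X_s)\,\dd s\Big).
\]

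\textbf{Step 1: the time factor.} $\phi$ is continuous in $t$ up to the deterministic factor $\kappa$, and it is adapted. Since $s\mapsto X_s$ is c\`adl\`ag and we integrate $\dd s$, the integral $\int_0^t\partial_x b(s,X_s)\,\dd s$ is continuous and adapted, hence predictable. Multiplying by the deterministic measurable $\kappa$ keeps predictability. By (J1) and (J2) we get the deterministic bound
\[
|\phi_t|\le \|\kappa\|_\infty e^{\|\partial_x b\|_\infty T}=:K \quad\text{for all } t,\ \text{a.s.}
\]
This boundedness is what makes every moment finite at once.

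\textbf{Step 2: the norm.} I recall that, for the one-dimensional jump direction (with $\Gamma=(-2,2)$, so that $\varrho(z)=1\vee|z|^{-1}$ on $\{|z|\le1\}$), the $\Vb_p$-norm is controlled by
\[
\E\Big[\Big(\int_0^T\!\!\int_{\R_0}\big(|\partial_z v|^p+|v|^p\varrho^p\big)\,\nu(\dd z)\,\dd t\Big)\Big]^{1/p},
\]
possibly together with analogous $L^2$- or $L^1$-type terms in $(t,z)$ and a $\Gamma$-support condition. I will state this explicitly as the recalled expression.

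Since $\partial_z v=\phi_t\eta'(z)$, each term is bounded by $K^pT$ times $\int(|\eta'|^p+|\eta|^p\varrho^p)\,\dd\nu$. The support condition holds because $\supp\eta\subset\{|z|\le1\}\subset\Gamma$.

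\textbf{Step 3: the $\nu$-integrals.} This is the only analytic input, and I expect it to be the main (though mild) obstacle. We need $\int(|\eta'|^p+|\eta|^p\varrho^p)\,\dd\nu<\infty$ for every $p>1$, together with any $p=1,2$ variants the norm requires.

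On $\{|z|\le\frac12\}$ we have
\[
|\eta'(z)|=4|z|^3,\qquad |\eta(z)|\varrho(z)=|z|^3 .
\]
Hence both integrands are $\lesssim |z|^{3p}\le |z|^2$ for $p\ge 2/3$ and $|z|\le1$, which is integrable by (L1).

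On $\{\frac12\le|z|\le1\}$, $\eta$ and $\eta'$ are bounded, $\varrho\le 2$, and $\nu$ of this annulus is finite by (L1). Outside $\{|z|\le 1\}$ everything vanishes.

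Since the bounds hold for all $p\ge1$, we get $v\in\Vb_p$ for every $p$, hence $v\in\Vb_{\infty-}$. If the precise norm in \cite{NakagawaSuzuki2024} also involves $\varrho$ in the derivative term, the extra factor $|z|^{-1}$ still leaves $|z|^{2p}\le|z|^2$, so the argument is unchanged.
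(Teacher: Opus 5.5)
Your proposal is correct and follows essentially the same route as the paper. You factor $v=\phi_t\,\eta(z)$ with a bounded predictable time factor, then reduce admissibility to the $\nu$-integrability of $\eta'$ and $\eta\varrho$, using $\eta(z)=|z|^4$ near the origin, $\varrho=1\vee|z|^{-1}$ on $\{|z|\le1\}$, and (L1).

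One clarification on the norm you hedged about: the paper uses exactly $\|\partial_z v\|_{L_p^1}+\|v\,\varrho\|_{L_p^1}$. Here the $L_p^1$-norm combines an $L^p(\Omega;L^1(\dd t\otimes\nu))$ term with an $L^p(\Omega\times[0,T]\times\R_0)$ term. Your remark that $|z|^{3p}\le|z|^2$ holds also at $p=1$ is precisely what handles the first of these.
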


\begin{proof}
The field $v$ is predictable since $t\mapsto \int_0^t \partial_x b(s,X_s)\,\dd s$ is adapted and continuous and $\eta$ is deterministic.
Recall from \cite[Section~3]{NakagawaSuzuki2024} that, in dimension one and for $p>1$,
\[
\|v\|_{\Vb_p}=\|\partial_z v\|_{L_p^1}+\|v\,\varrho\|_{L_p^1},
\]
where for a predictable field $f$ we set
\[
\|f\|_{L_p^1}
:=\Big(\E\Big[\Big(\int_0^T\!\int_{\R_0} |f(t,z)|\,\nu(\dd z)\,\dd t\Big)^{p}\Big]\Big)^{1/p}
+\Big(\E\Big[\int_0^T\!\int_{\R_0} |f(t,z)|^p\,\nu(\dd z)\,\dd t\Big]\Big)^{1/p}.
\]
The weight is given by $\varrho(z)=1\vee d(z,\Gamma_0^{\,c})^{-1}$ for some open set $\Gamma\subset\R$ containing the origin,
where $d(z,B):=\inf_{y\in B}|z-y|$ denotes the Euclidean distance and $\Gamma_0:=\Gamma\setminus\{0\}$. We may take $\Gamma=(-2,2)$, so that on $\{|z|\le 1\}$ we have
$\varrho(z)=1\vee |z|^{-1}$.

Set
\[
A_t:=\kappa(t)\exp\Big(-\int_0^t \partial_x b(s,X_s)\,\dd s\Big),\qquad t\in[0,T].
\]
By Assumption~\ref{ass:jump}, $\sup_{t\le T}|A_t|\le C_A$ for a deterministic constant $C_A<\infty$.
Since $v(t,z)=A_t\eta(z)$ and $\partial_z v(t,z)=A_t\eta'(z)$, it suffices to check the integrability of
$\eta'$ and $\eta\,\varrho$ against $\nu$.

By \eqref{eq:eta-choice}, for $|z|\le \tfrac12$,
\[
|\eta(z)|\,\varrho(z)\le |z|^4\cdot |z|^{-1}=|z|^3,\qquad |\eta'(z)|\le C|z|^3,
\]
while on $\tfrac12\le |z|\le 1$ the functions $\eta$, $\eta'$ and $\varrho$ are bounded.
Assumption~\ref{ass:Levy}\,(L1) implies $\int_{|z|\le 1}|z|^2\,\nu(\dd z)<\infty$ and also $\nu(\{|z|\ge\tfrac12\})<\infty$ (indeed, $1\wedge |z|^2\ge 1/4$ on $\{|z|\ge 1/2\}$, hence $\nu(\{|z|\ge 1/2\})\le 4\int_{\R_0}(1\wedge |z|^2)\,\nu(\dd z)<\infty$).
Hence,
\[
\int_{\R_0}\big(|\eta'(z)|+|\eta(z)|\,\varrho(z)\big)\,\nu(\dd z)<\infty,
\qquad
\int_{\R_0}\big(|\eta'(z)|^p+|\eta(z)|^p\,\varrho(z)^p\big)\,\nu(\dd z)<\infty,
\]
since on $|z|\le 1$ we have $|z|^{3p}\le |z|^2$ for every $p>1$ (since $3p>2$).
Therefore $\partial_z v\in L_p^1$ and $v\,\varrho\in L_p^1$, i.e., $v\in \Vb_p$ for every $p>1$.
Equivalently, $v\in \Vb_{\infty-}$.
\end{proof}

\begin{lemma}[Directional differentiability of \eqref{eq:SDE-jump} with a bounded time-dependent weight]\label{lem:jump-differentiability}
Assume that Assumptions~\ref{ass:Levy} and~\ref{ass:jump} hold, and fix $p\in(1,\beta)$.
Let $\Theta=(0,v)$ with $v$ defined above.
Then, for every $t\in[0,T]$, $X_t\in W^{1,p}_\Theta$ and the directional derivative $D_\Theta X$ admits a c\`adl\`ag version satisfying
\begin{equation}\label{eq:DXjump-linear}
D_\Theta X_t
=\int_0^t \partial_x b(s,X_s)\,D_\Theta X_s\,\dd s
+\int_0^t\int_{\R_0}\kappa(s)\,v(s,z)\,N(\dd s,\dd z),\qquad D_\Theta X_0=0.
\end{equation}
Moreover, in all $L^p$-estimates in the proof, the coefficient $\kappa$ enters only through $\|\kappa\|_\infty$.
\end{lemma}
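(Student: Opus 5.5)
The plan is to follow the truncation strategy of \cite{NakagawaSuzuki2024}, adapted to the time-dependent weight $\kappa$ and the time-dependent drift. For $n\ge1$, let $L^{(n)}$ be the process obtained from \eqref{eq:LevyIto} by keeping only jumps with $1/n\le|z|\le n$. This is a compound Poisson process with finitely many jumps on $[0,T]$. Let $X^{(n)}$ be the solution of \eqref{eq:SDE-jump} driven by $\kappa(t)\,\dd L^{(n)}_t$. Between jumps $X^{(n)}$ solves an ODE, and at each jump time $\tau_i$ it moves by $\kappa(\tau_i)Z_i$. So $X^{(n)}_t$ is an explicit composition of deterministic flows $\phi_{s,t}$ of $\dot x=b(t,x)$ and of the jump sizes $Z_i$. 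Since $\Theta=(0,v)$ transports only the jump sizes, the Bismut perturbation replaces each $Z_i$ by $Z_i^\varepsilon$ with $\partial_\varepsilon Z_i^\varepsilon|_{\varepsilon=0}=v(\tau_i,Z_i)$, where $v$ is predictable and $\eta$ is supported in $\{|z|\le1\}$. Differentiating the composition at $\varepsilon=0$, using $C^1$-dependence of $\phi_{s,t}$ on the initial point with $\partial_x\phi_{s,t}(y)=\exp(\int_s^t\partial_x b(r,\phi_{s,r}(y))\dd r)$, gives $X^{(n)}_t\in W^{1,p}_\Theta$ (via the cylinder-functional definition and smooth approximation, as in \cite[Section~3]{NakagawaSuzuki2024}). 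It also gives the explicit formula
\[
D_\Theta X^{(n)}_t=\sum_{\tau_i\le t}\exp\Big(\int_{\tau_i}^t\partial_x b(s,X^{(n)}_s)\dd s\Big)\kappa(\tau_i)v^{(n)}(\tau_i,Z_i),
\]
which equals the solution of the linear equation \eqref{eq:DXjump-linear} with $N$ replaced by $N^{(n)}$. A subtle point is that $v$ involves $X$, so I would use its analogue $v^{(n)}$ built from $X^{(n)}$. Alternatively, I would keep $v$ fixed and note that the derivative of $v$ along $\Theta$ enters only through predictability, i.e.\ through jumps strictly before $\tau_i$. I would follow whatever convention \cite{NakagawaSuzuki2024} uses.

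Next come the uniform bounds needed for Lemma~\ref{lem:criterion}\,(i). Since $|\partial_x b|\le\|\partial_x b\|_\infty$ and $|v|\le C_A\eta$, the formula gives
\[
\sup_{t\le T}|D_\Theta X^{(n)}_t|\le e^{T\|\partial_x b\|_\infty}\|\kappa\|_\infty C_A\int_0^T\!\!\int_{\R_0}\eta(z)N(\dd s,\dd z).
\]
The right-hand side has moments of all orders, because $\eta\in L^1(\nu)\cap L^q(\nu)$ for all $q\ge1$ (as in Lemma~\ref{lem:v-admissible}) and Poisson integrals of such functions have all moments by the Kunita/BDG inequalities. For $\sup_n\E\sup_t|X^{(n)}_t|^p$ and for $\E\sup_t|X^{(n)}_t-X_t|^p\to0$, I would use Gronwall on the difference together with BDG for the compensated small-jump part. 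Here (L2) gives the tail $\int_{|z|>n}|z|^p\nu\to0$, and (L3) controls the compensator drift $\int_{1\le|z|\le n}z\,\nu(\dd z)$. The drift bounds use the Lipschitz property of $b(t,\cdot)$, uniform in $t$. In all of these estimates $\kappa$ appears only as a multiplier of the noise and is bounded by $\|\kappa\|_\infty$, which gives the final sentence of the lemma.

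Finally, I would pass to the limit. The sequence $D_\Theta X^{(n)}_t$ converges in $L^p$ to the right-hand expression with $N$ in place of $N^{(n)}$. This holds because $\int\!\!\int_{|z|<1/n}\eta\,N\to0$ in $L^p$ (since $\eta\in L^1\cap L^p(\nu)$) and because the exponential factors converge by dominated convergence, using $X^{(n)}\to X$ uniformly in probability and continuity of $\partial_x b(s,\cdot)$. Closedness of $D_\Theta$ on $W^{1,p}_\Theta$ then gives $X_t\in W^{1,p}_\Theta$ with this derivative. I would use either the weak compactness argument of \cite[Lemma~4.1]{NakagawaSuzuki2024} or closability directly. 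The limit is c\`adl\`ag, being a finite-variation Poisson integral against a continuous exponential weight. Itô's product rule then shows that it solves \eqref{eq:DXjump-linear}. I expect the main obstacle to be this identification step, specifically the interplay between the $X$-dependence of $v$ and the approximating directions, and the justification that $D_\Theta$ commutes with the limit. For that I would rely on the closability and weak-compactness framework of \cite{NakagawaSuzuki2024} rather than reprove it.
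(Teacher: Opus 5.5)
\textbf{Gap: the small-jump cut-off at $|z|=1/n$.} Your $L^{(n)}$ keeps only jumps with $1/n\le|z|\le n$, so $X^{(n)}_t$ depends on $N$ through the indicator $\1_{\{|z|\ge 1/n\}}$. The direction, however, is fixed: $v(t,z)=A_t\eta(z)$ with $\eta(\pm\tfrac1n)>0$ for $n\ge2$. The Bismut perturbation therefore transports jump sizes across the level $|z|=1/n$. Your $X^{(n)}_t$ is differentiable pathwise, because a.s.\ no atom of $N$ lies on $\{|z|=1/n\}$. But, exactly as for $\1_{\{W_1>0\}}$ on Wiener space, pathwise differentiability does not put it in $W^{1,p}_\Theta$ with the pathwise derivative.

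Concretely, take $b\equiv0$ and $\kappa\equiv1$, so that $v=\eta$. Set $\delta(v):=-\int_0^T\!\int_{\R_0}c(z)^{-1}\partial_z(\eta c)(z)\,\tilde N(\dd s,\dd z)$. The Bismut integration-by-parts formula $\E[D_\Theta G]=\E[G\,\delta(v)]$ holds for cylinder functionals $G$, and this fixes the sign of $\delta$. It extends to all of $W^{1,p}_\Theta$ by closure, since (L4) gives $\delta(v)$ moments of all orders. Up to a deterministic constant and a large-jump term that contributes nothing to either side, $X^{(n)}_t$ equals $F:=\int_0^t\!\int_{1/n\le|z|\le1}z\,N(\dd s,\dd z)$. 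Integrating by parts in $z$ gives
\[
\E[F\,\delta(v)]=t\int_{1/n\le|z|\le1}\eta\,\dd\nu+\frac tn\Big(\eta(\tfrac1n)c(\tfrac1n)+\eta(-\tfrac1n)c(-\tfrac1n)\Big),
\]
whereas your explicit formula gives $\E[D_\Theta X^{(n)}_t]=t\int_{1/n\le|z|\le1}\eta\,\dd\nu$. So $X^{(n)}_t$ is not in $W^{1,p}_\Theta$ with the derivative you write down. Neither the closability step nor the weak-compactness argument of \cite[Lemma~4.1]{NakagawaSuzuki2024} then has admissible pairs $(X^{(n)}_t,D_\Theta X^{(n)}_t)$ to start from. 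The boundary term does tend to $0$ as $n\to\infty$, but that does not help: membership is needed for every $n$.

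\textbf{How the paper avoids this.} It truncates only the \emph{large} jumps, $L^{(n)}_t=\int_0^t\!\int_{|z|\le1}z\,\tilde N(\dd s,\dd z)+\int_0^t\!\int_{1<|z|\le n}z\,N(\dd s,\dd z)$, and keeps the infinite-activity small-jump part. The cut-off level $n\ge1$ lies outside $\supp\eta\subset\{|z|\le1\}$, where $v$ vanishes, so $D_\Theta$ does not see it. Membership $X^{(n)}_t\in W^{1,p}_\Theta$ is then taken from \cite[Lemma~5.1]{NakagawaSuzuki2024}. Moreover, the forcing $C_t=\int_0^t\!\int_{\R_0}\kappa(s)v(s,z)\,N(\dd s,\dd z)$ is literally the same for every $n$, which makes the uniform bound and the passage to the limit straightforward. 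The price is that $X^{(n)}$ still has infinitely many jumps. Your finite composition of flows is therefore unavailable, and differentiability of each truncated model has to come from that cited lemma.

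\textbf{The choice of direction.} Drop your first option of building $v^{(n)}$ from $X^{(n)}$. Lemma~\ref{lem:criterion} and the closability argument both require a single fixed $\Theta$, so that all pairs lie in the graph of one operator. The paper uses the fixed $v$ built from $X$, which is your second option.
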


\begin{proof}
We adapt the large-jump truncation argument of \cite[Section~5]{NakagawaSuzuki2024} and record the only place where the constancy of the jump coefficient is used.

\medskip
\noindent\emph{Step 1: truncation of large jumps.}
For $n\ge1$ define the truncated L\'evy process
\[
L^{(n)}_t:=\int_0^t\int_{|z|\le1} z\,\tilde N(\dd s,\dd z)+\int_0^t\int_{1<|z|\le n} z\,N(\dd s,\dd z),
\qquad t\in[0,T],
\]
and let $X^{(n)}$ be the unique strong solution of
\begin{equation}\label{eq:SDE-jump-trunc}
\dd X^{(n)}_t=b(t,X^{(n)}_t)\dt+\kappa(t)\,\dd L^{(n)}_t,\qquad X^{(n)}_0=x.
\end{equation}
Since $b$ is globally Lipschitz ($\|\partial_x b\|_\infty<\infty$) and $\kappa$ is bounded, these solutions exist and are c\`adl\`ag.

\medskip
\noindent\emph{Step 2: Malliavin differentiability for each truncated model.}
Since $L^{(n)}$ has jump sizes bounded by $n$ and only finitely many jumps on $[0,T]$ with $|z|>1$,
the proof of \cite[Lemma~5.1]{NakagawaSuzuki2024} (which itself builds on the truncation argument of \cite[Lemma~4.3]{SongXie2018}) applies for our fixed $p\in(1,\beta)$ and yields,
for each $n\ge1$ and every $\Theta\in \Hb_{\infty-}\times \Vb_{\infty-}$,
that $X^{(n)}_t\in W^{1,p}_\Theta$ and
\begin{equation}\label{eq:DXjump-n}
D_\Theta X^{(n)}_t
=\int_0^t \partial_x b(s,X^{(n)}_s)\,D_\Theta X^{(n)}_s\,\dd s
+\int_0^t\int_{0<|z|\le n}\kappa(s)\,v(s,z)\,N(\dd s,\dd z),\qquad D_\Theta X^{(n)}_0=0.
\end{equation}
In those arguments the jump coefficient enters only through moment bounds of Poisson integrals.
Therefore, replacing a constant $\sigma_2$ by a bounded deterministic function $\kappa(\cdot)$ only changes the estimates through the uniform bound $|\kappa(s)|\le\|\kappa\|_\infty$.
Moreover, the direction may be random: the proof of \cite[Lemma~5.1]{NakagawaSuzuki2024} requires only that $v$ be predictable and belong to $\Vb_p$.
Our chosen field $v$ is predictable and lies in $\Vb_{\infty-}$ by Lemma~\ref{lem:v-admissible}.

\medskip
\noindent\emph{Step 3: uniform $L^p$-bound for $D_\Theta X^{(n)}$.}
Let $K:=\|\partial_x b\|_\infty$ and set
\[
C_t:=\int_0^t\int_{\R_0}\kappa(s)\,v(s,z)\,N(\dd s,\dd z).
\]
Note that $C_t$ depends on the original solution $X$ (through the direction $v$) but not on the truncation index $n$, since $v(t,\cdot)$ is supported in $\{|z|\le1\}$.
Since $\kappa(s)\,v(s,z)=\kappa(s)^2\exp(-\int_0^s \partial_x b(u,X_u)\,\dd u)\,\eta(z)\ge 0$, we have $C_t\ge 0$ for all $t$.
Moreover, since $v(t,\cdot)$ is supported in $\{|z|\le1\}$, for every $n\ge1$,
\[
\int_0^t\int_{0<|z|\le n}\kappa(s)\,v(s,z)\,N(\dd s,\dd z)=C_t,\qquad t\in[0,T].
\]
Thus \eqref{eq:DXjump-n} reads $D_\Theta X^{(n)}_t=\int_0^t \partial_x b(s,X^{(n)}_s)D_\Theta X^{(n)}_s\,\dd s+C_t$.
Gronwall's inequality gives the pathwise bound
\[
\sup_{0\le t\le T}|D_\Theta X^{(n)}_t|
\le e^{KT}\sup_{0\le t\le T}|C_t|
\le e^{KT}\int_0^T\int_{\R_0}|\kappa(s)|\,|v(s,z)|\,N(\dd s,\dd z).
\]
Since $C_t\ge 0$ is nondecreasing in $t$, we have $\sup_{0\le t\le T}|C_t|=C_T$.
We decompose $C_T$ into its compensator and compensated parts:
\[
C_T=\int_0^T\int_{\R_0}\kappa(s)v(s,z)\,\nu(\dd z)\,\dd s
+\int_0^T\int_{\R_0}\kappa(s)v(s,z)\,\tilde N(\dd s,\dd z).
\]
By the triangle inequality and $(|a|+|b|)^p\le 2^{p-1}(|a|^p+|b|^p)$, it suffices to bound the $p$th moments of the two terms.
Since $\kappa(s)v(s,z)=\kappa(s)^2\exp(-\int_0^s \partial_x b(u,X_u)\,\dd u)\eta(z)$ and $\|\partial_x b\|_\infty<\infty$,
the compensator term is bounded by the deterministic constant
$\|\kappa\|_\infty^2 e^{\|\partial_x b\|_\infty T}T\int \eta(z)\,\nu(\dd z)$.
For the martingale term, set
\[
M_T:=\int_0^T\int \kappa(s)v(s,z)\,\tilde N(\dd s,\dd z).
\]
We treat the cases $p\in(1,2]$ and $p>2$ separately.
If $p\in(1,2]$, then Lyapunov's inequality and the It\^o isometry yield
\[
\E[|M_T|^p]\le \E[|M_T|^2]^{p/2}
=\E\Big[\int_0^T\!\!\int|\kappa(s)v(s,z)|^2\,\nu(\dd z)\,\dd s\Big]^{p/2}.
\]
Using the bound $|\kappa(s)v(s,z)|\le \|\kappa\|_\infty^2e^{\|\partial_x b\|_\infty T}\eta(z)$, we obtain
\[
\E[|M_T|^p]\le \Big(T\|\kappa\|_\infty^4e^{2\|\partial_x b\|_\infty T}\int \eta(z)^2\,\nu(\dd z)\Big)^{p/2}<\infty.
\]
If $p>2$, then the Burkholder--Davis--Gundy (BDG) inequality together with Kunita's first inequality for Poisson integrals (e.g.\ \cite[Theorem~4.4.23]{Applebaum2009}; we shall use these inequalities repeatedly in the sequel) gives
\[
\E[|M_T|^p]\le C_p\Big(\E\Big[\Big(\int_0^T\!\!\int|\kappa(s)v(s,z)|^2\,\nu(\dd z)\,\dd s\Big)^{p/2}\Big]
+\E\Big[\int_0^T\!\!\int|\kappa(s)v(s,z)|^p\,\nu(\dd z)\,\dd s\Big]\Big)<\infty.
\]
In both cases, finiteness follows from boundedness of $\kappa$ and the integrability of $\eta$ and $\eta^p$ under Assumption~\ref{ass:Levy}\,(L1).
Therefore $\E[\sup_{t\le T}|C_t|^p]=\E[C_T^p]<\infty$.
Hence $\sup_{n\in\mathbb{N}}\E[\sup_{t\le T}|D_\Theta X^{(n)}_t|^p]<\infty$,
with constants depending on $\kappa$ only via $\|\kappa\|_\infty$.

\medskip
\noindent\emph{Step 4: convergence $X^{(n)}\to X$.}
Subtracting \eqref{eq:SDE-jump-trunc} from \eqref{eq:SDE-jump}, the difference $\Delta^{(n)}:=X-X^{(n)}$ is driven only by the large-jump part $L-L^{(n)}$.
Write $L_t-L^{(n)}_t=\int_0^t\int_{|z|>n} z\,N(\dd s,\dd z)$, which is a compound Poisson process.
Let $\Delta^{(n)}_t:=X_t-X^{(n)}_t$.
Using the Lipschitz property of $b$ and Gronwall's lemma,
\[
\sup_{0\le t\le T}|\Delta^{(n)}_t|
\le e^{KT}\sup_{0\le t\le T}\Big|\int_0^t\kappa(s)\,\dd(L_s-L^{(n)}_s)\Big|
\le e^{KT}\|\kappa\|_\infty\int_0^T\int_{|z|>n}|z|\,N(\dd s,\dd z).
\]
Since $\nu(\{|z|>n\})<\infty$ for $n\ge1$, we may write
$\int_0^T\int_{|z|>n}|z|\,N(\dd s,\dd z)=\sum_{j=1}^{N_n}|Z_{n,j}|$
with $N_n\sim\mathrm{Poisson}(\lambda_n)$, $\lambda_n:=T\nu(\{|z|>n\})$, and i.i.d.\ jumps $Z_{n,j}$ with law $\nu(\dd z)\1_{\{|z|>n\}}/\nu(\{|z|>n\})$.
For $p>1$, conditioning on $N_n$ and using $(\sum_{j=1}^m a_j)^p\le m^{p-1}\sum_{j=1}^m a_j^p$ together with the independence of $N_n$ and $(Z_{n,j})_j$ and the standard Poisson moment bound
$\E[N_n^p]\le C_p(\lambda_n+\lambda_n^p)$, we obtain
\[
\E\Big[\Big(\int_0^T\int_{|z|>n}|z|\,N(\dd s,\dd z)\Big)^p\Big]
\le C_{p,T}\int_{|z|>n}|z|^p\,\nu(\dd z).
\]
Therefore,
\[
\E\Big[\sup_{0\le t\le T}|\Delta^{(n)}_t|^p\Big]\le C\,\|\kappa\|_\infty^p\int_{|z|>n}|z|^p\,\nu(\dd z)\xrightarrow[n\to\infty]{}0
\]
by Assumption~\ref{ass:Levy}\,(L2).

\medskip
\noindent\emph{Step 5: passage to the limit and identification of $D_\Theta X$.}
Let $Y$ be the unique adapted c\`adl\`ag solution of
\[
Y_t=\int_0^t \partial_x b(s,X_s)Y_s\,\dd s + C_t,\qquad Y_0=0.
\]
(Existence and uniqueness follow from a pathwise Gronwall argument;
moreover $\sup_{t\le T}|Y_t|\le e^{KT}\sup_{t\le T}|C_t|$, which belongs to $L^p$ by Step~3.)
Set $E^{(n)}_t:=D_\Theta X^{(n)}_t-Y_t$.
From \eqref{eq:DXjump-n} we obtain
\[
E^{(n)}_t=\int_0^t \partial_x b(s,X^{(n)}_s)E^{(n)}_s\,\dd s +\int_0^t\big(\partial_x b(s,X^{(n)}_s)-\partial_x b(s,X_s)\big)Y_s\,\dd s,
\]
and hence
\[
\sup_{0\le t\le T}|E^{(n)}_t|
\le e^{KT}\int_0^T|\partial_x b(s,X^{(n)}_s)-\partial_x b(s,X_s)|\,|Y_s|\,\dd s.
\]
By Step~4, $\E[\sup_{t\le T}|X^{(n)}_t-X_t|^p]\to 0$, so we can extract a subsequence $n_k$ such that
$\sup_{t\le T}|X^{(n_k)}_t-X_t|\to0$ a.s.
Since for each $s$ the map $x\mapsto \partial_x b(s,x)$ is continuous (and $\partial_x b$ is jointly measurable by the Carath\'eodory property) and $\partial_x b$ is bounded, the integrand converges to $0$ a.s.\ and is dominated by $2K\sup_{t\le T}|Y_t|\in L^p$.
Therefore, by dominated convergence,
\[
\E\Big[\sup_{0\le t\le T}|D_\Theta X^{(n_k)}_t-Y_t|^p\Big]\to0.
\]
Finally, for each fixed $t$ Step~2 gives $X^{(n_k)}_t\in W^{1,p}_\Theta$, so the pairs $(X^{(n_k)}_t, D_\Theta X^{(n_k)}_t)$ belong to the graph of $D_\Theta$.
Since $X^{(n_k)}_t\to X_t$ in $L^p$ and $D_\Theta X^{(n_k)}_t\to Y_t$ in $L^p$, these pairs converge in $L^p\times L^p$.
By the closability of $D_\Theta$ on $L^p$ (equivalently, closedness of the graph of its closure; see \cite[Section~3]{NakagawaSuzuki2024}),
we conclude that $X_t\in W^{1,p}_\Theta$ and $D_\Theta X_t=Y_t$.
Since the limit $Y_t$ is determined by $X$ alone (as the unique solution of the linear equation above), it is independent of the chosen subsequence.
Thus $D_\Theta X_t=Y_t$ for each $t\in[0,T]$; in particular, \eqref{eq:DXjump-linear} holds and $D_\Theta X$ admits a c\`adl\`ag version.
\end{proof}

\begin{lemma}[Expression for the directional derivative]\label{lem:DX-jump-expression}
Under Assumptions~\ref{ass:Levy} and~\ref{ass:jump}, with $\Theta=(0,v)$ as above,
the directional derivative satisfies
\begin{equation}\label{eq:DXjump}
D_\Theta X_t
= \exp\Big(\int_0^t \partial_x b(s,X_s)\,\dd s\Big)\,
\int_0^t\int_{\R_0}
\kappa(s)^2\exp\Big(-2\int_0^s \partial_x b(u,X_u)\,\dd u\Big)\eta(z)\,N(\dd s,\dd z).
\end{equation}
In particular, $D_\Theta X_t\ge 0$ for all $t\in[0,T]$.
\end{lemma}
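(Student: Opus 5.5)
The plan is to start from the linear equation \eqref{eq:DXjump-linear} of Lemma~\ref{lem:jump-differentiability}, namely $D_\Theta X_t=\int_0^t a_s D_\Theta X_s\,\dd s + C_t$. Here $a_s:=\partial_x b(s,X_s)$ and $C_t:=\int_0^t\int_{\R_0}\kappa(s)v(s,z)\,N(\dd s,\dd z)$, and we solve it by a pathwise variation-of-constants formula. Since $\sigma\equiv0$, the equation contains no stochastic integral against $\dd W$. It is therefore a linear integral equation driven by the c\`adl\`ag finite-variation process $C$, and it can be handled $\omega$ by $\omega$. Put $\Phi_t:=\exp(\int_0^t a_s\,\dd s)$, which is continuous, of finite variation, and satisfies $e^{-KT}\le\Phi_t\le e^{KT}$ with $K=\|\partial_x b\|_\infty$.

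The candidate is $Y_t:=\Phi_t\int_0^t\Phi_s^{-1}\,\dd C_s$. To check that it solves the equation, apply the integration-by-parts formula for finite-variation processes to the product $\Phi_t\cdot\int_0^t\Phi_s^{-1}\dd C_s$. Because $\Phi$ is continuous, there is no bracket term. We get $\dd Y_t=a_t\Phi_t\big(\int_0^t\Phi_s^{-1}\dd C_s\big)\dt+\dd C_t=a_tY_t\dt+\dd C_t$, with $Y_0=0$. The uniqueness asserted in Step~5 of the proof of Lemma~\ref{lem:jump-differentiability}, which follows from Gronwall's inequality applied to the difference of two solutions, then gives $D_\Theta X=Y$, up to the chosen c\`adl\`ag version.

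It remains to substitute $\kappa(s)v(s,z)=\kappa(s)^2\Phi_s^{-1}\eta(z)$. Then $\Phi_s^{-1}\,\dd C_s$ equals the integral against $N(\dd s,\dd z)$ of $\kappa(s)^2\Phi_s^{-2}\eta(z)$, which is exactly \eqref{eq:DXjump}. One point of care is that integrating the predictable, bounded integrand $\Phi_s^{-1}$ against $\dd C_s$ is the same as integrating $\Phi_s^{-1}\kappa(s)v(s,z)$ against $N$. This holds because $C$ is a nondecreasing pure-jump process (it is a Poisson integral with nonnegative integrand) and is integrable, since $\int\eta\,\dd\nu<\infty$. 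Hence $C$ has finite variation, and the Stieltjes integral is just the sum over its jumps.

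Nonnegativity is then immediate: $\Phi_t>0$, $\kappa^2\ge0$, $\eta\ge0$, and $N$ is a nonnegative measure. The only delicate step is making sure the Poisson integral in \eqref{eq:DXjump} is an honest pathwise (non-compensated) integral, and the integrability of $\eta$ against $\nu$ secures this. Everything else is routine.
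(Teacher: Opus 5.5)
Your proposal is correct and follows essentially the same variation-of-constants argument as the paper. The paper applies the product rule directly to $(J^b_t)^{-1}D_\Theta X_t$ (with $J^b=\Phi$) to obtain $\dd\big((J^b_t)^{-1}D_\Theta X_t\big)=(J^b_t)^{-1}\,\dd C_t$, whereas you verify the candidate $\Phi_t\int_0^t\Phi_s^{-1}\,\dd C_s$ and invoke uniqueness of the linear equation; the two routes are equivalent here, and your substitution $\Phi_s^{-1}\kappa(s)v(s,z)=\kappa(s)^2\Phi_s^{-2}\eta(z)$ and the nonnegativity conclusion match the paper exactly.
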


\begin{proof}
Fix $p\in(1,\beta)$.
By Lemma~\ref{lem:jump-differentiability}, $X_t\in W^{1,p}_\Theta$ for all $t\in[0,T]$ and $D_\Theta X$ satisfies the linear equation \eqref{eq:DXjump-linear}.
Set $J_t^b:=\exp(\int_0^t \partial_x b(s,X_s)\,\dd s)$.
Since $\partial_x b$ is bounded, $J^b$ is a strictly positive process of bounded variation.
Applying the product rule to $(J_t^b)^{-1}D_\Theta X_t$ and using \eqref{eq:DXjump-linear} gives $\dd\big((J_t^b)^{-1}D_\Theta X_t\big)=(J_t^b)^{-1}\,\dd C_t$,
where $C_t:=\int_0^t\int_{\R_0}\kappa(s)\,v(s,z)\,N(\dd s,\dd z)$.
Integrating and multiplying by $J_t^b$ yields
\[
D_\Theta X_t=\exp\Big(\int_0^t \partial_x b(s,X_s)\,\dd s\Big)\int_0^t\!\!\int_{\R_0}
\kappa(s)\exp\Big(-\int_0^s \partial_x b(u,X_u)\,\dd u\Big)v(s,z)\,N(\dd s,\dd z).
\]
Substituting our choice $v(s,z)=\kappa(s)\exp(-\int_0^s \partial_x b(u,X_u)\,\dd u)\eta(z)$ gives \eqref{eq:DXjump}.
Since $\eta\ge 0$ and $N$ is a nonnegative measure, the Poisson integral in \eqref{eq:DXjump} is nonnegative, and consequently $D_\Theta X_t\ge 0$.
\end{proof}

\subsection{A positivity lemma with time-dependent weight}

The next lemma is the main probabilistic ingredient of the pure-jump regime.
Even if $\kappa$ vanishes on subintervals, condition \eqref{eq:NDtime} ensures that each interval $(0,t]$ contains a subset of positive Lebesgue measure on which $\kappa\neq 0$.
Combined with infinite activity of $\nu$ on any neighborhood of zero and the positivity of $\eta$, this forces the weighted Poisson integral to accumulate strictly positive mass almost surely.

\begin{lemma}[Weighted Poisson integral is strictly positive]\label{lem:poisson-positive}
Assume that Assumption~\ref{ass:Levy} holds.
Let $\kappa:[0,T]\to\R$ be a bounded \emph{deterministic} measurable function satisfying \eqref{eq:NDtime}.
Let $\eta\ge 0$ be measurable such that $\nu(\{z:\eta(z)>0\})=\infty$.
Then for every $t\in(0,T]$,
\[
\int_0^t\int_{\R_0} \kappa(s)^2\,\eta(z)\,N(\dd s,\dd z) > 0
\qquad\text{a.s.}
\]
where the integral is understood as an extended nonnegative random variable (possibly $+\infty$).
\end{lemma}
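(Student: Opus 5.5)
The plan is to reduce strict positivity to the event that the Poisson random measure charges a set of infinite intensity.

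Fix $t\in(0,T]$ and set $S_t:=\{s\in[0,t]:\kappa(s)\neq0\}$ and $E:=\{z:\eta(z)>0\}$. By \eqref{eq:NDtime}, $\int_0^t\kappa(s)^2\,\dd s>0$, so the Lebesgue measure $|S_t|$ is positive, since otherwise $\kappa^2=0$ a.e.\ on $[0,t]$. The integrand $\kappa(s)^2\eta(z)$ is nonnegative and strictly positive exactly on $S_t\times E$. Hence the integral vanishes if and only if $N(S_t\times E)=0$. To see this, write the integral as a sum over the atoms of $N$ in $[0,t]\times\R_0$, which is legitimate for nonnegative integrands as an extended random variable. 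The sum is zero precisely when no atom lies in $S_t\times E$.

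It therefore suffices to show $\Pbb(N(S_t\times E)=0)=0$. Since $N$ is a Poisson random measure with intensity $\dd s\otimes\nu(\dd z)$, the count $N(S_t\times E)$ is Poisson distributed with parameter $|S_t|\,\nu(E)=\infty$, because $|S_t|>0$ and $\nu(E)=\infty$.

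The one point needing care is making "Poisson with infinite mean" rigorous, and this is the step I expect to require the most attention. I will approximate $E$ from inside by $E_k:=E\cap\{|z|>1/k\}$. Each $E_k$ has $\nu(E_k)\le\nu(\{|z|>1/k\})<\infty$ by (L1), and $\nu(E_k)\uparrow\nu(E)=\infty$ by monotone convergence. For each $k$, the count $N(S_t\times E_k)$ is a genuine Poisson variable with parameter $\lambda_k:=|S_t|\nu(E_k)<\infty$, so
\[
\Pbb\big(N(S_t\times E)=0\big)\le\Pbb\big(N(S_t\times E_k)=0\big)=e^{-\lambda_k}\xrightarrow[k\to\infty]{}0 .
\]
This gives $N(S_t\times E)\ge1$ a.s., and hence the integral is strictly positive a.s.

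Measurability of $S_t$ follows from measurability of $\kappa$. The proof never uses the specific form of $\eta$ beyond $\nu(E)=\infty$; in the application, that condition holds for \eqref{eq:eta-choice} by the infinite-activity part of (L3), since $E\supset\{0<|z|<1\}$.

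Finally, the statement is for each fixed $t$. Because the integral is nondecreasing in $t$, the exceptional null set can be made uniform: take $t=1/m\wedge T$ for $m\in\mathbb{N}$, a countable union of null sets. This yields positivity simultaneously for all $t\in(0,T]$, which is what is needed in \eqref{eq:DXjump} to conclude $D_\Theta X_t>0$ for all $t$.
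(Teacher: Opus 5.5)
Your proposal is correct and follows essentially the same argument as the paper: you restrict to $\{s\le t:\kappa(s)\neq0\}$, which has positive Lebesgue measure by \eqref{eq:NDtime}, times $\{\eta>0\}$, exhaust the latter from inside by sets of finite $\nu$-measure, and let the void probabilities $e^{-|B_t|\nu(A_n)}$ tend to zero. The only differences are cosmetic: you cut at $\{|z|>1/k\}$ rather than at $\{1/n\le|z|\le1\}$, and you include the countable-$t$ uniformity step, which the paper carries out in the proof of Theorem~\ref{thm:jump}.
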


\begin{proof}
Fix $t\in(0,T]$ and set
\[
B_t:=\{s\in(0,t]:\kappa(s)\neq 0\}.
\]
Since $\kappa$ is a deterministic measurable function, $B_t$ is a deterministic Lebesgue measurable subset of $(0,t]$
(this suffices for the Poisson parameter computation below, since the intensity measure of $N$ is $\mathrm{Leb}\otimes\nu$).
Note that $|B_t|>0$ by \eqref{eq:NDtime}: indeed, if $|B_t|=0$ then $\kappa=0$ a.e.\ on $(0,t]$, hence $\int_0^t \kappa(s)^2\,\dd s=0$, contradicting \eqref{eq:NDtime}.
Let $A:=\{z\in\R_0:\eta(z)>0\}$, so that $\nu(A)=\infty$ by assumption.

Since $\nu(\{|z|>1\})<\infty$ by Assumption~\ref{ass:Levy}\,(L1) and $\nu(A)=\infty$, we have
$\nu(A_\infty)=\infty$ for $A_\infty:=A\cap\{0<|z|\le 1\}$.
By $\sigma$-finiteness of $\nu$, choose an increasing sequence of Borel sets $A_n\uparrow A_\infty$
such that $\nu(A_n)<\infty$ for each $n$ and $\nu(A_n)\uparrow\infty$.
For instance, one may take $A_n:=A\cap\{1/n\le |z|\le 1\}$.

For each $n$, the random variable $N(B_t\times A_n)$ is Poisson with parameter $|B_t|\nu(A_n)$.
Therefore
\[
\Pbb\big(N(B_t\times A_n)=0\big)=\exp\big(-|B_t|\nu(A_n)\big)\xrightarrow[n\to\infty]{}0.
\]
Since $A_n\uparrow A_\infty$, monotone convergence of measures gives $N(B_t\times A_n)\uparrow N(B_t\times A_\infty)$,
and hence
\[
\Pbb\big(N(B_t\times A_\infty)=0\big)=\lim_{n\to\infty}\Pbb\big(N(B_t\times A_n)=0\big)=0.
\]
Since $A_\infty\subset A$, the event $\{N(B_t\times A)\ge 1\}\supset\{N(B_t\times A_\infty)\ge 1\}$ has probability one.
On this event there exists at least one jump point $(s,z)$ with $s\in B_t$ and $z\in A$.
For such a point, $\kappa(s)^2\eta(z)>0$, and hence the Poisson integral is strictly positive:
\[
\int_0^t\int_{\R_0} \kappa(s)^2\,\eta(z)\,N(\dd s,\dd z)>0.
\]
This proves the claim.
\end{proof}

\subsection{Completion of the proof}

We now combine the explicit representation of $D_\Theta X$, the positivity lemma, and the moment/Sobolev estimates to check the assumptions of Proposition~\ref{prop:unified}.

\begin{lemma}[Moment bounds needed for the criterion]\label{lem:jump-moment}
Fix $p\in(1,\beta)$ and assume that Assumptions~\ref{ass:Levy} and~\ref{ass:jump} hold.
Then $\Xstar_T\in L^p(\Omega)$.
Moreover, for $\Theta=(0,v)$ defined above, $D_\Theta X$ admits a right-continuous version and
\[
\E\Big[\sup_{0\le t\le T}|D_\Theta X_t|^p\Big]<\infty.
\]
\end{lemma}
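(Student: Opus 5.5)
The proof has two parts: the $L^p$ bound on $\Xstar_T$ via a Gronwall estimate for $X$, and the bound on $D_\Theta X$ read off from results already established.

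\emph{Moments of $\Xstar_T$.} Since $|\Xstar_T|\le\sup_{t\le T}|X_t|$, it suffices to show $\E[\sup_{t\le T}|X_t|^p]<\infty$. I will write
\[
X_t=x+\int_0^t b(s,X_s)\,\dd s+\int_0^t\kappa(s)\,\dd L_s .
\]
By (J1), $|b(s,y)|\le\|b(\cdot,0)\|_\infty+K|y|$ with $K:=\|\partial_x b\|_\infty$. Gronwall's inequality then gives the pathwise bound
\[
\sup_{t\le T}|X_t|\le e^{KT}\Big(|x|+T\|b(\cdot,0)\|_\infty+\sup_{t\le T}\Big|\int_0^t\kappa(s)\,\dd L_s\Big|\Big).
\]
Using \eqref{eq:LevyIto}, I split the stochastic integral into two pieces.

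\begin{enumerate}
\item \textbf{Small jumps.} The compensated part $\int_0^t\int_{|z|\le1}\kappa(s)z\,\tilde N(\dd s,\dd z)$ is an $L^2$ martingale. Doob's inequality and the isometry bound its supremum in $L^2$ by $\|\kappa\|_\infty^2T\int_{|z|\le1}z^2\,\nu(\dd z)$, which is finite by (L1). Since $p<2$ (because $p<\beta$ and $\beta\le2$ for a Lévy measure), Lyapunov's inequality gives the $L^p$ bound. If one prefers not to use $\beta\le2$, BDG/Kunita also works, since moments of all orders are finite for bounded jumps.
\item \textbf{Large jumps.} For the part driven by $\int_0^t\int_{|z|>1}z\,N$, I bound the supremum by $\|\kappa\|_\infty\int_0^T\int_{|z|>1}|z|\,N(\dd s,\dd z)$. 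The compound Poisson moment estimate of Step~4 in Lemma~\ref{lem:jump-differentiability}, applied with threshold $1$ instead of $n$, gives $p$-th moment at most $C_{p,T}\int_{|z|>1}|z|^p\,\nu(\dd z)$. This is finite by (L2), since the tail integral tends to zero and so is finite for large $n$, and the remaining region $1<|z|\le n$ has finite $\nu$-mass with bounded integrand.
\end{enumerate}

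This is the one step where the restriction $p<\beta$ is genuinely used.

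\emph{Derivative bounds.} These are essentially already contained in earlier results. By Lemma~\ref{lem:jump-differentiability}, $D_\Theta X$ has a c\`adl\`ag version, in particular a right-continuous one, namely the process $Y$ solving $Y_t=\int_0^t\partial_xb(s,X_s)Y_s\,\dd s+C_t$. Step~5 of that proof records $\sup_t|Y_t|\le e^{KT}\sup_t|C_t|=e^{KT}C_T$, and Step~3 shows $\E[C_T^p]<\infty$ for every $p>1$ via the compensator/martingale split and the integrability of $\eta,\eta^2,\eta^p$ under (L1). Alternatively, the explicit formula \eqref{eq:DXjump} gives directly
\[
0\le D_\Theta X_t\le e^{3KT}\|\kappa\|_\infty^2\int_0^T\int\eta(z)\,N(\dd s,\dd z),
\]
and the right-hand side is in $L^p$ by the same Poisson moment bounds.

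The main obstacle is only bookkeeping. The $p$-th moment of the large-jump part of $L$ must be handled with $p<\beta$ via (L2), which I do by reusing the compound Poisson argument. The bounds for $D_\Theta X$ need no new estimate, because the direction $v$ is supported in $\{|z|\le1\}$ and all moments of the relevant Poisson integral are finite.
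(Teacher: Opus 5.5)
Your proposal is correct and follows essentially the paper's route: for $\Xstar_T$ you use a pathwise Gronwall bound and the L\'evy--It\^o split, with (L2) needed only for the large-jump moments; for $D_\Theta X$ you dominate $0\le D_\Theta X_t\le C\int_0^T\int\eta(z)\,N(\dd s,\dd z)$ via \eqref{eq:DXjump} and then split into compensator and martingale parts, exactly as the paper does (your variant via Steps~3 and~5 of Lemma~\ref{lem:jump-differentiability} is equivalent). One correction: nothing forces $\beta\le 2$, since (L2) concerns large jumps and $\beta$ can exceed $2$ for light-tailed $\nu$ (the paper explicitly treats $p>2$), but your BDG/Kunita fallback, using $|z|^p\le |z|^2$ on $\{|z|\le 1\}$, already covers that case.
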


\begin{proof}
\emph{Moment bound for $\Xstar_T$.}
By the Lipschitz property of $b$ and Gronwall's lemma, there exists $C<\infty$ such that
\[
\sup_{0\le t\le T}|X_t|\le C\Big(1+|x|+\sup_{0\le u\le T}\Big|\int_0^u \kappa(s)\,\dd L_s\Big|\Big).
\]
Since $\kappa$ is bounded and $p\in(1,\beta)$, the L\'evy--It\^o decomposition and standard BDG estimates for Poisson integrals
(as in the proof of Lemma~\ref{lem:apriori}, with $\sigma\equiv0$) yield
$\E[\sup_{t\le T}|\int_0^t \kappa(s)\,\dd L_s|^p]<\infty$.
Therefore $\Xstar_T\in L^p(\Omega)$.

\medskip
\emph{Moment bound for $D_\Theta X$.}
By \eqref{eq:DXjump}, $D_\Theta X_t$ is the product of a pathwise continuous exponential and a Poisson integral
$\int_0^t\int f(s,z)\,N(\dd s,\dd z)$, which is c\`adl\`ag in $t$ (being a sum over jump points).
Hence $D_\Theta X$ admits a right-continuous (in fact c\`adl\`ag) version.
From \eqref{eq:DXjump} and $\|\partial_x b\|_\infty=:K<\infty$ we have, for all $t\le T$,
\[
0\le D_\Theta X_t
\le e^{KT}\int_0^t\int_{\R_0} \kappa(s)^2 e^{2Ks}\eta(z)\,N(\dd s,\dd z)
\le C\int_0^T\int_{\R_0}\eta(z)\,N(\dd s,\dd z),
\]
with $C=C(K,T,\|\kappa\|_\infty)$.
Since $\eta$ is bounded, compactly supported in $\{|z|\le 1\}$ and satisfies \eqref{eq:eta-choice} (in particular, $\eta(z)\lesssim |z|^4$ near $0$),
Assumption~\ref{ass:Levy}\,(L1) implies $\int \eta(z)\,\nu(\dd z)<\infty$ and $\int \eta(z)^p\,\nu(\dd z)<\infty$.
To bound its $p$th moment, write
\[
\int_0^T\int \eta(z)\,N(\dd s,\dd z)
= T\int \eta(z)\,\nu(\dd z)+\int_0^T\int \eta(z)\,\tilde N(\dd s,\dd z).
\]
Hence, by the triangle inequality and $(|a|+|b|)^p\le C_p(|a|^p+|b|^p)$,
\[
\E\Big[\Big(\int_0^T\int \eta(z)\,N(\dd s,\dd z)\Big)^p\Big]
\le C_p\Big(T^p\Big|\int \eta(z)\,\nu(\dd z)\Big|^p+\E\Big[\Big|\int_0^T\int \eta(z)\,\tilde N(\dd s,\dd z)\Big|^p\Big]\Big).
\]
The compensated integral
\[
M_T:=\int_0^T\int \eta(z)\,\tilde N(\dd s,\dd z)
\]
is a purely discontinuous martingale.
If $p\in(1,2]$, then Lyapunov's inequality and the It\^o isometry yield
\[
\E[|M_T|^p]\le \E[|M_T|^2]^{p/2}
=\Big(T\int \eta(z)^2\,\nu(\dd z)\Big)^{p/2}<\infty.
\]
If $p>2$, then the Burkholder--Davis--Gundy inequality together with Kunita's first inequality gives
\[
\E[|M_T|^p]\le C_p\Big( \Big(T\int \eta(z)^2\,\nu(\dd z)\Big)^{p/2}
+T\int \eta(z)^p\,\nu(\dd z)\Big)<\infty.
\]
This is finite since $\eta$ is bounded, supported in $\{|z|\le 1\}$ and $\eta(z)\lesssim |z|^4$ near~$0$.
Therefore,
\[
\E\Big[\Big(\int_0^T\int \eta(z)\,N(\dd s,\dd z)\Big)^p\Big]<\infty,
\]
and the claim follows.
\end{proof}

\begin{proof}[Proof of Theorem~\ref{thm:jump}]
Fix $p\in(1,\beta)$ and take the direction $\Theta=(0,v)$ defined above.
By Lemma~\ref{lem:jump-differentiability} we have $X_t\in W^{1,p}_\Theta$ for all $t\in[0,T]$ and $D_\Theta X$ admits a c\`adl\`ag version;
Lemma~\ref{lem:DX-jump-expression} yields the explicit representation \eqref{eq:DXjump}.
Lemma~\ref{lem:jump-moment} gives $\Xstar_T\in L^p(\Omega)$ and $\E[\sup_{t\le T}|D_\Theta X_t|^p]<\infty$.

\medskip
It remains to prove strict positivity of $D_\Theta X_t$ for all $t\in(0,T]$.
By monotonicity of the Poisson integral in time, it suffices to verify positivity on the countable dense subset $\mathbb{Q}_T:=\mathbb{Q}\cap(0,T]$.
Define
\[
\Omega_0:=\bigcap_{q\in\mathbb{Q}_T}\left\{\int_0^q\int \kappa(s)^2 \exp\Big(-2\int_0^s \partial_x b(u,X_u)\,\dd u\Big)\eta(z)\,N(\dd s,\dd z)>0\right\}.
\]
For each fixed $q\in\mathbb{Q}_T$, set $K:=\|\partial_x b\|_\infty$.
Since
\[
\exp\Big(-2\int_0^s \partial_x b(u,X_u)\,\dd u\Big)\ge \exp(-2Ks)\ge \exp(-2KT),\qquad s\le q,
\]
we have
\[
\int_0^q\int \kappa(s)^2 \exp\Big(-2\int_0^s \partial_x b(u,X_u)\,\dd u\Big)\eta(z)\,N(\dd s,\dd z)
\ge \exp(-2KT)\int_0^q\int \kappa(s)^2\eta(z)\,N(\dd s,\dd z).
\]
By Lemma~\ref{lem:poisson-positive}, the right-hand side is strictly positive a.s., and therefore so is the left-hand side.
Therefore each event in the intersection has probability one and $\Pbb(\Omega_0)=1$.

On $\Omega_0$, for any $t\in(0,T]$ choose $q\in\mathbb{Q}_T$ with $0<q\le t$.
Since the integrand is nonnegative, we obtain
\begin{align*}
    &\int_0^t\int \kappa(s)^2 \exp\Big(-2\int_0^s \partial_x b(u,X_u)\,\dd u\Big)\eta(z)\,N(\dd s,\dd z)\\
&\ge \int_0^q\int \kappa(s)^2 \exp\Big(-2\int_0^s \partial_x b(u,X_u)\,\dd u\Big)\eta(z)\,N(\dd s,\dd z)>0.
\end{align*}

Since the exponential prefactor in \eqref{eq:DXjump} is strictly positive, this implies $D_\Theta X_t>0$ for all $t\in(0,T]$ a.s.
Proposition~\ref{prop:unified} now applies and yields absolute continuity of $\law(\Xstar_T)$.
\end{proof}

\section{Proof of Theorem~\ref{thm:silent}}\label{sec:proof-silent}

We observe that if both noise coefficients vanish on an initial interval, then the dynamics is deterministic there.
The running maximum therefore starts from a deterministic level $m_0$, and the post-$\delta$ maximum contributes an atom at $m_0$.

\begin{proof}[Proof of Theorem~\ref{thm:silent}]
Fix $t\in[0,\delta]$.
Since $\sigma(t,x)=0$ for a.e.\ $t\in[0,\delta]$ and all $x$, the Brownian integral vanishes:
$\E\big[|\int_0^t \sigma(s,X_s)\,\dd W_s|^2\big]=\E[\int_0^t \sigma(s,X_s)^2\,\dd s]=0$, so $\int_0^t \sigma(s,X_s)\,\dd W_s=0$ a.s.
Likewise, by the It\^o isometry for compensated Poisson integrals,
\[
\E\Big[\Big|\int_0^t\!\!\int_{|z|\le 1}\kappa(s)z\,\tilde N(\dd s,\dd z)\Big|^2\Big]
=\int_0^t \kappa(s)^2\,\dd s\int_{|z|\le 1} z^2\,\nu(\dd z)=0,
\]
so $\int_0^t\int_{|z|\le 1}\kappa(s)z\,\tilde N(\dd s,\dd z)=0$ a.s.
For the large-jump part, set $B:=\{s\in[0,\delta]:\kappa(s)\neq 0\}$; then $|B|=0$, and $N(B\times\{|z|>1\})$ is Poisson with mean $|B|\nu(\{|z|>1\})=0$, so $N(B\times\{|z|>1\})=0$ a.s.
Since $\kappa(s)=0$ for $s\notin B$ and $N(B\times\{|z|>1\})=0$ a.s., we conclude that
$\int_0^t\int_{|z|>1}\kappa(s)z\,N(\dd s,\dd z)=0$ a.s.
Consequently, on $[0,\delta]$ the SDE reduces to the ODE $\dd X_t=b(t,X_t)\dt$.
By uniqueness of solutions to this ODE (guaranteed by the Lipschitz assumption on $b$), $X$ is deterministic on $[0,\delta]$.
In particular $\Xstar_\delta=m_0$ is deterministic.
The displayed identities follow.
If the post-$\delta$ dynamics satisfies Theorem~\ref{thm:elliptic} or Theorem~\ref{thm:jump},
then $\sup_{\delta\le s\le T}X_s$ has an absolutely continuous law, so the distribution of $\Xstar_T=\max(m_0,\sup_{\delta\le s\le T}X_s)$
is the mixture of an atom at $m_0$ and an a.c.\ component on $(m_0,\infty)$.
\end{proof}

\appendix
\section{Technical lemmas}\label{sec:appendix}

For completeness, we collect the estimates used repeatedly in the main proofs:
a priori moment bounds for solutions and Jacobians, admissibility of the directions, the weighted-Poisson positivity lemma,
and the differentiability statement for the truncated pure-jump approximation.

\subsection{Moment bounds for the solution and the Jacobian}

\begin{lemma}[A priori estimates]\label{lem:apriori}
Under Assumption~\ref{ass:elliptic} and for any $p\in(1,\beta)$, there exists $C_{p,T}<\infty$ such that
\[
\E\Big[\sup_{0\le t\le T}|X_t|^p\Big]\le C_{p,T}(1+|x|^p).
\]
\end{lemma}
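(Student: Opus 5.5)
We plan to prove the a priori estimate by the standard route: write $X_t$ as the initial value plus drift, Brownian and jump integrals, bound each supremum in $L^p$, and close with Gronwall. Since $p<\beta$ may be smaller than $2$ and the large jumps only have moments of order $<\beta$, we split $L$ by \eqref{eq:LevyIto} into a compensated small-jump part and a large-jump part and treat them separately.

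Concretely, we write
\[
X_t=x+\int_0^t b(s,X_s)\,\dd s+\int_0^t\sigma(s,X_s)\,\dd W_s+\int_0^t\!\!\int_{|z|\le1}\kappa(s)z\,\tilde N(\dd s,\dd z)+\int_0^t\!\!\int_{|z|>1}\kappa(s)z\,N(\dd s,\dd z).
\]
The drift satisfies $|b(s,y)|\le\|b(\cdot,0)\|_\infty+\|\partial_x b\|_\infty|y|$ by (E1). The Brownian term is handled by BDG and (E3): $\E[\sup_{t\le T}|\int_0^t\sigma\,\dd W|^p]\le C_p(\|\sigma\|_\infty^2T)^{p/2}$. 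Because $\sigma$ is bounded, this term needs no Gronwall feedback. The small-jump term is a square-integrable martingale. By Doob's inequality (or BDG) together with Lyapunov's inequality for $p\le2$, and Kunita's inequality for $p>2$ (here $\int_{|z|\le1}|z|^p\,\nu(\dd z)\le\int_{|z|\le1}|z|^2\,\nu(\dd z)$), its supremum has finite $p$th moment bounded by $C\|\kappa\|_\infty^p$, using (L1).

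The large-jump term is the main obstacle, since only moments of order $<\beta$ are available. First, $\nu(\{|z|>1\})<\infty$ by (L1), so this term is a compound Poisson integral. Its supremum is bounded by $\|\kappa\|_\infty\sum_{j\le N_1}|Z_j|$, with $N_1\sim\mathrm{Poisson}(T\nu(|z|>1))$. Then, exactly as in Step 4 of the proof of Lemma~\ref{lem:jump-differentiability}, conditioning on $N_1$ and using $(\sum a_j)^p\le m^{p-1}\sum a_j^p$ together with Poisson moments gives a bound $C_{p,T}\int_{|z|>1}|z|^p\,\nu(\dd z)$. This integral is finite for $p\in(1,\beta)$ by (L2): the limit in (L2) being zero forces $\int_{|z|>n}|z|^p\,\nu(\dd z)<\infty$ for some $n$, and on $1<|z|\le n$ we have $|z|^p\le n^p$ while $\nu(\{|z|>1\})<\infty$. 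Alternatively, one could use the compensated form with the first part of (L3), but the direct bound avoids needing $\int|z|\,\nu$ separately.

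Finally, set $\Phi(t):=\E[\sup_{s\le t}|X_s|^p]$. A localization with stopping times $\tau_R=\inf\{t:|X_t|>R\}$ ensures finiteness. Combining the bounds via $(\sum_{i=1}^5 a_i)^p\le5^{p-1}\sum a_i^p$ and Hölder for the drift integral ($|\int_0^t|X_s|\,\dd s|^p\le t^{p-1}\int_0^t|X_s|^p\,\dd s$), we obtain
\[
\Phi(t)\le C(1+|x|^p)+C\int_0^t\Phi(s)\,\dd s .
\]
Gronwall's inequality then gives $\Phi(T)\le C_{p,T}(1+|x|^p)$, uniformly in $R$, and Fatou's lemma removes the localization.

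Alternatively, a pathwise Gronwall argument applied to $\sup_{s\le t}|X_s|\le |x|+CT+K\int_0^t\sup_{u\le s}|X_u|\,\dd s+\sup_{u\le T}|M_u+\text{jumps}|$ gives directly $\sup_{t\le T}|X_t|\le e^{KT}(\ldots)$, and then taking $L^p$ norms avoids localization entirely. This is the version we would actually write.
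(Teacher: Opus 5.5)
Your proposal is correct and follows essentially the same route as the paper: BDG with bounded $\sigma$ for the Brownian term, a split of $L$ into a compensated small-jump martingale (Lyapunov for $p\le 2$, Kunita for $p>2$) and a compound-Poisson large-jump part controlled via (L2), then Gronwall. The only difference is that you close with a pathwise Gronwall inequality (as the paper itself does in Lemma~\ref{lem:jump-moment}), which avoids having to know in advance that $\E[\sup_{s\le t}|X_s|^p]<\infty$, a fact the paper's Gronwall step in expectation tacitly assumes.
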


\begin{proof}
Fix $p\in(1,\beta)$ and set $K_b:=\|\partial_x b\|_\infty$, $B_0:=\sup_{t\in[0,T]}|b(t,0)|$.
By (E1) we have the linear growth bound
\begin{equation}\label{eq:lin-growth-b}
|b(t,x)|\le B_0+K_b|x|,\qquad (t,x)\in[0,T]\times\R.
\end{equation}
Write the SDE in integral form and take the running supremum:
\[
\sup_{0\le t\le T}|X_t|
\le |x|+\int_0^T |b(s,X_s)|\,\dd s+\sup_{0\le t\le T}\Big|\int_0^t \sigma(s,X_s)\,\dd W_s\Big|
+\sup_{0\le t\le T}\Big|\int_0^t \kappa(s)\,\dd L_s\Big|.
\]
Using $(a_1+\cdots+a_m)^p\le C_p\sum_{i=1}^m a_i^p$ and \eqref{eq:lin-growth-b}, we obtain
\begin{align*}
\E\Big[\sup_{0\le t\le T}|X_t|^p\Big]
&\le C_{p}\Big(
|x|^p + \E\Big[\Big(\int_0^T (B_0+K_b|X_s|)\,\dd s\Big)^p\Big]
+ \E\Big[\sup_{0\le t\le T}\Big|\int_0^t \sigma(s,X_s)\,\dd W_s\Big|^p\Big]\\
&\hspace{4.2em}
+ \E\Big[\sup_{0\le t\le T}\Big|\int_0^t \kappa(s)\,\dd L_s\Big|^p\Big]\Big).
\end{align*}

\medskip
\noindent For the drift term, by H\"older's inequality and \eqref{eq:lin-growth-b},
\[
\begin{aligned}
\Big(\int_0^T (B_0+K_b|X_s|)\,\dd s\Big)^p
&\le 2^{p-1}(B_0T)^p + 2^{p-1}K_b^pT^{p-1}\int_0^T |X_s|^p\,\dd s\\
&\le C(1+|x|^p)+C\int_0^T \sup_{0\le u\le s}|X_u|^p\,\dd s.
\end{aligned}
\]
Taking expectations and using $\E[\sup_{u\le s}|X_u|^p]\le \E[\sup_{u\le T}|X_u|^p]$, we obtain
\begin{equation}\label{eq:drift-bound}
\E\Big[\Big(\int_0^T |b(s,X_s)|\,\dd s\Big)^p\Big]
\le C(1+|x|^p)+C\int_0^T \E\Big[\sup_{0\le u\le s}|X_u|^p\Big]\,\dd s.
\end{equation}

\medskip
\noindent For the Brownian term, by the BDG inequality and boundedness of $\sigma$,
\begin{equation}\label{eq:bdg-brown}
\E\Big[\sup_{0\le t\le T}\Big|\int_0^t \sigma(s,X_s)\,\dd W_s\Big|^p\Big]
\le C_p\,\E\Big[\Big(\int_0^T \sigma(s,X_s)^2\,\dd s\Big)^{p/2}\Big]
\le C_p\,(\|\sigma\|_\infty^2T)^{p/2}.
\end{equation}

\medskip
\noindent For the L\'evy term, since $L$ has L\'evy triplet $(0,0,\nu)$, we may write (L\'evy--It\^o decomposition)
\[
L_t=\int_0^t\int_{|z|\le 1} z\,\tilde N(\dd s,\dd z)+\int_0^t\int_{|z|>1} z\,N(\dd s,\dd z),
\]
and hence
\[
\int_0^t \kappa(s)\,\dd L_s
=\underbrace{\int_0^t\int_{|z|\le 1} \kappa(s)z\,\tilde N(\dd s,\dd z)}_{=:M_t}
+\underbrace{\int_0^t\int_{|z|>1} \kappa(s)z\,N(\dd s,\dd z)}_{=:I_t^{(>1)}}.
\]

\smallskip
\noindent For the small-jump part, the predictable quadratic variation of $M$ is
\[
\langle M\rangle_T=\int_0^T\int_{|z|\le 1} |\kappa(s)z|^2\,\nu(\dd z)\,\dd s
\le \|\kappa\|_\infty^2T\int_{|z|\le 1} z^2\,\nu(\dd z)<\infty
\]
by Assumption~\ref{ass:Levy}\,(L1).
By the BDG inequality with exponent $2$,
\[
\E\Big[\sup_{0\le t\le T}|M_t|^2\Big]\le C\,\E[\langle M\rangle_T]<\infty.
\]
If $p\in(1,2]$, Lyapunov's inequality yields
\begin{equation}\label{eq:smalljump-moment-p-le-2}
\E\Big[\sup_{0\le t\le T}|M_t|^p\Big]\le \E\Big[\sup_{0\le t\le T}|M_t|^2\Big]^{p/2}<\infty.
\end{equation}
If $p>2$, then the BDG inequality together with Kunita's first inequality for Poisson integrals (see, e.g., \cite[Theorem~4.4.23]{Applebaum2009}) gives
\begin{equation}\label{eq:bdg-smalljump}
\begin{aligned}
\E\Big[\sup_{0\le t\le T}|M_t|^p\Big]
&\le C_p\bigg[\Big(\int_0^T\int_{|z|\le 1} |\kappa(s)z|^2\,\nu(\dd z)\,\dd s\Big)^{p/2}
+\int_0^T\int_{|z|\le 1} |\kappa(s)z|^p\,\nu(\dd z)\,\dd s\bigg]\\
&\le C_p\bigg[\Big(\|\kappa\|_\infty^2T\int_{|z|\le 1} z^2\,\nu(\dd z)\Big)^{p/2}
+\|\kappa\|_\infty^p T\int_{|z|\le 1} |z|^p\,\nu(\dd z)\bigg]
<\infty,
\end{aligned}
\end{equation}
where the finiteness of the last term follows from Assumption~\ref{ass:Levy}\,(L1), since
$|z|^p\le |z|^2$ for $|z|\le 1$ when $p>2$.
Combining \eqref{eq:smalljump-moment-p-le-2} and \eqref{eq:bdg-smalljump} (depending on $p$) yields
\begin{equation}\label{eq:smalljump-moment}
\E\Big[\sup_{0\le t\le T}|M_t|^p\Big]<\infty.
\end{equation}

\smallskip
\noindent For the large-jump part, since $\nu(\{|z|>1\})<\infty$ (by $\int (1\wedge |z|^2)\,\nu(\dd z)<\infty$), the process
$I^{(>1)}$ is a compound Poisson process on $[0,T]$.
Using $\|\kappa\|_\infty<\infty$ and $\int_{|z|>1}|z|^p\,\nu(\dd z)<\infty$ (which follows from Assumption~\ref{ass:Levy}\,(L2)),
standard moment estimates for compound Poisson sums yield
\begin{equation}\label{eq:bigjump-moment}
\E\Big[\sup_{0\le t\le T}|I_t^{(>1)}|^p\Big]<\infty.
\end{equation}

\smallskip
\noindent Combining \eqref{eq:smalljump-moment} and \eqref{eq:bigjump-moment}, we obtain
\begin{equation}\label{eq:levy-bound}
\E\Big[\sup_{0\le t\le T}\Big|\int_0^t \kappa(s)\,\dd L_s\Big|^p\Big]<\infty.
\end{equation}

\medskip
Putting \eqref{eq:drift-bound}, \eqref{eq:bdg-brown} and \eqref{eq:levy-bound} together, we find
\[
\E\Big[\sup_{0\le t\le T}|X_t|^p\Big]
\le C(1+|x|^p)+C\int_0^T \E\Big[\sup_{0\le u\le s}|X_u|^p\Big]\,\dd s,
\]
for some constant $C=C(p,T,\|b(\cdot,0)\|_\infty,K_b,\|\sigma\|_\infty,\|\kappa\|_\infty,\nu)<\infty$.
Gronwall's lemma yields the desired bound.
\end{proof}

\subsection{Moment bounds without boundedness of the diffusion coefficient}

The boundedness assumption (E3) is convenient but not essential for the density argument, since our choice of
direction in \eqref{eq:hchoice} satisfies $|h(t)|=J_t^{-1}$ and does not involve $|\sigma(t,X_t)|$.
For completeness, we record a standard moment estimate under linear growth of $\sigma$.

\begin{lemma}[A priori estimates under linear growth]\label{lem:apriori-linear}
Assume that Assumption~\ref{ass:Levy} holds and that (E1), (E2) and (E4) of Assumption~\ref{ass:elliptic} hold
(without assuming (E3)).
Fix $p\in(1,\beta)$.
Then there exists a constant $C_{p,T}<\infty$ such that
\[
\E\Big[\sup_{0\le t\le T}|X_t|^p\Big]\le C_{p,T}(1+|x|^p).
\]
\end{lemma}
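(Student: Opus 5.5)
The argument will follow the proof of Lemma~\ref{lem:apriori} term by term. The only change is in the Brownian integral: there the bound $\|\sigma\|_\infty$ was used, and here we will instead use the linear growth $|\sigma(t,x)|\le S_0+K_\sigma|x|$ with $S_0:=\sup_t|\sigma(t,0)|$ and $K_\sigma:=\|\partial_x\sigma\|_\infty$, both finite by (E1). We write the integral form of \eqref{eq:SDE-general} and take the running supremum. The drift term is estimated exactly as in \eqref{eq:drift-bound}. The L\'evy term $\int_0^t\kappa(s)\,\dd L_s$ does not involve $X$ at all, so \eqref{eq:levy-bound} applies verbatim: it used only (L1), (L2), $\|\kappa\|_\infty<\infty$ and $p<\beta$.

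For the Brownian term we apply the BDG inequality:
\[
\E\Big[\sup_{t\le T}\Big|\int_0^t\sigma(s,X_s)\,\dd W_s\Big|^p\Big]\le C_p\,\E\Big[\Big(\int_0^T(S_0+K_\sigma|X_s|)^2\,\dd s\Big)^{p/2}\Big].
\]
If $p\ge2$, H\"older's inequality in time gives the bound $C(1+\int_0^T\E[\sup_{u\le s}|X_u|^p]\,\dd s)$. If $p\in(1,2)$, H\"older in time no longer yields a time integral of $p$th moments. In that case we instead use
\[
\Big(\int_0^T|X_s|^2\,\dd s\Big)^{p/2}\le \sup_{s\le T}|X_s|^{p/2}\Big(\int_0^T|X_s|\,\dd s\Big)^{p/2},
\]
followed by Young's inequality $ab\le \varepsilon a^2+C_\varepsilon b^2$. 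This bounds the term by $\varepsilon\,\E[\sup_{s\le T}|X_s|^p]+C_\varepsilon\E[(\int_0^T|X_s|\,\dd s)^p]$, and the last expectation is at most $C\int_0^T\E[\sup_{u\le s}|X_u|^p]\,\dd s$ by H\"older's inequality.

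The step I expect to be the main obstacle is justifying the absorption of $\varepsilon\,\E[\sup|X|^p]$ and the use of Gronwall's lemma, since both require this quantity to be finite a priori. We will handle this by localization. Set $\tau_R:=\inf\{t:|X_t|\ge R\}\wedge T$ and apply all the estimates above to the stopped process $X_{\cdot\wedge\tau_R}$. The jump at $\tau_R$ is controlled by the L\'evy term, which is already in $L^p$ independently of $R$, so $\E[\sup_{t\le T}|X_{t\wedge\tau_R}|^p]<\infty$ for each $R$. We then choose $\varepsilon$ small (depending only on $C_p$), absorb the $\varepsilon$-term into the left-hand side, and apply Gronwall's lemma to the function $s\mapsto\E[\sup_{u\le s\wedge\tau_R}|X_u|^p]$. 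This gives a bound $C_{p,T}(1+|x|^p)$ uniform in $R$, because all constants depend only on $p$, $T$, $\|b(\cdot,0)\|_\infty$, $K_b$, $S_0$, $K_\sigma$, $\|\kappa\|_\infty$ and $\nu$. Since $X$ is c\`adl\`ag, $\tau_R\uparrow T$ with $\tau_R=T$ eventually a.s., and Fatou's lemma as $R\to\infty$ yields the claim.
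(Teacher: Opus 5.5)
Your proof is correct, but it handles the Brownian term by a different mechanism than the paper. The paper uses the crude bound $\int_0^t\sigma(s,X_s)^2\,\dd s\le Ct\bigl(1+\sup_{s\le t}|X_s|^2\bigr)$, which gives a term $C\,t^{p/2}f(t)$ with $f(t)=\E[\sup_{s\le t}|X_s|^p]$. That term can only be absorbed on a short interval $[0,\delta]$ with $C\delta^{p/2}\le 1/2$. The bound is then propagated over a partition of $[0,T]$ by restarting the estimate at the random points $X_{k\delta}$, a ``conditional iteration'' that the paper only sketches.

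You instead use $\int_0^T|X_s|^2\,\dd s\le\sup_{s\le T}|X_s|\int_0^T|X_s|\,\dd s$ together with Young's inequality. This produces a term $\varepsilon f(t)$ whose coefficient is small regardless of the interval length, so you absorb it on all of $[0,T]$ at once and close with a single Gronwall argument.

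Your localization at $\tau_R$ also matters. The stopped process is bounded by $R\vee|x|$ plus an overshoot dominated by $2\sup_{t\le T}\bigl|\int_0^t\kappa(s)\,\dd L_s\bigr|\in L^p$. This gives the a priori finiteness of $f$ that any absorption step, and Gronwall itself, requires, and which the paper's proof leaves implicit.

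The paper's route is a little shorter to state. Yours is self-contained and avoids both the time-splitting and the unstated finiteness assumption. Incidentally, the Young-inequality step works for every $p>1$, so your case distinction at $p=2$ is not needed.
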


\begin{proof}
By (E1) there exist constants $B_0,K_b,\Sigma_0,K_\sigma<\infty$ such that
\[
|b(t,x)|\le B_0+K_b|x|,\qquad |\sigma(t,x)|\le \Sigma_0+K_\sigma|x|,\qquad (t,x)\in[0,T]\times\R.
\]
Let $f(t):=\E[\sup_{0\le s\le t}|X_s|^p]$.
Arguing as in the proof of Lemma~\ref{lem:apriori} (with the bounded coefficient $\|\sigma\|_\infty$ replaced by the linear-growth bound),
we obtain constants $C_1,C_2,C_3<\infty$ such that for all $t\in[0,T]$,
\begin{equation}\label{eq:f-ineq}
f(t)\le C_1(1+|x|^p)+C_2\int_0^t f(s)\,\dd s
+ C_3\,\E\Big[\Big(\int_0^t \sigma(s,X_s)^2\,\dd s\Big)^{p/2}\Big].
\end{equation}
By BDG and the growth bound on $\sigma$,
\[
\int_0^t \sigma(s,X_s)^2\,\dd s \le C\int_0^t (1+|X_s|^2)\,\dd s
\le C t\big(1+\sup_{0\le s\le t}|X_s|^2\big),
\]
and hence for some constant $C=C(p,\Sigma_0,K_\sigma)$,
\[
\E\Big[\Big(\int_0^t \sigma(s,X_s)^2\,\dd s\Big)^{p/2}\Big]
\le C\,t^{p/2}\,\E\Big[\big(1+\sup_{0\le s\le t}|X_s|^2\big)^{p/2}\Big]
\le C\,t^{p/2}\big(1+f(t)\big),
\]
where we used $(1+u^2)^{p/2}\le C_p(1+u^p)$ for $u\ge0$.
Plugging this into \eqref{eq:f-ineq} yields
\[
f(t)\le C(1+|x|^p)+C\int_0^t f(s)\,\dd s + C\,t^{p/2}f(t),\qquad t\in[0,T].
\]
Choose $\delta\in(0,T]$ such that $C\,\delta^{p/2}\le 1/2$.
Then for $t\in[0,\delta]$ we can absorb the last term and obtain
$f(t)\le C(1+|x|^p)+C\int_0^t f(s)\,\dd s$, and Gronwall's lemma implies $\sup_{t\in[0,\delta]}f(t)<\infty$.
To extend to $[0,T]$, partition $[0,T]$ into finitely many subintervals of length at most $\delta$
and apply the same estimate on each subinterval conditionally on the left endpoint (standard iteration argument).
This yields $f(T)\le C_{p,T}(1+|x|^p)$.
\end{proof}

\subsection{Derivation of the directional derivative in the elliptic regime}

\begin{lemma}[Directional derivative via an adapted shift]\label{lem:DXrep}
Assume that Assumption~\ref{ass:elliptic} holds and let $h\in \Hb_p$ for some $p>1$.
For $\varepsilon\in\R$ define $W_t^\varepsilon:=W_t+\varepsilon\int_0^t h(s)\,\dd s$ and let $X^\varepsilon$ be the unique strong
solution to
\begin{equation}\label{eq:SDE-eps}
X_t^\varepsilon=x+\int_0^t b(s,X_s^\varepsilon)\,\dd s+\int_0^t \sigma(s,X_s^\varepsilon)\,\dd W_s
+\varepsilon\int_0^t \sigma(s,X_s^\varepsilon)h(s)\,\dd s+\int_0^t \kappa(s)\,\dd L_s,\qquad t\in[0,T].
\end{equation}
Then, as $\varepsilon\to0$,
\[
\frac{X^\varepsilon-X}{\varepsilon}\longrightarrow Y
\quad\text{in }L^p\big(\Omega;C([0,T])\big),
\]
where $Y$ is the unique adapted solution to the linear SDE \eqref{eq:linearized}.
Moreover,
\[
Y_t=J_t\int_0^t J_s^{-1}\sigma(s,X_s)h(s)\,\dd s,\qquad t\in[0,T],
\]
where $J$ is the Jacobian defined by \eqref{eq:J}.
\end{lemma}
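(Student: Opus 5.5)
The plan is to study the difference quotient $Y^\varepsilon:=(X^\varepsilon-X)/\varepsilon$ directly. The point is that the jump term $\int_0^t\kappa(s)\,\dd L_s$ is additive and identical in \eqref{eq:SDE-eps} and \eqref{eq:SDE-general}, so it cancels exactly in the difference. Hence $Y^\varepsilon$ is a continuous process driven only by $\dd t$ and $\dd W$, and no jump estimates are needed. This is why convergence in $L^p(\Omega;C([0,T]))$ is the natural statement.

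Subtracting the two equations and applying the mean value theorem in $x$ (coefficients are $C^1$ in $x$), I will write
\[
Y^\varepsilon_t=\int_0^t a^\varepsilon_s Y^\varepsilon_s\,\dd s+\int_0^t c^\varepsilon_s Y^\varepsilon_s\,\dd W_s+\int_0^t \sigma(s,X^\varepsilon_s)h(s)\,\dd s .
\]
Here $a^\varepsilon_s=\int_0^1\partial_x b(s,X_s+\theta(X^\varepsilon_s-X_s))\,\dd\theta$, and $c^\varepsilon$ is defined analogously with $\partial_x\sigma$. Both are bounded by $\|\partial_x b\|_\infty$ and $\|\partial_x\sigma\|_\infty$ uniformly in $\varepsilon$.

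\emph{Step 1.} Show $\sup_t|X^\varepsilon_t-X_t|\to0$ in $L^p$, at rate $O(\varepsilon)$. Apply BDG and Gronwall to the difference. The forcing term $\varepsilon\int_0^t\sigma(s,X^\varepsilon_s)h(s)\,\dd s$ is bounded by $\varepsilon\|\sigma\|_\infty T^{1/2}(\int_0^T h^2)^{1/2}$ by (E3), which lies in $L^p$ since $h\in\Hb_p$. This gives uniform bounds $\sup_{|\varepsilon|\le1}\E[\sup_t|Y^\varepsilon_t|^p]<\infty$.

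\emph{Step 2.} Write $Y$ for the solution of \eqref{eq:linearized}; its existence and uniqueness follow from standard linear SDE theory with bounded coefficients and forcing $\sigma(s,X_s)h(s)\in L^p(\Omega;L^2(0,T))$. Set $E^\varepsilon:=Y^\varepsilon-Y$. Then $E^\varepsilon$ solves a linear SDE with coefficients $a^\varepsilon,c^\varepsilon$ and remainder
\[
R^\varepsilon_t=\int_0^t(a^\varepsilon_s-a_s)Y_s\,\dd s+\int_0^t(c^\varepsilon_s-c_s)Y_s\,\dd W_s+\int_0^t(\sigma(s,X^\varepsilon_s)-\sigma(s,X_s))h(s)\,\dd s .
\]
BDG and Gronwall reduce the claim to $\E[\sup_t|R^\varepsilon_t|^p]\to0$.

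The last term of $R^\varepsilon$ is at most $\|\partial_x\sigma\|_\infty\sup|X^\varepsilon-X|\int|h|$, which tends to $0$ by Step 1. The first two terms tend to $0$ by dominated convergence along any sequence $\varepsilon_k\to0$: extract an a.s.\ uniformly convergent subsequence from Step 1 and use continuity of $\partial_x b(s,\cdot)$ and $\partial_x\sigma(s,\cdot)$ together with boundedness. A subsequence-of-subsequence argument then gives convergence along the full limit $\varepsilon\to0$.

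\emph{Step 3.} Derive the formula. Apply It\^o's formula to $J_t^{-1}Y_t$; both processes are continuous semimartingales. One has $\dd(J^{-1})=J^{-1}\bigl((-a+c^2)\,\dd t-c\,\dd W\bigr)$, so the $\dd W$ terms cancel. The cross-variation term $-J^{-1}c^2Y\,\dd t$ also cancels against the drift contribution $J^{-1}c^2Y\,\dd t$. What remains is $\dd(J^{-1}Y)=J^{-1}\sigma h\,\dd t$, which yields the stated representation.

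The main obstacle is the integrability bookkeeping in Step 2. Since $h$ is only in $\Hb_p$ and the factor $Y$ carries that same integrability, the dominating functions must be placed in $L^p$ without extra room for H\"older's inequality. I would handle this by using the pathwise Gronwall/BDG form, in which $Y$ appears linearly multiplied by bounded differences. If needed, I would first localize with stopping times $\tau_R=\inf\{t:\int_0^t h^2\ge R\}$ and then let $R\to\infty$.
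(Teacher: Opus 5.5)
Your proposal is correct and follows essentially the same route as the paper: the mean-value linearization with $a^\varepsilon,c^\varepsilon$, the BDG/Gronwall reduction to $\E[\sup_t|R^\varepsilon_t|^p]\to0$, subsequence dominated convergence for the three remainder terms, and It\^o's formula for $J_t^{-1}Y_t$. The one point to tighten is the third term of $R^\varepsilon$. Your Lipschitz bound $\|\partial_x\sigma\|_\infty\sup|X^\varepsilon-X|\int_0^T|h(s)|\,\dd s$ yields only convergence in probability, because with $h\in\Hb_p$ that product is a priori only in $L^{p/2}$. The $L^p$ convergence comes instead from the bounded-difference domination $2\|\sigma\|_\infty\int_0^T|h(s)|\,\dd s\in L^p$ given by (E3), which is exactly what the paper uses. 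With that, the localization via $\tau_R$ is unnecessary.
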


\begin{proof}
Let $\Lambda:=\|\partial_x b\|_\infty\vee \|\partial_x\sigma\|_\infty$.
Since $\partial_x b$ and $\partial_x\sigma$ are bounded, $b(t,\cdot)$ and $\sigma(t,\cdot)$ are globally Lipschitz with constant $\Lambda$.

\medskip
\noindent\emph{Step 1: first-order estimate for $X^\varepsilon-X$.}
Set $\Delta_t^\varepsilon:=X_t^\varepsilon-X_t$. Subtracting \eqref{eq:SDE-general} from \eqref{eq:SDE-eps} gives
\[
\Delta_t^\varepsilon=\int_0^t\!\big(b(s,X_s^\varepsilon)-b(s,X_s)\big)\,\dd s
+\int_0^t\!\big(\sigma(s,X_s^\varepsilon)-\sigma(s,X_s)\big)\,\dd W_s
+\varepsilon\int_0^t \sigma(s,X_s^\varepsilon)h(s)\,\dd s.
\]
Note that the jump terms cancel in the difference (the same L\'evy process $L$ drives both equations), so $\Delta^\varepsilon$ has \emph{continuous}
paths.
Using BDG, Lipschitz continuity, and the bound $|\sigma|\le\|\sigma\|_\infty$, we obtain for $p>1$ a constant $C=C(p,T,\Lambda,\|\sigma\|_\infty)$ such that
\begin{equation}\label{eq:Delta-eps-bound}
\E\Big[\sup_{0\le t\le T}|\Delta_t^\varepsilon|^p\Big]
\le C\int_0^T \E\Big[\sup_{0\le u\le s}|\Delta_u^\varepsilon|^p\Big]\,\dd s
+C|\varepsilon|^p\,\E\Big[\Big(\int_0^T |h(s)|^2\,\dd s\Big)^{p/2}\Big].
\end{equation}
By Gronwall's lemma,
\begin{equation}
\E\Big[\sup_{0\le t\le T}|\Delta_t^\varepsilon|^p\Big]\le C|\varepsilon|^p\|h\|_{\Hb_p}^p,
\end{equation}
so in particular $\Delta^\varepsilon\to0$ in $L^p(\Omega;C([0,T]))$ as $\varepsilon\to0$.

\medskip
\noindent\emph{Step 2: equation for the difference quotient.}
For $\varepsilon\neq 0$ define $Y_t^\varepsilon:=\Delta_t^\varepsilon/\varepsilon$. Define the predictable coefficients
\[
a_s^\varepsilon:=\int_0^1 \partial_x b\big(s, X_s+\theta\Delta_s^\varepsilon\big)\,\dd\theta,
\qquad
c_s^\varepsilon:=\int_0^1 \partial_x \sigma\big(s, X_s+\theta\Delta_s^\varepsilon\big)\,\dd\theta.
\]
Then
\[
\frac{b(s,X_s^\varepsilon)-b(s,X_s)}{\varepsilon}=a_s^\varepsilon\,Y_s^\varepsilon,
\qquad
\frac{\sigma(s,X_s^\varepsilon)-\sigma(s,X_s)}{\varepsilon}=c_s^\varepsilon\,Y_s^\varepsilon,
\]
and hence $Y^\varepsilon$ satisfies the linear SDE
\begin{equation}\label{eq:Yeps}
Y_t^\varepsilon=\int_0^t a_s^\varepsilon Y_s^\varepsilon\,\dd s+\int_0^t c_s^\varepsilon Y_s^\varepsilon\,\dd W_s+\int_0^t \sigma(s,X_s^\varepsilon)h(s)\,\dd s.
\end{equation}
In particular, $|a_s^\varepsilon|\vee |c_s^\varepsilon|\le \Lambda$.

\medskip
\noindent\emph{Step 3: convergence $Y^\varepsilon\to Y$.}
Let $Y$ denote the unique solution to \eqref{eq:linearized} and set $Z^\varepsilon:=Y^\varepsilon-Y$.
Subtracting \eqref{eq:linearized} from \eqref{eq:Yeps} yields
\[
Z_t^\varepsilon=\int_0^t a_s^\varepsilon Z_s^\varepsilon\,\dd s+\int_0^t c_s^\varepsilon Z_s^\varepsilon\,\dd W_s+R_t^\varepsilon,
\]
where
\[
R_t^\varepsilon:=\int_0^t (a_s^\varepsilon-a_s)Y_s\,\dd s+\int_0^t (c_s^\varepsilon-c_s)Y_s\,\dd W_s
+\int_0^t \big(\sigma(s,X_s^\varepsilon)-\sigma(s,X_s)\big)h(s)\,\dd s,
\]
and we write $a_s:=\partial_x b(s,X_s)$, $c_s:=\partial_x\sigma(s,X_s)$.
Using BDG and Gronwall as in \eqref{eq:Delta-eps-bound}, we obtain
\begin{equation}\label{eq:Zeps-gron}
\E\Big[\sup_{0\le t\le T}|Z_t^\varepsilon|^p\Big]\le C\,\E\Big[\sup_{0\le t\le T}|R_t^\varepsilon|^p\Big],
\end{equation}
with $C=C(p,T,\Lambda)<\infty$.

We now show $\E[\sup_{t\le T}|R_t^\varepsilon|^p]\to0$ as $\varepsilon\to0$ by treating the three terms in $R^\varepsilon$ in order.
For the first term, by H\"older's inequality,
\[
\sup_{0\le t\le T}\Big|\int_0^t (a_s^\varepsilon-a_s)Y_s\,\dd s\Big|
\le \int_0^T |a_s^\varepsilon-a_s|\,|Y_s|\,\dd s,
\]
so that
\begin{equation}\label{eq:R1}
\E\Big[\sup_{0\le t\le T}\Big|\int_0^t (a_s^\varepsilon-a_s)Y_s\,\dd s\Big|^p\Big]
\le T^{p-1}\int_0^T \E\big[|a_s^\varepsilon-a_s|^p\,|Y_s|^p\big]\,\dd s.
\end{equation}
Since $\Delta^\varepsilon\to0$ in probability uniformly on $[0,T]$, continuity and boundedness of $\partial_x b$ imply $a_s^\varepsilon\to a_s$ in probability for every $s$.
Moreover $|a_s^\varepsilon-a_s|\le 2\Lambda$.
We also note that $Y$ has finite $p$th moments in the supremum norm. Indeed, by BDG and $|a_s|\vee|c_s|\le \Lambda$,
\[
\E\Big[\sup_{0\le t\le T}|Y_t|^p\Big]
\le C\int_0^T \E\Big[\sup_{0\le u\le s}|Y_u|^p\Big]\,\dd s
+ C\,\E\Big[\Big(\int_0^T |\sigma(s,X_s)|^2|h(s)|^2\,\dd s\Big)^{p/2}\Big],
\]
for a constant $C=C(p,T,\Lambda)$.
Since $|\sigma|\le \|\sigma\|_\infty$ and $h\in \Hb_p$, the second term is bounded by $C\,\|\sigma\|_\infty^p\|h\|_{\Hb_p}^p<\infty$, and Gronwall's lemma yields $\E[\sup_{t\le T}|Y_t|^p]<\infty$.
Using $|Y_s|^p\le \sup_{0\le u\le T}|Y_u|^p$ and $\E[\sup_{u\le T}|Y_u|^p]<\infty$,
Vitali's theorem (uniform integrability) implies that the integrand in \eqref{eq:R1} converges to $0$ in $L^1$ for each $s$,
and is dominated by $(2\Lambda)^p\E[\sup_{u\le T}|Y_u|^p]$.
Hence the right-hand side of \eqref{eq:R1} tends to $0$ as $\varepsilon\to0$.

For the second term, the BDG inequality yields
\[
\E\Big[\sup_{0\le t\le T}\Big|\int_0^t (c_s^\varepsilon-c_s)Y_s\,\dd W_s\Big|^p\Big]
\le C_p\,\E\Big[\Big(\int_0^T |(c_s^\varepsilon-c_s)Y_s|^2\,\dd s\Big)^{p/2}\Big].
\]
Similarly, continuity and boundedness of $\partial_x\sigma$ yield $c_s^\varepsilon\to c_s$ in probability for every $s$, and $|c_s^\varepsilon-c_s|\le 2\Lambda$.
In particular, $c^\varepsilon-c\to0$ in measure on $[0,T]\times\Omega$ (with respect to $\dd s\otimes\Pbb$), hence along any sequence $\varepsilon_n\downarrow0$ one can extract a subsequence (not relabeled) such that $c_s^{\varepsilon_n}\to c_s$ for a.e.\ $(s,\omega)$.
Since for each fixed $\omega$ the path $s\mapsto Y_s(\omega)$ is continuous (hence bounded) on $[0,T]$, dominated convergence yields $\int_0^T |(c_s^{\varepsilon_n}-c_s)Y_s|^2\,\dd s\to0$ a.s.\ along the subsequence, and therefore the same integral converges to $0$ in probability as $\varepsilon\to0$.
Consequently $\big(\int_0^T |(c_s^\varepsilon-c_s)Y_s|^2\,\dd s\big)^{p/2}\to0$ in probability.
Moreover,
\[
\Big(\int_0^T |(c_s^\varepsilon-c_s)Y_s|^2\,\dd s\Big)^{p/2}
\le (4\Lambda^2)^{p/2}\Big(\int_0^T |Y_s|^2\,\dd s\Big)^{p/2}
\le (4\Lambda^2T)^{p/2}\sup_{0\le s\le T}|Y_s|^p,
\]
which is integrable. By Vitali's theorem (uniform integrability), we obtain
\[
\E\Big[\sup_{0\le t\le T}\Big|\int_0^t (c_s^\varepsilon-c_s)Y_s\,\dd W_s\Big|^p\Big]\longrightarrow 0.
\]

For the third term, the Cauchy--Schwarz inequality in $s$ gives
\[
\sup_{0\le t\le T}\Big|\int_0^t \big(\sigma(s,X_s^\varepsilon)-\sigma(s,X_s)\big)h(s)\,\dd s\Big|^p
\le \Big(\int_0^T |\sigma(s,X_s^\varepsilon)-\sigma(s,X_s)|^2\,\dd s\Big)^{p/2}\Big(\int_0^T |h(s)|^2\,\dd s\Big)^{p/2}.
\]
Under (E3), $|\sigma(s,X_s^\varepsilon)-\sigma(s,X_s)|\le 2\|\sigma\|_\infty$, hence
\[
\Big(\int_0^T |\sigma(s,X_s^\varepsilon)-\sigma(s,X_s)|^2\,\dd s\Big)^{p/2}\Big(\int_0^T |h(s)|^2\,\dd s\Big)^{p/2}
\le (4\|\sigma\|_\infty^2 T)^{p/2}\Big(\int_0^T |h(s)|^2\,\dd s\Big)^{p/2},
\]
where the right-hand side belongs to $L^1(\Omega)$ since $h\in \Hb_p$.
Moreover, since $\Delta^\varepsilon\to0$ in probability uniformly on $[0,T]$, Lipschitz continuity of $\sigma$
implies $\int_0^T |\sigma(s,X_s^\varepsilon)-\sigma(s,X_s)|^2\,\dd s\le \Lambda^2T\sup_{s\le T}|\Delta_s^\varepsilon|^2\to0$ in probability.
Therefore the product converges to $0$ in probability while being dominated by the $L^1(\Omega)$ random variable $(4\|\sigma\|_\infty^2 T)^{p/2}(\int_0^T |h(s)|^2\,\dd s)^{p/2}$.
By Vitali's theorem (uniform integrability implied by the $L^1$ domination), we obtain
\[
\E\Big[\sup_{0\le t\le T}\Big|\int_0^t \big(\sigma(s,X_s^\varepsilon)-\sigma(s,X_s)\big)h(s)\,\dd s\Big|^p\Big]\longrightarrow 0.
\]

Combining the three terms, we obtain $\E[\sup_{t\le T}|R_t^\varepsilon|^p]\to0$.
Hence, by \eqref{eq:Zeps-gron}, $\E[\sup_{t\le T}|Z_t^\varepsilon|^p]\to0$.
Thus $Y^\varepsilon\to Y$ in $L^p(\Omega;C([0,T]))$.

\medskip
\noindent\emph{Step 4: variation of constants.}
Let $J$ solve \eqref{eq:J}. Applying It\^o's formula to $J_t^{-1}Y_t$ and using \eqref{eq:J} and \eqref{eq:linearized} gives
$\dd(J_t^{-1}Y_t)=J_t^{-1}\sigma(t,X_t)h(t)\,\dd t$ and hence
\[
Y_t=J_t\int_0^t J_s^{-1}\sigma(s,X_s)h(s)\,\dd s.
\]
The proof is complete.
\end{proof}

\begin{lemma}[Identification of $D_\Theta X$ in the Brownian direction]\label{lem:DX-identification}
Assume that Assumption~\ref{ass:Levy} holds and that (E1), (E2) and (E4) of Assumption~\ref{ass:elliptic} hold (boundedness of $\sigma$ is not needed).
Fix $\Theta=(h,0)$ with $h\in \Hb_{\infty-}$ and let $X$ be the solution of \eqref{eq:SDE-general}.
Fix $p\in(1,\beta)$. Then for every $t\in[0,T]$ we have $X_t\in W^{1,p}_\Theta$ and the process $D_\Theta X$ is the unique adapted solution to \eqref{eq:linearized}.
In particular, $D_\Theta X$ admits a continuous version.
Moreover, if \textup{(E3)} additionally holds, the limit process $Y$ in Lemma~\ref{lem:DXrep} exists, and $D_\Theta X\equiv Y$.
\end{lemma}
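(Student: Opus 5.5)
The plan is to identify $D_\Theta X$ through a closability argument along an approximating sequence in $W^{1,p}_\Theta$, mirroring Step~5 of Lemma~\ref{lem:jump-differentiability}. Boundedness of $\sigma$ enters Lemma~\ref{lem:DXrep} only through the third remainder term, so we first remove (E3) by truncation. For $m\ge1$, let $\sigma_m(t,x):=\sigma(t,\phi_m(x))$, where $\phi_m$ is a smooth, nondecreasing cut-off with $\phi_m(x)=x$ for $|x|\le m$, $|\phi_m|\le m+1$ and $0\le\phi_m'\le1$. Then $\sigma_m$ satisfies (E1)--(E4) with Lipschitz constant at most $\|\partial_x\sigma\|_\infty$, uniformly in $m$. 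Let $X^m$ be the solution with $\sigma_m$ in place of $\sigma$.

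\textbf{Step 1: the bounded case.} Assume (E3). By Lemma~\ref{lem:DXrep}, the Cameron--Martin-type shift $W^\varepsilon=W+\varepsilon\int_0^\cdot h$ produces $X^\varepsilon$ with $(X^\varepsilon-X)/\varepsilon\to Y$ in $L^p(\Omega;C([0,T]))$. Following \cite[Definition~3.1]{NakagawaSuzuki2024}, $D_\Theta$ on cylinder functionals is exactly the $\varepsilon$-derivative along this perturbation, with the jump sizes unchanged since $v=0$. I would show $X_t\in W^{1,p}_\Theta$ with $D_\Theta X_t=Y_t$ by the same route as in the truncated jump case: \cite[Lemma~5.1]{NakagawaSuzuki2024} (stated for arbitrary $\Theta\in\Hb_{\infty-}\times\Vb_{\infty-}$) gives membership in $W^{1,p}_\Theta$ for the large-jump truncations $X^{(n)}$, with $D_\Theta X^{(n)}$ solving \eqref{eq:linearized} driven by $X^{(n)}$. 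Its proof only uses Lipschitz coefficients and moment bounds, so the state-dependent $\sigma$ is harmless. Convergence $X^{(n)}\to X$ in $L^p$ follows as in Step~4 of Lemma~\ref{lem:jump-differentiability}, now with a BDG term absorbed by Gronwall. Convergence $D_\Theta X^{(n)}\to Y$ follows from the stability argument of Step~3 of Lemma~\ref{lem:DXrep}, with $X^\varepsilon$ replaced by $X^{(n)}$. Closability of $D_\Theta$ then gives $X_t\in W^{1,p}_\Theta$ and $D_\Theta X_t=Y_t$. Uniqueness of the solution to the linear SDE \eqref{eq:linearized}, which holds because its coefficients are bounded by $\Lambda$ (Gronwall plus BDG), makes the limit independent of the subsequence. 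This proves the final sentence of the lemma, $D_\Theta X\equiv Y$.

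\textbf{Step 2: removing (E3).} Apply Step~1 to $X^m$ to get $D_\Theta X^m=Y^m$, where $Y^m$ solves \eqref{eq:linearized} with $\sigma_m$ and $X^m$. By Lemma~\ref{lem:apriori-linear} the moments of $\sup_t|X_t|$ are finite. On the event $\{\sup_t|X_t|\le m\}$ pathwise uniqueness gives $X^m=X$, and hence $Y^m=Y$, because the linear equations then have identical coefficients up to the stopping time $\tau_m=\inf\{t:|X_t|>m\}$. Therefore $X^m_t\to X_t$ and $Y^m_t\to Y_t$ in probability. To upgrade to $L^p$, I would bound $\E\sup_t|Y^m_t|^{q}$ uniformly in $m$ for some $q>p$. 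Via variation of constants, $|Y^m_t|\le J^m_t\int_0^t (J^m_s)^{-1}|\sigma_m(s,X^m_s)||h(s)|\,\dd s$. Lemma~\ref{lem:J} controls the Jacobian moments uniformly, since they depend only on $\Lambda$. Linear growth and Lemma~\ref{lem:apriori-linear} control $\sup|\sigma_m(s,X^m_s)|$ in $L^{q'}$ for any $q'<\beta$ uniformly in $m$, and $h\in\Hb_{\infty-}$. A three-factor H\"older inequality, as in Corollary~\ref{cor:elliptic-linear}, then gives the uniform bound. Uniform integrability yields $L^p$ convergence, and closability once more gives $X_t\in W^{1,p}_\Theta$ with $D_\Theta X_t=Y_t$. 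Since $Y$ is a continuous semimartingale, $D_\Theta X$ has a continuous version.

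\textbf{Main obstacle.} I expect the hardest point to be the moment bookkeeping in Step~2. Only $p<\beta$ moments of $X$ are available, so the H\"older exponents must be chosen with $q'\in(p,\beta)$, and all other factors must have moments of every order, which Lemma~\ref{lem:J} and $h\in\Hb_{\infty-}$ supply. A secondary concern is whether the Lemma~5.1 input genuinely allows adapted random $h$ and a state-dependent diffusion coefficient. If it does not, I would fall back on the direct difference-quotient limit of Lemma~\ref{lem:DXrep} and approximate $X_t$ by smooth cylinder functionals via Euler schemes.
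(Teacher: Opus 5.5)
Your Step~2 localization is sound. The cut-off $\sigma_m$ keeps (E1), (E2) and the linear-growth constants uniform in $m$, and you have $X^m=X$ and $Y^m=Y$ on $\{\sup_t|X_t|\le m\}$. The three-factor H\"older bound with $q'\in(p,\beta)$ then gives the uniform integrability needed for closability.

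\textbf{The gap is in Step~1, where the whole content of the lemma lies.} You take $X^{(n)}_t\in W^{1,p}_\Theta$, with $D_\Theta X^{(n)}$ solving \eqref{eq:linearized}, from \cite[Lemma~5.1]{NakagawaSuzuki2024}. That lemma concerns the additive model \eqref{eq:NSmodel} with a \emph{constant} Brownian coefficient $\sigma_1$, where $\sigma_1 W_t$ has the trivial derivative $\sigma_1\int_0^t h(s)\,ds$. With a state-dependent $\sigma(t,x)$ and a random adapted $h$, you need two further facts:
\begin{itemize}
\item that $\int_0^t\sigma(s,X_s)\,dW_s$ belongs to $W^{1,p}_\Theta$;
\item that $D_\Theta$ acts on it by $\int_0^t\partial_x\sigma(s,X_s)D_\Theta X_s\,dW_s+\int_0^t\sigma(s,X_s)h(s)\,ds$.
\end{itemize}
This commutation rule is not a matter of ``Lipschitz coefficients and moment bounds''. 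Your proposal never supplies it; the Euler-scheme fallback is only named. The large-jump truncation does not help either: in a Brownian direction the additive jump term is never differentiated, so the difficulty is entirely on the Wiener side.

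\textbf{How the paper closes this step.} It supplies exactly this ingredient and thereby avoids both of your truncations.
\begin{itemize}
\item For a cylinder functional $F$ and $\Theta=(h,0)$ one has $D_\Theta F=\int_0^T D^W_sF\,h(s)\,ds$. By H\"older with $q\in(p,\beta)$ and $h\in\Hb_{\infty-}$, this identity extends from $\mathcal S$ to the Wiener--Sobolev space $\mathbb{D}^{1,q}_W$, so any $F\in\mathbb{D}^{1,q}_W$ lies in $W^{1,p}_\Theta$.
\item Classical Wiener--Malliavin differentiability of the SDE gives $X_t\in\mathbb{D}^{1,q}_W$, because the additive jump term is independent of $W$. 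It also gives $D^W_rX_t=J_tJ_r^{-1}\sigma(r,X_r)\1_{\{r\le t\}}$, whose $q$th moments for $q<\beta$ are controlled by Lemma~\ref{lem:apriori-linear} without (E3).
\item Integrating against $h$ yields $D_\Theta X_t=J_t\int_0^tJ_r^{-1}\sigma(r,X_r)h(r)\,dr$, which solves \eqref{eq:linearized}.
\item The (E3) clause then follows from Lemma~\ref{lem:DXrep} together with uniqueness for \eqref{eq:linearized}.
\end{itemize}
If you replace your Step~1 by this argument, your Step~2 becomes unnecessary.
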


\begin{proof}
We use the Wiener--Malliavin derivative with respect to the Brownian motion $W$ and compare it with the directional derivative $D_\Theta$
of \cite{NakagawaSuzuki2024,SongXie2018} in the purely Brownian direction $\Theta=(h,0)$.

\medskip
\noindent\emph{Step 1: the Brownian directional derivative on cylinder functionals.}
Let $C_p^\infty(\R^m)$ denote the class of $C^\infty$ functions whose derivatives have at most polynomial growth.
Following \cite[Section~2.2]{SongXie2018} (see also \cite[Definition~3.1]{NakagawaSuzuki2024}),
let $\mathcal{S}$ be the set of Wiener--Poisson cylinder functionals of the form
\[
F=f\big(W(\varphi_1),\dots,W(\varphi_{m_1}),\,N(g_1),\dots,N(g_{m_2})\big),
\]
where $f\in C_p^\infty(\R^{m_1+m_2})$, the functions $\varphi_i:[0,T]\to\R$ are bounded deterministic with $\varphi_i\in L^2(0,T)$,
and each $g_j:[0,T]\times\R_0\to\R$ is bounded deterministic, $C^1$ in $z$, with compact support in $z$ (so that $\supp g_j(\cdot,\cdot)\subset[0,T]\times K_j$ for some compact $K_j\subset\R_0$, hence $\nu(K_j)<\infty$; in particular $N(g_j)=\int_0^T\int_{\R_0}g_j(s,z)\,N(\dd s,\dd z)$ is well-defined even when $\nu(\R_0)=\infty$).
For $F\in\mathcal{S}$ we define the Wiener derivative $D^W F\in L^2(0,T)$ by
\[
D_s^W F=\sum_{i=1}^{m_1}\partial_i f(\cdots)\,\varphi_i(s),\qquad s\in[0,T].
\]
In the directional calculus of \cite{NakagawaSuzuki2024,SongXie2018}, for $\Theta=(h,0)$ the derivative of $F\in\mathcal{S}$ is
\[
D_\Theta F=\sum_{i=1}^{m_1}\partial_i f(\cdots)\int_0^T h(s)\varphi_i(s)\,\dd s
=\int_0^T D_s^W F\,h(s)\,\dd s.
\]
Note that $h$ may be random and adapted, so $D_\Theta F$ is itself a random variable even for $F\in\mathcal{S}$;
the identity holds pathwise for the Wiener component since the $\varphi_i$ are deterministic.

Fix $p\in(1,\beta)$ and choose $q$ with $p<q<\beta$.
Let $\mathbb{D}_W^{1,q}$ be the completion of $\mathcal{S}$ under the norm
$\|F\|_{L^q}+\|D^W F\|_{L^q(\Omega;L^2(0,T))}$.
For $F\in\mathbb{D}_W^{1,q}$ we set
\[
\mathcal{D}_\Theta F:=\int_0^T D_s^W F\,h(s)\,\dd s.
\]
By H\"older's inequality in $\omega$ and the Cauchy--Schwarz inequality in time,
\begin{equation}\label{eq:pairing-Lp}
\E[|\mathcal{D}_\Theta F|^p]
\le \E\Big[\Big(\int_0^T |D_s^W F|^2\,\dd s\Big)^{q/2}\Big]^{p/q}
\E\Big[\Big(\int_0^T |h(s)|^2\,\dd s\Big)^{\frac{pq}{2(q-p)}}\Big]^{(q-p)/q},
\end{equation}
so $\mathcal{D}_\Theta F\in L^p$ since $h\in \Hb_{\infty-}$.
Indeed, because $h\in \Hb_r$ for every $r\ge 1$, choosing $r=\frac{pq}{q-p}$ makes the second expectation in \eqref{eq:pairing-Lp} finite.
Moreover, if $F_n\to F$ in $\mathbb{D}_W^{1,q}$ with $F_n\in\mathcal{S}$, then \eqref{eq:pairing-Lp} applied to $F_n-F$ shows that
$\mathcal{D}_\Theta F_n\to\mathcal{D}_\Theta F$ in $L^p$.
Therefore $F\in W^{1,p}_\Theta$ and $D_\Theta F=\mathcal{D}_\Theta F$.

\medskip
\noindent\emph{Step 2: Wiener differentiability of the SDE solution.}
By classical Malliavin differentiability results for Brownian SDEs with $C^1$ coefficients with bounded spatial derivatives (see, e.g., \cite{IshikawaKunita2006,Nualart2006}) and noting that the additive jump term in \eqref{eq:SDE-general} is independent of $W$ and thus acts as an adapted inhomogeneity, the solution $X$ of \eqref{eq:SDE-general} belongs to $\mathbb{D}_W^{1,q}$ for every $q\in(1,\beta)$.
Its Wiener derivative $D_r^W X_t$ satisfies, for $0\le r\le t\le T$,
\begin{equation}
D_r^W X_t=\sigma(r,X_r)+\int_r^t \partial_x b(s,X_s)\,D_r^W X_s\,\dd s+\int_r^t \partial_x\sigma(s,X_s)\,D_r^W X_s\,\dd W_s.
\end{equation}
(When (E3) is not assumed, the initial condition $D_r^W X_r=\sigma(r,X_r)$ has $q$th moments controlled by Lemma~\ref{lem:apriori-linear} only for $q<\beta$, whence the restriction $q\in(1,\beta)$.)
Moreover, with the Jacobian $J$ from Lemma~\ref{lem:J} we have the explicit representation
\begin{equation}\label{eq:DXW-Jacobian}
D_r^W X_t=J_t\,J_r^{-1}\,\sigma(r,X_r)\,\1_{\{r\le t\}}.
\end{equation}
Indeed, applying It\^o's formula to $J_t^{-1}D_r^W X_t$ on $[r,T]$ and using $D_r^W X_r=\sigma(r,X_r)$ yields \eqref{eq:DXW-Jacobian}.

\medskip
\noindent\emph{Step 3: identification of $D_\Theta X$ and the linear SDE.}
Fix $p\in(1,\beta)$ and choose $q\in(p,\beta)$.
Since $X_t\in \mathbb{D}_W^{1,q}$ by Step~2, Step~1 yields $X_t\in W^{1,p}_\Theta$ and
\[
D_\Theta X_t=\int_0^T D_r^W X_t\,h(r)\,\dd r=\int_0^t D_r^W X_t\,h(r)\,\dd r.
\]
Using \eqref{eq:DXW-Jacobian} and writing $G_t:=\int_0^t J_r^{-1}\sigma(r,X_r)h(r)\,\dd r$, we obtain $D_\Theta X_t=J_t\,G_t$.
Since $\dd G_t=J_t^{-1}\sigma(t,X_t)h(t)\,\dd t$, It\^o's formula and the SDE for $J$ show that $D_\Theta X$ satisfies \eqref{eq:linearized}.
The uniqueness of the adapted solution to \eqref{eq:linearized} follows from standard BDG/Gronwall estimates, and continuity is immediate.

\medskip
\noindent\emph{Step 4: identification with the shift derivative (bounded $\sigma$).}
If (E3) holds, Lemma~\ref{lem:DXrep} yields an $L^p(\Omega;C([0,T]))$-limit $Y$ of the difference quotient $(X^\varepsilon-X)/\varepsilon$.
By Lemma~\ref{lem:DXrep}, this $Y$ is an adapted solution to \eqref{eq:linearized}.
By uniqueness, $D_\Theta X\equiv Y$.
\end{proof}
\section*{Statements and Declarations}
\textbf{Funding} This work was supported by JSPS KAKENHI Grant Number 23K12507.

\noindent\textbf{Competing interests} The authors have no competing interests to declare that are relevant to the content of this article.

\noindent\textbf{Data availability} Data sharing is not applicable to this article, as no datasets were generated or analyzed during the current study.

\bibliographystyle{spmpsci}
\bibliography{references}

\end{document}